\definecolor{verylightgrey}{gray}{.8}
\definecolor{bordergrey}{gray}{.8}
\definecolor{lightgrey}{gray}{.1}
\newlength{\pinch}
\newlength{\mim}
\newcommand{\g}{6\mim}
\newcommand{\slice}[2]{%
\POS ="point";p+"c"="bot" **\dir{},?(.5)+/u4.5\pinch/="ccp",
"bot" **[#1]\crv{"ccp"},
p+"d"="top" **\dir{},?(.5)+/d9\pinch/="dcp",
"top";"bot"**\dir{},?(.5)*[#1]\xycircle<1.5\mim,\g>{}, 
"point";"top" **[#2]\crv{"dcp"}
}
\newcommand{\blownupslice}[2]{%
\POS ="lower";p+"c"="bot" **\dir{},?(.5)+/u4.5\pinch/="ccp",
"bot" **[#1]\crv{"ccp"},
p+<0\mim,30\mim>="upper" **[#2]\dir{-},
"lower"-<0\mim,\g>;"lower"**[#2]\dir{--},
"upper";"upper"+<0\mim,\g>**[#2]\dir{--},
"upper";p+"d"="top" **\dir{},?(.5)+/d9\pinch/="dcp",
"top" **[#2]\crv{"dcp"},
"top";"bot"**[#1]\dir{-},
"bot"-<0\mim,\g>;"bot"**[#1]\dir{--},
"top";"top"+<0\mim,\g>**[#2]\dir{--}
}
\DeclareSymbolFont{pssymbols}     {OMS}{ztmcm}{m}{n}
\DeclareSymbolFontAlphabet{\mathpsscr}   {pssymbols}
\theoremstyle{plain}%
\newtheorem{thm}{Theorem}[section]
\newtheorem{prop}[thm]{Proposition}
\newtheorem{lem}[thm]{Lemma}
\newtheorem{cor}[thm]{Corollary}
\theoremstyle{definition}%
\newtheorem{defi}[thm]{Definition}
\theoremstyle{remark}%
\newtheorem{rem}[thm]{Remark}
\newcommand{\oX}{\overline{X}}
\newcommand{\osp}{\oX}
\newcommand{\ga}{\Gamma}
\newcommand{\olsp}{\ga\backslash\osp}
\newcommand{\olX}{\ga\backslash\oX}
\newcommand{\G}{\mathbf G}
\renewcommand{\P}{{\mathbf P}}
\newcommand{\n}{\mathbf N}
\DeclareMathOperator{\SL}{\mathbf S\mathbf L}
\DeclareMathOperator{\GL}{\mathbf G\mathbf L}
\DeclareMathOperator{\CAT}{CAT}
\newcommand{\rank}{\text{-rank}\:}
\DeclareMathOperator{\Isom}{Isom}
\DeclareMathOperator{\Hom}{Hom}
\DeclareMathOperator{\Aff}{Aff}
\newcommand{\aff}{_{\mathrm{af}}}
\newcommand{\R}{{\mathbb R}}
\newcommand{\Q}{{\mathbb Q}}
\newcommand{\C}{{\mathbb C}}
\newcommand{\Z}{{\mathbb Z}}
\newcommand{\N}{{\mathbf N}}   
\newcommand{\Split}{{\mathbf S}}  
\newcommand{\Cent}{{\mathbf Z}}   
\newcommand{\La}{{\mathfrak a}}
\newcommand{\Lb}{{\mathfrak b}}
\newcommand{\OO}{{\mathcal O}}
\renewcommand{\max}{{\textup{max}}}
\begin{document}

\title[Fundamental Group]{The
  Fundamental Group of Reductive Borel-Serre and Satake Compactifications}
\author{Lizhen Ji}
\address{Department of Mathematics\\University of Michigan\\Ann Arbor, MI
  48109}
\email{lji@umich.edu}
\author{V. Kumar Murty}
\address{Department of Mathematics\\University of Toronto\\40 St.\ George
  Street\\Toronto, Ontario, M5S 3G3\\CANADA}
\email{murty@math.toronto.edu}
\author{Leslie Saper}
\address{Department of Mathematics\\Duke University\\Durham, NC 27708}
\email{saper@math.duke.edu}
\author{John Scherk}
\address{Department of Computer and Mathematical Sciences\\University of
  Toronto Scarborough\\1265 Military Trail
\\Toronto, Ontario, M1C 1A4\\CANADA}
\email{scherk@math.toronto.edu}
\date{March 6, 2014}
\thanks{Research partially supported by NSF and NSERC grants}
\subjclass[2010]{Primary 20F34, 22E40, 22F30; Secondary 14M27, 20G30}

\begin{abstract}
Let $\G$ be an almost simple, simply connected algebraic group defined over
a number field $k$, and let $S$ be a finite set of places of $k$ including
all infinite places.  Let $X$ be the product over $v\in S$ of the symmetric
spaces associated to $\G(k_v)$, when $v$ is an infinite place, and the
Bruhat-Tits buildings associated to $\G(k_v)$, when $v$ is a finite place.
The main result of this paper is to compute explicitly the fundamental group of
the reductive Borel-Serre compactification of $\ga\backslash X$,
where $\ga$ is an $S$-arithmetic subgroup of $\mathbf G$.  In the case that $\ga$ is neat,
we show that this
fundamental group is isomorphic to $\ga/E\ga$, where $E\ga$ is the subgroup
generated by the elements of $\ga$ belonging to unipotent radicals of
$k$-parabolic subgroups.  Analogous computations of the fundamental group of
the Satake compactifications are made. It is noteworthy that calculations of the congruence
subgroup kernel $C(S,\G)$ yield similar results.
\end{abstract}
\maketitle

\section{Introduction}

Let $X$ be a symmetric space of noncompact type, and let
$G = \Isom(X)^0$.
Pick a basepoint $x_0 \in X$, with isotropy group $K \subset G$.
Then $X \cong G/K$.
Suppose that $G=\G(\R)^0$, where $\G$ is a connected almost simple
algebraic group defined over $\Q$,
and that $\Gamma$ is an arithmetic subgroup of $G$. The associated locally
symmetric space $\Gamma\backslash X$ is typically not compact.
Noncompact arithmetic locally symmetric spaces $\Gamma\backslash X$
admit several different compactifications such
as the Satake compactifications, the Borel-Serre compactification and the
reductive Borel-Serre compactification.  If $\Gamma\backslash X$ is a
Hermitian locally symmetric space, then one of the Satake
compactifications, called the Baily-Borel or Baily-Borel Satake
compactification, is a projective variety.

The cohomology and homology groups of locally symmetric spaces
$\Gamma\backslash X$ and of their compactifications have been intensively
studied because of their relation to the cohomology of $\Gamma$ and to
automorphic forms.
Our interest here is in the fundamental group.  There are a number of results on the
fundamental group of the Baily-Borel Satake compactification of particular
Hermitian locally symmetric spaces  (see \citelist{\cite{hk} \cite{kn} \cite{hs}
  \cite{ge} \cite{Gro} \cite{gro2} \cite{san}}).  In this paper we deal with
arithmetic locally symmetric spaces in general (not necessarily Hermitian).
A natural compactification of $\Gamma\backslash X$ to consider in this context
is the reductive Borel-Serre compactification, which more and more is playing a central role
(see, for example \citelist{\cite{AyoubZucker}\cite{JiMacPherson}\cite{sap3}}).
We show that its fundamental group can be described in terms of ``elementary matrices''.
We also determine the fundamental
group of arbitrary Satake compactifications of a locally symmetric space and actually treat
the more general case of $\Gamma$ an $S$-arithmetic group.

In the remainder of this section we state our results more precisely. In sections 2 and 3 we recall results about
compactifications of locally symmetric spaces and Bruhat-Tits buildings, which we will
need in our computations of fundamental groups. Of particular importance are
descriptions of the topology of these spaces and of the stabilizers of points under the
built-in group action. In section 4 we define the reductive Borel-Serre and Satake compactifications 
of $\Gamma\backslash X$ for a general $S$-arithmetic group $\Gamma$.
In sections 5 and 6 we prove our results.

\subsection{$S$-arithmetic groups and elementary matrices}

Let $k$ be a number field and let $S$ be a finite set of places of $k$ which
contains the infinite places $S_\infty$.  Set $S_f = S \setminus S_\infty$.
For $v\in S$ let $k_v$ denote the
completion of $k$ with respect to a norm associated to $v$.  Denote by
$\OO$ the ring of \emph{$S$-integers}
$$ 
\OO = \{\, x\in k \mid \operatorname{ord}_v (x) \ge 0 \text{ for all
  $v\notin   S$}\,\} \ .
$$
The corresponding group of units $\OO^\times$
is finite if and only if $|S|=1$.

Let $\G$ be an algebraic group defined over $k$ and fix a faithful
representation
\begin{equation*}
\rho\colon \G \longrightarrow \GL_N
\end{equation*}
defined over $k$. Set
\begin{equation*}
\G(\OO) = \rho^{-1}(\GL_N(\OO)) \subset \G(k) .
\end{equation*}
Note that $\G(\OO)$ depends on the representation $\rho$.

A subgroup $\ga \subset \G(k)$ is an \emph{$S$-arithmetic subgroup} if it
is commensurable with $\G(\OO)$; an $S_\infty$-arithmetic subgroup is
simply called an \emph{arithmetic subgroup}.  This definition is
independent of the choice of $\rho$. Note that if $S_1 \subset S_2$, then
an $S_1$-arithmetic group is not necessarily an $S_2$-arithmetic
group. (For example, $\SL_2(\Z)$ is of infinite index in $\SL_2(\Z[1/p])$
for any prime $p$ and in particular they are not commensurable.)  However
if $\ga$ is an $S_2$-arithmetic subgroup and $K_v$ is a compact open
subgroup of $\G(k_v)$ for each $v\in S_2\setminus S_1$, then $\ga\cap
\bigcap_{v\in S_2\setminus S_1} K_v$ is an $S_1$-arithmetic subgroup.

For any $S$-arithmetic subgroup
$\ga$ let
\begin{equation*}
E\ga \subset \ga
\end{equation*}
be the subgroup generated by the elements of $\ga$ belonging to the
unipotent radical of any parabolic $k$-subgroup of $\G$ (the subgroup of
 ``elementary matrices''). Let 
 $$
 S\rank \G = \sum_{v\in S} k_v\rank \G\ .
 $$
 If $k\rank \G >0 $ and $S\rank \G \ge 2$, then $E\ga$
 is $S$-arithmetic \citelist{\cite{Margulis} \cite{Ra2}*{Theorem~ A, Corollary~ 1}}.
 
\subsection{Fundamental groups}

Now let $\G$ be connected, absolutely almost simple, and
simply connected. Let $\mathbf H$
denote the restriction of scalars $\operatorname{Res}_{k/\Q} \G$ of $\G$;
this is a group defined over $\Q$ with $\Q\rank \mathbf H = k\rank \G$.
Let $X_\infty =\mathbf H(\R)/K$ be the symmetric space associated to
$\mathbf H$, where $K$ is a maximal compact subgroup of $\mathbf H(\R)$,
and for $v\in S_f $, let $X_v$ be the Bruhat-Tits building
of $\G(k_v)$.

Consider $X = X_\infty \times \prod_{v\in S\setminus S_\infty} X_v$.  By
extending the work of Borel and Serre \citelist{\cite{Borel-Serre}
\cite{BS2}} and of Zucker \cite{Zu1}, we define in
\S\S\ref{subsectRBSarith}, \ref{subsectRBSSarith} the reductive Borel-Serre
bordification $\overline{X}^{RBS}$ of $X$.  For an $S$-arithmetic
subgroup $\ga$ of $\G(k)$, the action of $\ga$ on $X$ by left translation
extends to $\overline{X}^{RBS}$ and the quotient
$\ga\backslash\overline{X}^{RBS}$ is a compact Hausdorff topological
space, called the \emph{reductive Borel-Serre compactification} of
$\ga\backslash X$.  Our main result (Theorem ~\ref{thmMainArithmetic}) is the
computation of the fundamental group of
$\ga\backslash\overline{X}^{RBS}$.  Under the mild condition that $\ga$
is a neat $S$-arithmetic group, we show (Corollary ~\ref{corNeat}) that
\begin{equation}
\label{eqnFundGp}
\pi_1(\ga\backslash\overline{X}^{RBS}) \cong  \ga / E\ga\ .
\end{equation}
If  $k\rank \G >0$ and $S\rank \G \ge 2$ this is finite. The Satake
compactifications of the locally symmetric space $\ga\backslash X_\infty$
are important as well, as mentioned at the beginning of this introduction.
In \S\ref{subsectSatakeSArith} we define compactifications $\ga\backslash
{}_{\Q}\overline{X}^{\tau}$ of $\ga \backslash X$ which generalize the
Satake compactifications of $\ga\backslash X_\infty$ and in
\S\ref{sectFundGrpArithmetic} we calculate that their fundamental groups
are a certain quotient of $\pi_1(\ga\backslash\overline{X}^{RBS})$.

\begin{rem}
There is an intriguing similarity between our results on the fundamental group 
and computations of the congruence subgroup kernel $C(S,\G)$ (see \cite{SerreBourbaki}
and  \cite{Ra1}). For more details, see the Appendix.
\end{rem}

\section{The reductive Borel-Serre and Satake compactifications: the
  arithmetic case}
\label{sectCompactificationsArithmetic}

In order to establish notation and set the framework for later proofs, we
recall in \S\S\ref{ssectBSarith}--\ref{subsectSatakeArith} several natural
compactifications of the locally symmetric space $\ga\backslash X_\infty$
associated to an arithmetic group $\ga$; in each case a bordification of
$X_\infty$ is described on which $\G(k)$ acts.  We also examine the
stabilizer subgroups of points in these bordifications.  The case of
general $S$-arithmetic groups will be treated in
\S\ref{sectCompactificationsSArithmetic}.  Throughout the paper, $\G$ will
denote a connected, absolutely almost simple, simply connected algebraic
group defined over a number field $k$.

\subsection{Proper and discontinuous actions}
\label{ssectProperDiscontinuousActions}
Recall \cite{BourbakiTopologiePartOne}*{III, \S4.4, Prop.~7} that a
discrete group $\ga$ acts \emph{properly} on a Hausdorff space $Y$ if and
only if for all $y$, $y'\in Y$, there exist neighborhoods $V$ of $y$ and
$V'$ of $y'$ such that $\gamma V\cap V'\neq \emptyset$ for only finitely
many $\gamma \in \ga$.  We will also need the following weaker condition on
the group action:

\begin{defi}[\cite{Gro}*{Definition~1}]
\label{defnDiscontinuous}
The action of a discrete group $\ga$ on a topological space $Y$ is
\emph{discontinuous} if
\begin{enumerate}
\item\label{itemDiscontinuousTwoPoints} for all $y$, $y'\in Y$ with
  $y'\notin \ga y$ there exists neighborhoods $V$ of $y$ and $V'$ of $y'$
  such that $\gamma V\cap V' =\emptyset$ for all $\gamma\in \ga$, and
\item\label{itemDiscontinuousOnePoint} for all $y\in Y$ there exists a
  neighborhood $V$ of $y$ such that $\gamma V\cap V = \emptyset$ for
  $\gamma \notin \ga_y$ and $\gamma V = V$ for $\gamma \in \ga_y$.
\end{enumerate}
\end{defi}

It is easy to check that a group action is proper if and only if it is
discontinuous and the stabilizer subgroup $\ga_y$ is finite for all $y\in
Y$.

\begin{defi}
The \emph{stabilizer} $\Gamma^X$ of a subset $X \subset Y$ is the subgroup
$$
\Gamma^X = \{ \gamma\,  \vert\,  \gamma X = X \}\ .
$$
The \emph{fixing} group (\emph{fixateur}) $\Gamma_X$  is
$$
\Gamma_X = \{ \gamma\,  \vert\,  \gamma x = x,\ \text{for all}\ x \in X \}\ .
$$
Thus for $y \in Y$, $\Gamma^y = \Gamma_y$. Lastly, let
 $\ga_{fix}$ be the subgroup generated by the
  stabilizer subgroups $\ga_{y}$ for all $y\in Y$.  (This subgroup is
  obviously normal.)

\end{defi}

\subsection{The locally symmetric space associated to an arithmetic subgroup}
Let $S_{\infty}$ be the set of all
infinite places of $k$.  For each $v\in S_\infty$, let $k_{v}$ be the
corresponding completion of $k$ with respect to a norm associated with $v$;
thus either $k_{v}\cong \R$ or $k_{v}\cong \C$.  For each $v\in
S_{\infty}$, $\G(k_{v})$ is a (real) Lie group.

Define $G_{\infty}=\prod_{v\in S_{\infty}}\G(k_{v})$, a semisimple Lie
group with finitely many connected components.  Fix a maximal compact
subgroup $K$ of $G_{\infty}$.  When endowed with a $G$-invariant metric,
$X_\infty = G_{\infty}/K$ is a Riemannian symmetric space of noncompact
type and is thus contractible.  Embed $\G(k)$ into $G_\infty$ diagonally.
Then any arithmetic subgroup $\ga\subset \G(k)$ is a discrete subgroup of
$G_\infty$ and acts properly on $X_\infty$.  It is known that the quotient
$\ga\backslash X_\infty$ is compact if and only if the $k$-rank of $\G$ is
equal to 0.  In the following, we assume that the $k$-rank of $\G$ is
positive so that $\ga\backslash X_\infty$ is noncompact.

Since the theory of compactifications of locally symmetric spaces is
usually expressed in terms of algebraic groups defined over $\Q$, let
$\mathbf H=\operatorname{Res}_{k/\Q}\G$ be the algebraic group defined over
$\Q$ obtained by restriction of scalars; it satisfies
\begin{equation}
\label{eqnPointsOfRestrictionScalars}
\mathbf H(\Q)=\G(k) \quad\text{and}\quad \mathbf H(\mathbb R)=G_{\infty}\ .
\end{equation}
The space $X_{\infty}$ can be identified with the symmetric space of
maximal compact subgroups of $\mathbf H(\mathbb R)$, $X_{\infty}=\mathbf
H(\mathbb R)/K$, and the arithmetic subgroup $\ga\subset \G(k)$ corresponds
to an arithmetic subgroup $\ga\subset \mathbf H(\Q)$.  Restriction of
scalars yields a one-to-one correspondence between parabolic $k$-subgroups
of $\G$ and parabolic $\Q$-subgroups of $\mathbf H$ so that the analogue of
\eqref{eqnPointsOfRestrictionScalars} is satisfied.

\subsection{The Borel-Serre compactification}
\label{ssectBSarith}
(For details see the original paper \cite{Borel-Serre}, as well as
\cite{Borel-Ji}.)  For each parabolic $\Q$-subgroup $\P$ of $\mathbf H$,
consider the Levi quotient $\mathbf L_{\P} = \P/\n_{\P}$ where $\n_{\P}$ is
the unipotent radical of $\P$.  This is a reductive group defined over
$\Q$.  There is an almost direct product $\mathbf L_{\P} = \mathbf S_{\P}
\cdot \mathbf M_{\P}$, where $\mathbf S_{\P}$ is the maximal $\Q$-split
torus in the center of $\mathbf L_{\P}$ and $\mathbf M_{\P}$ is the
intersection of the kernels of the squares of all characters of $\mathbf
L_{\P}$ defined over $\Q$.  The real locus $L_P= \mathbf L_\P(\R)$ has a
direct product decomposition $A_P \cdot M_P$, where $A_P = \mathbf
S_\P(\R)^0$ and $M_P = \mathbf M_\P(\R)$.  The dimension of $A_P$ is called
the \emph{parabolic $\Q$-rank} of $\P$.

The real locus $P=\P(\R)$ has a Langlands decomposition
\begin{equation}\label{rationalLanglands}
P=N_{P} \ltimes (\widetilde A_P \cdot \widetilde M_ P),
\end{equation}
where $N_{P}= \n_{\P}(\R)$ and $\widetilde A_P \cdot \widetilde M_ P$ is
the lift of $A_P \cdot M_P$ to the unique Levi subgroup of $P$ which is
stable under the Cartan involution $\theta$ associated with $K$.

Since $P$ acts transitively on $X_\infty$, the Langlands decomposition induces a
horospherical decomposition
\begin{equation}\label{horo}
X_\infty \cong  A_P\times N_{P}\times X_P,\quad u\tilde a\tilde mK \mapsto
(\tilde a,u,\tilde m(K\cap \widetilde M_P),
\end{equation}
where 
\begin{equation*}
X_P= \widetilde M_P / (K \cap \widetilde M_P) \cong  L_P/(A_P\cdot K_P)
\end{equation*}
is a symmetric space (which might contain an Euclidean factor) and is
called the \emph{boundary symmetric space associated with $\P$}.  The
second expression for $X_P$ is preferred since $\mathbf L_\P$ is defined
over $\Q$; here $K_P\subseteq \mathbf M_\P(\R)$ corresponds to $K \cap
\widetilde M_P$

For each parabolic $\Q$-subgroup $\P$ of $\mathbf H$, define the
Borel-Serre boundary component
\begin{equation*}
e(P)=N_{P}\times X_P
\end{equation*}
which we view as the quotient of $X_\infty$ obtained by collapsing the
first factor in \eqref{horo}.  The action of $P$ on $X_\infty$ descends to
an action on $e(P)=N_{P}\times X_P$ given by
\begin{equation}
\label{eqnBoundaryAction}
p\cdot (u, y) = (pu\tilde m_p^{-1}\tilde a_p^{-1} , \tilde a_p \tilde m_p
y), \qquad \text{for $p=u_p \tilde a_p \tilde m_p\in P$.}
\end{equation}
Define the Borel-Serre partial compactification $\overline{X}_\infty^{BS}$ (as a
\emph{set}) by
\begin{equation}
\label{BSPartialCompactification}
\overline{X}_\infty^{BS}=X_\infty\cup \coprod_{\P\subset \mathbf H} e(P).
\end{equation}

Let $\Delta_P$ be the simple ``roots'' of the adjoint action of $A_P$ on
the Lie algebra of $N_P$ and identify $A_P$ with $(\R^{>0})^{\Delta_P}$ by
$a \mapsto (a^{-\alpha})_{\alpha\in\Delta_P}$.  Enlarge $A_P$ to the
topological semigroup $\overline A_P \cong (\R^{\ge0})^{\Delta_P}$ by
allowing $a^\alpha$ to attain infinity and define
\begin{equation*}
\overline A_P(s) = \{\, a\in \overline A_P\mid a^{-\alpha} < s^{-1} \text{
  for all $\alpha\in \Delta_P$}\,\}\cong [0,s^{-1})^{\Delta_P}\ ,\qquad
  \text{for $s>0$}\ .
\end{equation*}
Similarly enlarge the Lie algebra $\La_P \subset \overline\La_P$.  The
inverse isomorphisms $\exp\colon \La_P \to A_P$ and $\log\colon A_P \to
\La_P$ extend to isomorphisms
\begin{equation*}
\overline A_P \overset{\log}{\longrightarrow} \overline \La_P
\qquad\text{and} \qquad \overline \La_P \overset{\exp}{\longrightarrow}
\overline A_P.
\end{equation*}

To every parabolic $\Q$-subgroup $\mathbf Q\supseteq \P$ there corresponds
a subset $\Delta_P^Q \subseteq \Delta_P$ and we let $o_Q\in
\overline A_P$ be the point with coordinates $o_Q^{-\alpha} =1$
for $\alpha\in \Delta_P^Q$ and $o_Q^{-\alpha} =0$ for
$\alpha\notin \Delta_P^Q$.  Then $\overline A_P = \coprod_{\mathbf Q
\supseteq \P} A_P \cdot o_Q$ is the decomposition into
$A_P$-orbits.

Define the \emph{corner associated to $\mathbf P$} to be
\begin{equation}
\label{Pcorner}
X_\infty(P) = \overline A_P \times e(P) = \overline A_P \times N_P \times X_P.
\end{equation}
We identify $e(Q)$ with the subset $ (A_P\cdot o_Q) \times N_P\times X_P$.
In particular, $e(P)$ is identified with the subset $\{o_P\}\times
N_P\times X_P$ and $X_\infty$ is identified with the open subset $A_P \times
N_P\times X_P \subset X_\infty(P)$ (compare \eqref{horo}).  Thus we have a
bijection
\begin{equation}
\label{strataPcorner}
X_\infty(P) \cong X_\infty \cup \coprod_{\P \subseteq \mathbf Q \subset
  \mathbf H} e(Q).
\end{equation}

Now give $\overline X_\infty^{BS}$ the finest topology so that for all
parabolic $\Q$-subgroups $\P$ of $\mathbf H$ the inclusion of
\eqref{strataPcorner} into \eqref{BSPartialCompactification} is a
continuous inclusion of an open subset.  Under this topology, a sequence
$x_n\in X$ converges in $\overline X_\infty^{BS}$ if and only if there
exists a parabolic $\Q$-subgroup $\P$ such that if we write $x_n=(a_n, u_n,
y_n)$ according to the decomposition of \eqref{horo}, then $(u_n,y_n)$
converges to a point in $e(P)$ and $a_n^\alpha\to \infty$ for all
$\alpha\in \Delta_P$.  The space $\overline X_\infty^{BS}$ is a manifold
with corners.  It has the same homotopy type as $X_\infty$ and is thus
contractible \cite{Borel-Serre}.

The action of $\mathbf H(\Q)$ on $X_\infty$ extends to a continuous action
on $\overline{X}_\infty^{BS}$ which permutes the boundary components:
$g\cdot e(P) = e(gPg^{-1})$ for $g\in \mathbf H(\Q)$.  The normalizer of
$e(P)$ is $\P(\Q)$ which acts according to \eqref{eqnBoundaryAction}.

It is shown in \cite{Borel-Serre} that the action of $\ga$ on
$\overline{X}_\infty^{BS}$ is proper and the quotient $\ga\backslash
\overline{X}_\infty^{BS}$, the \emph{Borel-Serre compactification}, is a compact
Hausdorff space.  It is a manifold with corners if $\ga$ is torsion-free.

\subsection{The reductive Borel-Serre compactification}
\label{subsectRBSarith}
This compactification was first constructed by Zucker \cite{Zu1}*{\S4} (see also
\cite{GHM}).  For each parabolic $\Q$-subgroup $\P$ of $\mathbf H$, define
its reductive Borel-Serre boundary component $\hat{e}(P)$ by
\begin{equation*}
\hat{e}(P)=X_P
\end{equation*}
and set
\begin{equation*}
\overline{X}_\infty^{RBS}=X_\infty\cup \coprod_{\P} \hat{e}(P).
\end{equation*}
The projections $p_P\colon e(P) = N_P\times X_P \to \hat e(P) = X_P$ induce
a surjection $p\colon \overline{X}_\infty^{BS} \to
\overline{X}_\infty^{RBS}$ and we give $\overline{X}_\infty^{RBS}$ the
quotient topology.  Its topology can also be described in terms of
convergence of interior points to the boundary points via the horospherical
decomposition in equation \eqref{horo}.  Note that $\overline{X}^{RBS}$ is
not locally compact, although it is compactly generated (being a Hausdorff
quotient of the locally compact space $\overline{X}^{BS}$).  The action of
$\mathbf H(\Q)$ on $\overline{X}_\infty^{BS}$ descends to a continuous
action on $\overline{X}_\infty^{RBS}$.

\begin{lem}
\label{lemStabilizersRBS}
Let $\P$ be a parabolic $\Q$-subgroup of $\mathbf H$.
The stabilizer $\mathbf H(\Q)_z= \G(k)_z$ of $z\in X_P$ under the action of
$\mathbf H(\Q)$ on $\oX^{RBS}_\infty$ satisfies a short exact sequence
\begin{equation*}
1 \to \n_{\P}(\Q) \to \mathbf H(\Q)_z \to \mathbf L_{\P}(\Q)_z \to 1
\end{equation*}
where $\mathbf L_{\P}(\Q)_z$ is the stabilizer of $z$ under the action of
$\mathbf L_{\P}(\Q)$ on $X_P$.
\end{lem}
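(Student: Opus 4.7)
The plan is to reduce to understanding the action of $\P(\Q)$ on $\hat e(P)=X_P$ and then invoke the standard structure of the Levi decomposition.

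First I would show that $\mathbf H(\Q)_z\subseteq \P(\Q)$. Since $\mathbf H(\Q)$ permutes the boundary components of $\overline X^{BS}_\infty$ by $g\cdot e(P)=e(gPg^{-1})$, the same holds for the reductive Borel-Serre components $\hat e(P)$. Thus any $g\in \mathbf H(\Q)$ that fixes $z\in X_P=\hat e(P)$ must preserve the component $\hat e(P)$, so $gPg^{-1}=P$. Because parabolic subgroups are self-normalizing, this forces $g\in \P(\Q)$.

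Next, I would analyze how $\P(\Q)$ acts on $X_P$. Using the Langlands decomposition \eqref{rationalLanglands} and the action formula \eqref{eqnBoundaryAction}, an element $p=u_p\tilde a_p\tilde m_p\in P$ acts on $(u,y)\in e(P)=N_P\times X_P$ by modifying $y$ only through $\tilde a_p\tilde m_p$. Passing to $\hat e(P)=X_P$, the action factors through the quotient $\P\to \mathbf L_\P=\P/\n_\P$, yielding the standard action of $\mathbf L_\P(\Q)$ on $X_P=L_P/(A_PK_P)$. Hence $\n_\P(\Q)$ acts trivially on $X_P$ and is contained in $\mathbf H(\Q)_z$, and the induced map
\begin{equation*}
\mathbf H(\Q)_z=\P(\Q)_z\longrightarrow \mathbf L_\P(\Q)_z
\end{equation*}
has kernel exactly $\n_\P(\Q)$.

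Finally, I would verify surjectivity of this map. The exact sequence of algebraic $\Q$-groups $1\to \n_\P\to \P\to \mathbf L_\P\to 1$ gives a long exact sequence in Galois cohomology, and since $\n_\P$ is a connected unipotent group over a field of characteristic zero, $H^1(\Q,\n_\P)=0$. Therefore $\P(\Q)\to \mathbf L_\P(\Q)$ is surjective. Given any $\bar\ell\in \mathbf L_\P(\Q)_z$, lift it to some $p\in \P(\Q)$; since $p$ acts on $X_P$ through its image $\bar\ell$, we get $p\cdot z=z$, so $p\in \mathbf H(\Q)_z$. This establishes the right-hand surjectivity, completing the short exact sequence.

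The only point requiring genuine care, as opposed to unwinding definitions, is the surjectivity of $\P(\Q)\to \mathbf L_\P(\Q)$; everything else is a direct translation of the formulas in \eqref{eqnBoundaryAction} together with the fact that parabolic subgroups are self-normalizing.
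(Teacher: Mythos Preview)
Your proof is correct and follows the same line as the paper's (one-sentence) proof: the normalizer of $X_P$ in $\mathbf H(\Q)$ is $\P(\Q)$, which acts through its quotient $\mathbf L_\P(\Q)$. You have simply made explicit the steps the paper leaves implicit, including the surjectivity $\P(\Q)\twoheadrightarrow \mathbf L_\P(\Q)$; note that instead of invoking $H^1(\Q,\n_\P)=0$ you could equally well cite the existence of a $\Q$-rational Levi subgroup of $\P$, which directly gives a section.
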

\begin{proof}
The normalizer of $X_P$ under the action of $\mathbf H(\Q)$ is $\P(\Q)$
which acts via its quotient $\mathbf L_{\P}(\Q)$. 
\end{proof}

By the lemma, the action of $\ga$ on $\overline{X}_\infty^{RBS}$ is not
proper since the stabilizer of a boundary point in $X_P$ contains the
infinite group $\ga_{N_P} = \ga\cap N_P$.  Nonetheless
\begin{lem}
\label{lemRBSDiscontinuous}
The action of an arithmetic subgroup $\ga$ on $\overline{X}_\infty^{RBS}$
is discontinuous and the arithmetic quotient
$\ga\backslash\overline{X}_\infty^{RBS}$ is a compact Hausdorff space.
\end{lem}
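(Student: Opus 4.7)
The plan is to deduce both assertions from the corresponding properties of the Borel-Serre compactification via the $\ga$-equivariant continuous surjection $p\colon \overline{X}_\infty^{BS}\to \overline{X}_\infty^{RBS}$ recalled in \S\ref{subsectRBSarith}. Compactness is immediate: $p$ descends to a continuous surjection $\ga\backslash\overline{X}_\infty^{BS}\twoheadrightarrow \ga\backslash\overline{X}_\infty^{RBS}$, and the former space is compact by the Borel-Serre theorem.

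For discontinuity I would build a fundamental system of neighborhoods of a boundary point $y\in X_P$ out of the corner $X_\infty(P)$ attached to $\P$. Given $s>0$ and an open neighborhood $W$ of $y$ in $X_P$, let $V(s,W)\subset \overline{X}_\infty^{RBS}$ denote the image under $X_\infty(P)\to \overline{X}_\infty^{BS}\to \overline{X}_\infty^{RBS}$ of $\overline A_P(s)\times N_P\times W$. By construction $V(s,W)$ is invariant under left translation in the $N_P$-factor, hence under the cocompact lattice $\ga_{N_P}=\ga\cap N_P$. The reduction-theoretic input---essentially the estimate Borel and Serre use to establish properness of the $\ga$-action on $\overline{X}_\infty^{BS}$, combined with finiteness of $\ga$-conjugacy classes of parabolic $\Q$-subgroups---guarantees that for $s$ sufficiently large only finitely many cosets $\gamma\ga_{N_P}$ satisfy $\gamma V(s,W)\cap V(s,W)\neq\emptyset$. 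Shrinking $W$ around $y$ eliminates those $\gamma$ that do not fix $y$, and combined with Lemma~\ref{lemStabilizersRBS} this yields condition~(ii) of Definition~\ref{defnDiscontinuous}. For a point $y'\notin \ga y$ the same argument applied simultaneously to basic neighborhoods $V(s,W)$ of $y$ and $V(s',W')$ of $y'$ (possibly attached to different parabolics) yields condition~(i) after further shrinking. Interior points present no difficulty because $\ga$ already acts properly on $X_\infty$.

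Hausdorffness of the quotient follows immediately from condition~(i): the $\ga$-saturations of the separating neighborhoods $V(s,W)$ and $V(s',W')$ are disjoint $\ga$-stable open sets, and therefore descend to disjoint open neighborhoods of $[y]$ and $[y']$ in $\ga\backslash\overline{X}_\infty^{RBS}$. The main obstacle is the last refinement of the neighborhood $V(s,W)$, namely arranging that it be \emph{exactly} $\ga_y$-invariant rather than merely $\ga_{N_P}$-invariant up to a finite remainder; the built-in $N_P$-invariance handles the unipotent part of $\ga_y$, while the remaining refinement relies on the properness of the induced arithmetic action of the quotient $\ga_y/\ga_{N_P}\subset \mathbf L_\P(\Q)$ on the boundary symmetric space $X_P$, which lets $W$ be chosen stable under $\ga_y/\ga_{N_P}$.
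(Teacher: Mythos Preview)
Your treatment of compactness and of condition~(ii) of Definition~\ref{defnDiscontinuous} is essentially the paper's: both take $V=p(\overline A_P(s)\times N_P\times O_x)$, invoke reduction theory (the paper cites \cite{Zu3}*{(1.5)}) to reduce to $\ga_P$ for $s$ large, and then use properness of the $\ga_{L_P}$-action on $X_P$ to choose $O_x$ so that only $\ga_y$ survives.

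For condition~(i), however, the paper takes a different route. Rather than separating two non-equivalent points directly in $\overline{X}^{RBS}_\infty$, it proves the equivalent statement that the quotient is Hausdorff by working with the induced map $p'\colon \ga\backslash\overline{X}^{BS}_\infty \to \ga\backslash\overline{X}^{RBS}_\infty$. The fibers of $p'$ are the compact nilmanifolds $\ga_{N_P}\backslash N_P$, and a tube-lemma argument shows that every neighborhood of a fiber $p'^{-1}(y)$ contains one of the form $p'^{-1}(U)$; Hausdorffness of the target then follows from that of the (compact Hausdorff) source $\ga\backslash\overline{X}^{BS}_\infty$.

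Your direct approach to~(i) is plausible, but the phrase ``the same argument applied simultaneously'' hides real work. When $y\in X_P$ and $y'\in X_{P'}$ lie in strata attached to \emph{different} parabolics---in particular nested ones $\P\subsetneq\P'$, so that $X_P$ lies in the closure of $X_{P'}$---the statement you need is not the Siegel finiteness property for a single parabolic but a separation statement comparing two Siegel-type neighborhoods at different levels. This can be carried out, but it is precisely the case analysis the paper's argument sidesteps by descending to the Borel--Serre quotient and exploiting compactness of the fibers of $p'$. So your outline is not wrong, but the step you flag as routine is the one that actually requires justification, whereas the paper's approach reduces everything to properties of $\ga\backslash\overline{X}^{BS}_\infty$ already established in \cite{Borel-Serre}.
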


\begin{proof}
We begin by verifying Definition
~\ref{defnDiscontinuous}\ref{itemDiscontinuousOnePoint}.  Let $x\in X_P
\subseteq \overline{X}_\infty^{RBS}$.  Set $\ga_P = \ga\cap P$ and
$\ga_{L_P} = \ga_P/\ga_{N_P}$.  Since $\ga_{L_P}$ acts properly on $X_P$
there exists a neighborhood $O_x$ of $x$ in $X_P$ such that $\bar \gamma
O_x \cap O_x \neq \emptyset$ if and only if $\bar \gamma \in \ga_{L_P,x}$,
in which case $\bar \gamma O_x = O_x$.  We can assume $O_x$ is relatively
compact.  Set $V=p(\overline{A}_P(s)\times N_P \times O_x)$, where we chose
$s$ sufficiently large so that that only identifications induced by $\ga$
on $V$ already arise from $\ga_P$ \cite{Zu3}*{(1.5)}.  Thus $\gamma V\cap
V\neq \emptyset$ if and only if $\gamma \in \ga_P$ and $\gamma \ga_{N_P}
\in \ga_{L_P,x}$; by Lemma ~\ref{lemStabilizersRBS} this occurs if and only
if $\gamma \in \ga_x$ as desired.

To verify Definition
~\ref{defnDiscontinuous}\ref{itemDiscontinuousTwoPoints} we will show the
equivalent condition that $\ga\backslash\overline{X}_\infty^{RBS}$ is
Hausdorff (compare \cite{Zu1}*{(4.2)}).  Compactness will follow since it
is the image of a compact space under the induced projection $p'\colon
\ga\backslash \overline{X}_\infty^{BS} \to \ga\backslash
\overline{X}_\infty^{RBS}$.  Observe that $p'$ is a quotient map and that
its fibers, each being homeomorphic to $\ga_{N_P}\backslash N_P$ for some
$\P$, are compact. For $y\in \ga\backslash\overline{X}_\infty^{RBS}$ and
$W$ a neighborhood of $p'^{-1}(y)$, we claim there exists $U\ni y$ open
such that $p'^{-1}(U)\subseteq W$.  This suffices to establish Hausdorff,
for if $y_1\neq y_2 \in \ga\backslash\overline{X}_\infty^{RBS}$ and $W_1$
and $W_2$ are disjoint neighborhoods of the compact fibers $p'^{-1}(y_1)$
and $p'^{-1}(y_2)$, there must exist $U_1$ and $U_2$, neighborhoods of
$y_1$ and $y_2$, such that $p'^{-1}(U_i) \subseteq W_i$ and hence $U_1\cap
U_2 =\emptyset$.

To prove the claim, choose $x\in X_P$ such that $y=\ga x$.  Let $q\colon
\overline{X}_\infty^{BS} \to \ga\backslash \overline{X}_\infty^{BS} $ be
the quotient map.  The compact fiber $p'^{-1}(y)$ may be covered by
finitely many open subsets $q(\overline A_P(s_\mu)\times C_{P,\mu} \times
O_{P,\mu}) \subseteq W$ where $C_{P,\mu} \subseteq N_P$ and $x\in
O_{P,\mu}\subseteq X_P$.  Define a neighborhood $V$ of the fiber by
\begin{equation*}
p'^{-1}(y) \subset V = q(\overline A_P(s)\times C_{P}
\times O_{P}) \subseteq W
\end{equation*}
where $s = \max \,s_\mu$, $O_P = \bigcap O_{P,\mu}$, and $C_P = \bigcup C_{P,\mu}$.
Since $\ga_{N_P}C_P = N_P$, we see $V=p'^{-1}(U)$ for some $U\ni y$ as
desired.
\end{proof}

\subsection{Satake compactifications}
\label{subsectSatakeArith}
For arithmetic quotients of $X_\infty$, the Satake compactifications
$\ga\backslash {}_{\Q}\overline{X}_\infty^{\tau}$ form an important family of
compactifications.  When $X_\infty$ is Hermitian, one example is the Baily-Borel
Satake compactification. The construction has three steps.
\begin{enumerate}
\item Begin%
\footnote{Here we follow \cite{Cass} in beginning with a spherical
  representation. Satake's original construction \cite{sat1} started with a
  non-spherical representation but then constructed a spherical
  representation by letting $G_\infty$ act on the space of self-adjoint
  endomorphisms of $V$ with respect to an admissible inner product. See
  \cite{sap2} for the relation of the two constructions.}
with a representation $(\tau,V)$ of $\mathbf H$ which has a nonzero
$K$-fixed vector $v\in V$ (a \emph{spherical representation}) and which is
irreducible and nontrivial on each noncompact $\R$-simple factor of
$\mathbf H$.  Define the Satake compactification $\overline{X}_\infty^{\tau}$
of $X$ to be the closure of the image of the embedding $X_\infty \hookrightarrow
\mathbb P(V)$, $gK \mapsto [ \tau(g) v]$.  The action of $G_\infty$ extends
to a continuous action on $\overline{X}_\infty^{\tau}$ and the set of points
fixed by $N_P$, where $\P$ is any parabolic $\R$-subgroup, is called a
\emph{real boundary component}.  The compactification
$\overline{X}_\infty^{\tau}$ is the disjoint union of its real boundary
components.

\item Define a partial compactification
  ${}_{\Q}\overline{X}_\infty^{\tau}\subseteq \overline{X}_\infty^{\tau}$
  by taking the union of $X_\infty$ and those real boundary components that
  meet the closure of a Siegel set.  Under the condition that
  $\overline{X}_\infty^{\tau}$ is \emph{geometrically rational}
  \cite{Cass}, this is equivalent to considering those real boundary
  components whose normalizers are parabolic $\Q$-subgroups; call these the
  \emph{rational boundary components}.  Instead of the subspace topology
  induced from $\overline{X}_\infty^{\tau}$, give
  ${}_{\Q}\overline{X}_\infty^{\tau}$ the Satake topology \cite{sat2}.

\item Still under the condition that $\overline{X}_\infty^{\tau}$ is
  geometrically rational, one may show that the arithmetic subgroup $\ga$
  acts continuously on ${}_{\Q}\overline{X}_\infty^{\tau}$ with a
  compact Hausdorff quotient, $\ga\backslash
  {}_{\Q}\overline{X}_\infty^{\tau}$.  This is the \emph{Satake
  compactification} of $\ga\backslash X_\infty$.
\end{enumerate}

The geometric rationality condition above always holds if the
representation $(\tau,V)$ is rational over $\Q$ \cite{sap2}.  It also holds
for the Baily-Borel Satake compactification \cite{BB}, as well as most
equal-rank Satake compactifications including all those where $\Q\rank
\mathbf H >2$.

We will now describe an alternate construction of
${}_{\Q}\overline{X}_\infty^{\tau}$ due to Zucker \cite{Zu2}.  Instead of
the Satake topology, Zucker gives ${}_{\Q}\overline{X}_\infty^{\tau}$ the
quotient topology under a certain surjection $\overline{X}_\infty^{RBS}
\to {}_{\Q}\overline{X}_\infty^{\tau}$ described below.  It is this
topology we will use in this paper.  Zucker proves that the resulting two
topologies on $\ga\backslash {}_{\Q}\overline{X}_\infty^{\tau}$ coincide.

Let $(\tau,V)$ be a spherical representation as above.  We assume that
$\overline{X}_\infty^{\tau}$ is geometrically rational.  For any parabolic
$\Q$-subgroup $\P$ of $\mathbf H$, let $X_{P,\tau}\subseteq \overline
X_\infty^{\tau}$ be the real boundary component fixed pointwise by $N_P$;
geometric rationality implies that $X_{P,\tau}$ is actually a rational
boundary component.  The transitive action of $P$ on $X_{P,\tau}$ descends
to an action of $L_P = P/N_P$.  The geometric rationality condition ensures
that there exists a normal $\Q$-subgroup $\mathbf L_{\P, \tau} \subseteq
\mathbf L_{\P}$ with the property that $L_{P,\tau}= \mathbf L_{\P,
  \tau}(\R)$ is contained in the centralizer
$\operatorname{Cent}(X_{P,\tau})$ of $X_{P,\tau}$ and
$\operatorname{Cent}(X_{P,\tau})/L_{P,\tau}$ is compact.  Then $X_{P,\tau}$
is the symmetric space associated to the $\Q$-group $\mathbf H_{\P,
\tau} = \mathbf L_{\P} / \mathbf L_{\P,\tau}$.   There is an
almost direct product decomposition
\begin{equation}
\label{eqnSatakeLeviDecomposition}
\mathbf L_{\P} = \widetilde {\mathbf H}_{\P, \tau} \cdot \mathbf L_{\P,
  \tau}\ ,
\end{equation}
where $\widetilde {\mathbf H}_{\P, \tau}$ is a lift of $\mathbf H_{\P,
  \tau}$; the root systems of these factors may be described using the
highest weight of $\tau$.  We obtain a decomposition of symmetric spaces
\begin{equation}
\label{eqnBoundaryDecomposition}
X_P = X_{P,\tau} \times  W_{P,\tau}\ .
\end{equation}

Different parabolic $\Q$-subgroups can yield the same rational boundary
component $X_{P,\tau}$; if $\P^\dag$ is the maximal such parabolic
$\Q$-subgroup, then 
\begin{equation}\label{Pdagger}
P^\dag=\P^\dag(\R)
\end{equation}
is the normalizer of $X_{P,\tau}$.
The parabolic $\Q$-subgroups that arise as the normalizers of rational
boundary components are called \emph{$\tau$-saturated}.  For example, all
parabolic $\Q$-subgroups are saturated for the maximal Satake
compactification, while only the maximal parabolic $\Q$-subgroups are
saturated for the Baily-Borel Satake compactification when $\mathbf H$ is
$\Q$-simple.  In general, the class of $\tau$-saturated parabolic
$\Q$-subgroups can be described in terms of the highest weight of $\tau$.

Define 
\begin{equation*}
{}_{\Q}\overline{X}_\infty^{\tau}=X_\infty\cup \coprod_{\text{$\mathbf Q$
    $\tau$-saturated}} X_{Q,\tau}\ .
\end{equation*}
A surjection $p\colon \overline{X}_\infty^{RBS} \to
{}_{\Q}\overline{X}_\infty^{\tau}$ is obtained by mapping $X_P$ to
$X_{P,\tau} = X_{P^\dag,\tau}$ via the projection on the first factor in
\eqref{eqnBoundaryDecomposition}.  Give ${}_{\Q}\overline{X}_\infty^{\tau}$
the resulting quotient topology; the action of $\mathbf H(\Q)$ on
$\overline{X}_\infty^{RBS}$ descends to a continuous action on
${}_{\Q}\overline{X}_\infty^{\tau}$. 

Let $\P_\tau$ be the inverse image of $\mathbf L_{\P,\tau}$ under
the projection $\P \to \P/\n_{\P}$.
\begin{lem}
\label{lemStabilizersSatake}
Let $\P$ be a $\tau$-saturated parabolic $\Q$-subgroup of $\mathbf H$.  The
stabilizer $\mathbf H(\Q)_z = \G(k)_z$ of $z\in X_{P,\tau}$ under the
action of $\mathbf H(\Q)$ on 
${}_{\Q}\overline{X}^{\tau}_\infty$ satisfies a short exact sequence
\begin{equation*}
1 \to  \P_{\tau}(\Q) \to \mathbf H(\Q)_z \to  \mathbf H_{\P,\tau}(\Q)_z \to 1,
\end{equation*}
where $\mathbf H_{\P,\tau}(\Q)_z$ is the stabilizer of $z$ under the action
of $\mathbf H_{\P,\tau}(\Q)$ on $X_{P,\tau}$.
\end{lem}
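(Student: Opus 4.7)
The plan is to mirror the proof of Lemma~\ref{lemStabilizersRBS}, replacing the normalizer of $X_P$ in $\mathbf H(\Q)$ with the normalizer of the rational boundary component $X_{P,\tau}$. The key inputs are (a) that $\P$ is $\tau$-saturated, so $\P$ is the full stabilizer of $X_{P,\tau}$ in $\mathbf H$, and (b) that the action of $\P(\Q)$ on $X_{P,\tau}$ factors through $\mathbf H_{\P,\tau}(\Q) = \mathbf L_\P(\Q)/\mathbf L_{\P,\tau}(\Q)$ with kernel exactly $\P_\tau(\Q)$.

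First I would show that $\mathbf H(\Q)_z \subseteq \P(\Q)$. Since the action of $\mathbf H(\Q)$ on ${}_\Q\overline X_\infty^\tau$ permutes rational boundary components by $g\cdot X_{Q,\tau} = X_{gQg^{-1},\tau}$, and since distinct rational boundary components are disjoint in the stratification defining ${}_\Q\overline X_\infty^\tau$, any $g\in \mathbf H(\Q)_z$ must send $X_{P,\tau}$ to itself, that is $X_{gPg^{-1},\tau} = X_{P,\tau}$. Because $\P$ is $\tau$-saturated (i.e.\ the maximal parabolic $\Q$-subgroup whose rational boundary component is $X_{P,\tau}$), this forces $gPg^{-1} = \P$ and hence $g \in \P(\Q)$.

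Next I would analyze the action of $\P(\Q)$ on $X_{P,\tau}$. The unipotent radical $N_P$ acts trivially on $X_{P,\tau}$ by definition of the latter as the $N_P$-fixed boundary component, so the action descends to $\mathbf L_\P(\Q)$. Moreover, $\mathbf L_{\P,\tau}$ was defined so that $L_{P,\tau}$ lies in the centralizer of $X_{P,\tau}$, so $\mathbf L_{\P,\tau}(\Q)$ acts trivially and the action further descends to $\mathbf H_{\P,\tau}(\Q)$, where it is by construction the action on $X_{P,\tau}$ as the symmetric space of $\mathbf H_{\P,\tau}$. The kernel of the composition $\P(\Q) \to \mathbf L_\P(\Q) \to \mathbf H_{\P,\tau}(\Q)$ is, by definition of $\P_\tau$, precisely $\P_\tau(\Q)$. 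Intersecting with $\mathbf H(\Q)_z$ therefore yields the desired sequence once we check surjectivity of the right-hand map.

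The only nontrivial point is the surjectivity of $\mathbf H(\Q)_z \to \mathbf H_{\P,\tau}(\Q)_z$, which amounts to lifting a stabilizing element in $\mathbf H_{\P,\tau}(\Q)$ to a stabilizing element in $\P(\Q)$. For this I would use the almost direct product decomposition \eqref{eqnSatakeLeviDecomposition}: the lift $\widetilde{\mathbf H}_{\P,\tau} \hookrightarrow \mathbf L_\P$ composes with $\mathbf L_\P \to \mathbf H_{\P,\tau}$ to an isogeny, providing a rational section at the level of $\Q$-points up to finite kernel, and composing with a section of $\P(\Q) \to \mathbf L_\P(\Q)$ (which exists since $\n_\P$ is unipotent, so $H^1(\Q,\n_\P)=0$) gives an actual lift of any element of $\mathbf H_{\P,\tau}(\Q)$ to $\P(\Q)$ acting on $X_{P,\tau}$ in the prescribed way. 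The main thing to be careful about is thus the isogeny-versus-isomorphism subtlety in \eqref{eqnSatakeLeviDecomposition}; but since the lift $\widetilde{\mathbf H}_{\P,\tau}$ is chosen inside $\mathbf L_\P$ so that the composition with the projection to $\mathbf H_{\P,\tau}$ is the identity up to the central isogeny and the action on $X_{P,\tau}$ only depends on the image in $\mathbf H_{\P,\tau}$, this causes no trouble.
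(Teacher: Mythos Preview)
Your approach is exactly the paper's: identify the normalizer of $X_{P,\tau}$ in $\mathbf H(\Q)$ as $\P(\Q)$ (using $\tau$-saturation), and observe that $\P(\Q)$ acts through its quotient by $\P_\tau(\Q)$. The paper's entire proof is the single sentence ``the normalizer of $X_{P,\tau}$ is $\P(\Q)$ which acts via its quotient $\P(\Q)/\P_\tau(\Q) = \mathbf H_{\P,\tau}(\Q)$,'' so you have in fact supplied more detail than the original.

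You are right to flag surjectivity on the right as the only point requiring thought, but your final paragraph does not actually resolve it. The observation that the action on $X_{P,\tau}$ depends only on the image in $\mathbf H_{\P,\tau}$ says nothing about whether every $\Q$-point of $\mathbf H_{\P,\tau}$ lifts to $\P(\Q)$: the composite $\widetilde{\mathbf H}_{\P,\tau} \hookrightarrow \mathbf L_\P \to \mathbf H_{\P,\tau}$ is only an isogeny, and an isogeny need not be surjective on $\Q$-points, so your proposed section ``up to central isogeny'' does not produce the required lifts. The paper simply asserts the equality $\P(\Q)/\P_\tau(\Q) = \mathbf H_{\P,\tau}(\Q)$ without further comment, so on this point you are no worse off than the source; and in any case the only downstream use of the lemma (Proposition~\ref{propGammaFixedIsEGamma}) needs just the inclusion $\P_\tau(\Q)\subseteq \mathbf H(\Q)_z$ and the identification of the kernel, not the surjectivity.
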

\begin{proof}
As in the proof of Lemma ~\ref{lemStabilizersRBS}, the normalizer of
$X_{P,\tau}$ is $\P(\Q)$ which acts via its quotient $\P(\Q)/\P_\tau(\Q) =
\mathbf H_{\P,\tau}(\Q)$.
\end{proof}

Similarly to $\overline{X}^{RBS}$, the space
${}_{\Q}\overline{X}_\infty^{\tau}$ is not locally compact and $\ga$ does
not act properly.  Nonetheless one has the
\begin{lem}
\label{lemSatakeDiscontinuous}
The action of an arithmetic subgroup $\ga$ on ${}_{\Q}\overline{X}_\infty^{\tau}$
is discontinuous and the arithmetic quotient
$\ga\backslash {}_{\Q}\overline{X}_\infty^{\tau}$ is a compact Hausdorff space.
\end{lem}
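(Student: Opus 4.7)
The plan is to adapt the proof of Lemma~\ref{lemRBSDiscontinuous}, using that ${}_{\Q}\overline{X}_\infty^{\tau}$ is by construction the quotient of $\overline{X}_\infty^{RBS}$ under the continuous $\ga$-equivariant surjection $p$, which on each stratum $X_P$ is projection onto the first factor of \eqref{eqnBoundaryDecomposition}. Both conditions of Definition~\ref{defnDiscontinuous}, together with Hausdorffness and compactness of the arithmetic quotient, should follow by pushing forward the corresponding properties of $\overline{X}_\infty^{RBS}$ along $p$ and the induced map $p'\colon \ga\backslash\overline{X}_\infty^{RBS}\to\ga\backslash{}_{\Q}\overline{X}_\infty^{\tau}$.

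For Definition~\ref{defnDiscontinuous}\ref{itemDiscontinuousOnePoint}, I would fix $x\in X_{P,\tau}$ with $\mathbf P=\mathbf P^\dag$ a $\tau$-saturated parabolic $\Q$-subgroup, and set $\ga_P=\ga\cap\P(\Q)$, $\ga_{\P_\tau}=\ga\cap\P_\tau(\Q)$. The quotient $\bar\ga := \ga_P/\ga_{\P_\tau}$ injects as an arithmetic subgroup into $\mathbf H_{\P,\tau}(\Q)$ and so acts properly on $X_{P,\tau}$. Choose a relatively compact neighborhood $O_x\subseteq X_{P,\tau}$ so that $\bar\gamma O_x\cap O_x\neq\emptyset$ iff $\bar\gamma\in\bar\ga_x$, with $\bar\gamma O_x=O_x$ in that case. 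Take $V$ to be the $p$-image of the Siegel-type set $\overline A_P(s)\times N_P\times O_x\times W_{P,\tau}$, where the horospherical decomposition has been refined via \eqref{eqnBoundaryDecomposition} and $s$ is chosen large enough that all $\ga$-identifications already come from $\ga_P$ (as in the proof of Lemma~\ref{lemRBSDiscontinuous}). Lemma~\ref{lemStabilizersSatake} then converts $\gamma V\cap V\neq\emptyset$ into $\gamma\in\ga_x$.

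For Definition~\ref{defnDiscontinuous}\ref{itemDiscontinuousTwoPoints}, as in Lemma~\ref{lemRBSDiscontinuous} I would prove the equivalent Hausdorff property of $\ga\backslash{}_{\Q}\overline{X}_\infty^{\tau}$; compactness is then automatic as the continuous image of the compact space $\ga\backslash\overline{X}_\infty^{RBS}$. With $p'$ a quotient map, the strategy is to separate the fibers $p'^{-1}(y_i)$ of distinct points $y_1,y_2$ by disjoint open sets $W_i$ in the compact Hausdorff space $\ga\backslash\overline{X}_\infty^{RBS}$ and then extract saturated opens $U_i\ni y_i$ with $p'^{-1}(U_i)\subseteq W_i$, exactly as in the closing paragraph of Lemma~\ref{lemRBSDiscontinuous}.

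The principal obstacle is that, unlike the RBS case, the fibers of $p'$ need not be compact: the fiber over $\ga z\in\ga\backslash X_{P,\tau}$ equals $\ga_z\backslash p^{-1}(z)$, where $p^{-1}(z)$ is the disjoint union of slices $\{z\}\times W_{Q,\tau}$ indexed by parabolic $\Q$-subgroups $\mathbf Q$ with $\mathbf Q^\dag=\mathbf P^\dag$, and each $W_{Q,\tau}$ is a full symmetric space whose arithmetic quotient can be noncompact. To overcome this I would invoke reduction theory to reduce to finitely many $\ga$-orbits of such $\mathbf Q$, and then adapt Zucker's Siegel-set neighborhood analysis from \cite{Zu2} to produce the required saturated neighborhoods of the (possibly noncompact) fibers; this adaptation is the technical heart of the argument.
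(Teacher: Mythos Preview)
Your overall strategy---pushing the proof of Lemma~\ref{lemRBSDiscontinuous} forward along the equivariant quotient map $p'\colon \ga\backslash\overline{X}_\infty^{RBS}\to\ga\backslash{}_{\Q}\overline{X}_\infty^{\tau}$---is exactly what the paper does. The point of divergence is your final paragraph, where you claim the fibers of $p'$ need not be compact and propose to circumvent this via reduction theory and Zucker's Siegel-set analysis. This is a misreading of the fiber structure.

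You correctly identify $p^{-1}(z)$, for $z\in X_{P^\dag,\tau}$, as the union of the slices $\{z\}\times W_{Q,\tau}$ over parabolic $\Q$-subgroups $\mathbf Q$ with $\mathbf Q^\dag=\mathbf P^\dag$. What you miss is that these $\mathbf Q$ are in bijection with the parabolic $\Q$-subgroups of $\mathbf L_{\P^\dag,\tau}$, and that the corresponding $W_{Q,\tau}$ are precisely the boundary symmetric spaces appearing in the reductive Borel-Serre bordification of $W_{P^\dag,\tau}$. With the topology inherited from $\overline{X}_\infty^{RBS}$, the fiber $p^{-1}(z)$ is therefore not a naked disjoint union of noncompact symmetric spaces but rather $\overline{W_{P^\dag,\tau}}^{\,RBS}$ itself. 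The stabilizer $\ga_z$ contains $\ga_{P^\dag_\tau}$ (Lemma~\ref{lemStabilizersSatake}), whose image in $\mathbf L_{\P^\dag,\tau}(\Q)$ is arithmetic; hence the fiber of $p'$ over $\ga z$ is an arithmetic quotient of $\overline{W_{P^\dag,\tau}}^{\,RBS}$, which is compact by Lemma~\ref{lemRBSDiscontinuous} applied to the smaller group. This is the paper's one-sentence proof: ``the fibers of $p'$ are again compact, being reductive Borel-Serre compactifications of the $W_{P^\dag,\tau}$.'' Once you see this, the Hausdorff argument from Lemma~\ref{lemRBSDiscontinuous} goes through verbatim, and the extra machinery you propose is unnecessary.
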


The proof is similar to Lemma ~\ref{lemRBSDiscontinuous} since the fibers
of $p'$ are again compact, being reductive Borel-Serre compactifications of
the $W_{P^\dag,\tau}$.  The \emph{Satake compactification} of
$\ga\backslash X_\infty$ associated to $\tau$ is $\ga\backslash
{}_{\Q}\overline{X}_\infty^{\tau}$.

In the case when the representation $\tau$ is generic one obtains the
maximal Satake compactification $\overline{X}_\infty^{\max}$.  This is
always geometrically rational and the associated
${}_\Q\overline{X}_\infty^{\max}$ is very similar to
$\overline{X}_\infty^{RBS}$.  Indeed in this case $X_P = X_{P,\tau} \times
({}_\R A_{P}/A_{P})$, where ${}_\R A_{P}$ is defined like $A_P$ but using a
maximal $\R$-split torus instead of a maximal $\Q$-split torus, and the
quotient map simply collapses the Euclidean factor ${}_\R A_{P}/A_{P}$ to a
point.  In particular, if $\Q\text{-rank }\mathbf H = \R\text{-rank }
\mathbf H$, then $\ga\backslash {}_\Q\overline{X}_\infty^{\max} \cong
\ga\backslash\overline{X}_\infty^{RBS}$.

\section{The Bruhat-Tits buildings}
\label{sectBruhatTitsBuildings}

For a finite place $v$, let $k_{v}$ be the completion of $k$ with respect
to a norm associated with $v$. Bruhat and Tits \citelist{\cite{BruhatTits1}
  \cite{BruhatTits2}} constructed a building $X_{v}$ which reflects the
structure of $\G(k_{v})$.  The building $X_{v}$ is made up of subcomplexes
called \emph{apartments} corresponding to the maximal $k_{v}$-split tori in
$\G$ and which are glued together by the action of $\G(k_{v})$.  We give an
outline of the construction here together with the properties of $X_{v}$
which are needed in the sections below; in addition to the original papers,
we benefited greatly from
\citelist{\cite{ji}*{\S3.2}\cite{Landvogt}\cite{Tits}}.

In this section we fix a finite place $v$ and a corresponding discrete
valuation $\omega$.

\subsection{The apartment}

Let $\Split$ be a maximal $k_{v}$-split torus in $\G$ and let
$X^{*}(\Split)= \Hom_{k_{v}}(\Split, \G_{m})$ and $X_{*}(\Split)
=\Hom_{k_{v}}(\G_{m}, \Split)$ denote the $k_v$-rational characters and
cocharacters of $\Split$ respectively.  Denote by $\Phi \subset
X^{*}(\Split)$ the set of $k_{v}$-roots of $\G$ with respect to
$\Split$. Let $\N$ and $\Cent$ denote the normalizer and the centralizer,
respectively, of $\Split$; set $N=\N(k_{v})$, $Z=\Cent(k_{v})$. The  Weyl
group $W =
N/Z$ of $\Phi$ acts on the real vector space
\begin{equation*}
V = X_{*}(\Split) \otimes_{\Z}\R = \Hom_{\Z}(X^{*}(\Split) , \R)
\end{equation*}
by linear transformations; for $\alpha\in\Phi$, let $r_\alpha$ denote the
corresponding reflection of $V$.

Let $A$ be the affine space underlying $V$ and let $\Aff(A)$ denote the
group of invertible affine transformations.  We identify $V$ with the
translation subgroup of $\Aff(A)$.  There is an action of $Z$ on $A$ via
translations, $\nu\colon Z\rightarrow V \subset \Aff(A)$, determined by
\begin{equation*}
\chi(\nu(t)) = -\omega(\chi(t))\ , \quad t\in Z,\ \chi\in X^{*}(\Cent)\ ;
\end{equation*}
note that $V = \Hom_{\Z}(X^{*}(\Cent), \R)$ since
$X^{*}(\Cent) \subseteq X^{*}(\Split)$ is a finite index subgroup. 

We now extend $\nu$ to an action of $N$ by affine transformations.  Let $H
= \ker\nu$, which is the maximal compact subgroup of $Z$. Then $Z/H$ is a
free abelian group with rank $= \dim_\R V = k_{v}\rank \G$.  The group $W'
= N/H$ is an extension of $W$ by $Z/H$ and there exists an affine action of
$W'$ on $A$ which makes the following diagram commute
\cite{Landvogt}*{1.6}:
\begin{equation*}
\begin{CD}
1 @>>> Z/H @>>> W' @>>> W @>>> 1 \\
@. @VVV @VVV @VVV \\
1 @>>> V @>>> \Aff(A) @>>> \mathrm{GL}(V) @>>> 1\rlap{\ .}
\end{CD}
\end{equation*}
The action of $W'$ lifts to the desired extension $\nu\colon N \to \Aff(A)$.

For each $\alpha \in \Phi$, let $U_{\alpha}$ be the $k_v$-rational points
of the connected unipotent subgroup of $\G$ which has Lie algebra spanned
by the root spaces $\mathfrak g_\alpha$ and (if $2\alpha$ is a root)
$\mathfrak g_{2\alpha}$.  For $u\in U_{\alpha}\setminus \{1\}$, let $m(u)$
be the unique element of $N\cap U_{-\alpha}uU_{-\alpha}$
\cite{Landvogt}*{0.19}; in $\SL_2$, for example,
$m\left(\left(\begin{smallmatrix} 1 & x \\ 0\vphantom{x^{-1}} &
  1 \end{smallmatrix}\right)\right) = \left(\begin{smallmatrix} 0 & x
  \\ -x^{-1} & 0 \end{smallmatrix}\right)$.  The element $m(u) \in N$ acts
on $A$ by an affine reflection $\nu(m(u))$ whose associated linear
transformation is $r_\alpha$.  The hyperplanes fixed by these affine
reflections for all $\alpha$ and $u$ are the \emph{walls} of $A$.  The
connected components of the complement of the union of the walls are called
the \emph{chambers} of $A$; since we assume $\G$ is almost simple, these
are (open) simplices.  A \emph{face} of $A$ is an open face of a
chamber.  The affine space $A$ is thus a simplicial complex (with the open
simplices being faces) and the action of $N$ is simplicial.

For convenience we identify $A$ with $V$ by choosing a ``zero'' point $o\in
A$.  For $\alpha \in \Phi$, define $\phi_\alpha\colon U_{\alpha} \to \R
\cup \{\infty\}$ by setting $\phi_\alpha(1)=\infty$ and requiring for
$u\neq 1$ that the function $x\mapsto \alpha(x) + \phi_\alpha(u)$ vanishes
on the wall fixed by $\nu(m(u))$.  For $\ell \in \R$, let
\begin{equation*}
U_{\alpha,\ell} = \{\, u\in U_{\alpha} \mid \phi_\alpha(u) \ge \ell\,\}\ .
\end{equation*}
These are compact open subgroups and define a decreasing exhaustive and
separated filtration of $U_{\alpha}$ which has ``jumps'' only for $\ell$ in
the discrete set $\phi_\alpha( U_{\alpha}\setminus \{1\})$.  The affine
function $\alpha + \ell$ is called an \emph{affine root} if for some $u\in
U_{\alpha}\setminus \{1\}$, $\ell = \phi_\alpha(u)$ and (if $2\alpha$ is a
root) $\phi_\alpha(u)= \sup \phi_\alpha(u U_{2\alpha})$; let
$r_{\alpha,\ell} = \nu(m(u))$ be the corresponding affine reflection.  Note
that the zero set of an affine root is a  wall of
$A$ and every wall of $A$ arises in this fashion.

Denote the set of affine roots by $\Phi\aff$; it is an \emph{affine root
  system} in the sense of \cite{Macdonald}.  The Weyl group $W\aff$ of the
affine root system $\Phi\aff$ is the group generated by $r_{\alpha,\ell}$
for $\alpha + \ell \in \Phi\aff$; it is an affine Weyl group in the sense
of \cite{Bourbaki}*{Ch.~VI, \S2} associated to a reduced root system (not
necessarily $\Phi$).  Since we assume $\G$ is simply connected, $W\aff =
\nu(N) \cong W'$.

The \emph{apartment} associated to $\Split$ consists of the affine
simplicial space $A$ together with the action of $N$, the affine root
system $\Phi\aff$, and the filtration of the root groups,
$(U_{\alpha,\ell})_{\substack{\alpha\in\Phi \\ \ell\in \R}}$.

\subsection{The building}
\label{ssectBuilding}

For $x\in A$, let $U_x$ be the group generated by $U_{\alpha,\ell}$ for all
$\alpha + \ell \in\Phi\aff$ such that $(\alpha + \ell)(x) \ge 0$.  The
\emph{building} of $\G$ over $k_v$ is defined \cite{BruhatTits1}*{(7.4.2)}
to be
\begin{equation*}
X_v =  (G\times A ) / \!\sim \ ,
\end{equation*}
where $(gnp,x) \sim (g, \nu(n)x)$ for all $n\in N$ and $p \in H U_x$.  We
identify $A$ with the subset of $X_v$ induced by $\{1\}\times A $.

The building $X_v$ has an action of $\G(k_v)$ induced by left
multiplication on the first factor of $G\times A$.  Under this action, $N$
acts on $A\subset X_v$ via $\nu$ and $U_{\alpha,\ell}$ fixes the points in
the half-space of $A$ defined by $\alpha +\ell\ge 0$.  The simplicial
structure on $A$ induces one on $X_v$ and the action of $\G(k_v)$ is
simplicial.  The subcomplex $gA\subset X_v$ may be identified canonically with the
apartment corresponding to the maximal split torus $g\Split g^{-1}$.

Choose an inner product on $V$ which is invariant under the Weyl group $W$;
the resulting metric on $A$ may be transferred to any apartment by using the
action of $\G(k_v)$.  These metrics fit together to give a well-defined
metric on $X_{v}$ which is invariant under $\G(k_{v})$
\cite{BruhatTits1}*{(7.4.20)} and complete \cite{BruhatTits1}*{(2.5.12)}.
Given two points $x$, $y\in X_v$, there exists an apartment $gA$ of $X_v$
containing them \cite{BruhatTits1}*{(7.4.18)}.  Since $gA$ is an affine
space we can connect $x$ and $y$ with a line segment, $t \mapsto tx +
(1-t)y$, $ t \in [0,1]$; this segment is independent of the choice of
apartment containing the two points and in fact is the unique geodesic
joining $x$ and $y$.

\begin{prop}[\cite{BruhatTits1}*{(7.4.20)}]
The mapping $t \mapsto tx + (1-t)y$ of $[0,1] \times X_{v} \times X_{v}
\rightarrow X_{v}$ is continuous and thus $X_{v}$ is contractible.
\end{prop}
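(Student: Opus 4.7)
The plan is to deduce continuity of the geodesic mapping from the convexity of the $\G(k_v)$-invariant metric on the Bruhat-Tits building, and then derive contractibility as an immediate corollary. The essential analytic input is the quadrilateral inequality
\begin{equation*}
d\bigl(tx+(1-t)y,\, tx'+(1-t)y'\bigr) \le t\,d(x,x') + (1-t)\,d(y,y'),\qquad t\in[0,1],
\end{equation*}
valid for all $x,x',y,y'\in X_v$. Inside a single apartment this is Euclidean convexity of the distance function, and on all of $X_v$ it is a manifestation of the $\CAT(0)$ property of the building, which is built into the definition of the metric via apartments and the compatibility of these metrics under $\G(k_v)$.

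Given this convexity, continuity of $\mu(t,x,y)=tx+(1-t)y$ at a point $(t_0,x_0,y_0)$ is a routine two-step estimate. Writing
\begin{equation*}
d\bigl(\mu(t,x,y),\,\mu(t_0,x_0,y_0)\bigr) \le d\bigl(\mu(t,x,y),\,\mu(t,x_0,y_0)\bigr) + d\bigl(\mu(t,x_0,y_0),\,\mu(t_0,x_0,y_0)\bigr),
\end{equation*}
the first summand is bounded by $t\,d(x,x_0)+(1-t)\,d(y,y_0)$ thanks to the convexity inequality and thus tends to $0$ as $(x,y)\to(x_0,y_0)$. The second summand involves only the variation in $t$ along a single geodesic between the fixed points $x_0, y_0$; since these lie in a common apartment, it reduces to the continuity of the affine parameterization of a segment in Euclidean space. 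Contractibility is then immediate: pick any basepoint $x_0\in X_v$ and define $H\colon [0,1]\times X_v\to X_v$ by $H(t,x)=tx_0+(1-t)x$. By what has just been shown $H$ is continuous, and since $H(0,x)=x$ while $H(1,x)=x_0$, it exhibits a deformation retraction of $X_v$ onto $\{x_0\}$.

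The main obstacle is the convexity inequality itself in the case that the two pairs $\{x,y\}$ and $\{x',y'\}$ are not contained in a common apartment, so that no single Euclidean picture contains both segments. The standard remedy is triangle comparison: given a triangle with vertices in $X_v$, one uses the two-point property that any two of its vertices lie in some apartment to estimate midpoint distances against those in a Euclidean comparison triangle, and iterates to obtain the full quadrilateral estimate. This is exactly the verification that $X_v$ is $\CAT(0)$ and is carried out in detail in \cite{BruhatTits1}*{(7.4.20)}; with it in place the continuity statement and contractibility follow from the formal arguments above.
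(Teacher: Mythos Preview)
Your argument is correct and is essentially the standard $\CAT(0)$ argument: metric convexity (the quadrilateral inequality) gives joint continuity of geodesic interpolation, and then a straight-line contraction to any basepoint exhibits contractibility. The paper itself gives no proof of this proposition at all --- it simply records the result with a citation to Bruhat--Tits (7.4.20) --- so there is nothing substantive to compare. Your approach matches the remark the paper makes immediately after the proposition, namely that $X_v$ is $\CAT(0)$.

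One small correction: in your final paragraph you attribute the $\CAT(0)$ verification to \cite{BruhatTits1}*{(7.4.20)}, but the paper cites \cite{BruhatTits1}*{(3.2.1)} for that; (7.4.20) is the location of the continuity statement itself. Also, the second summand in your estimate can be bounded directly as $|t-t_0|\,d(x_0,y_0)$ since geodesics have constant speed, so you need not invoke the Euclidean picture of an apartment there.
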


In fact it follows from \cite{BruhatTits1}*{(3.2.1)} that $X_v$ is a
$\CAT(0)$-space\,\footnote{Recall that a $\CAT(0)$-space is a metric space where
the distance between any two points is realized by a geodesic and every
geodesic triangle is thinner than the corresponding triangle of the same
side lengths in the Euclidean plane; see \cite{BH} for a comprehensive
discussion of $\CAT(0)$-spaces. Besides affine buildings such as $X_v$,
simply connected, non-positively curved Riemannian manifolds such as $X_\infty$ are $\CAT(0)$-spaces.}.

\subsection{Stabilizers}
\label{ssectStabilizersBuilding}

For $\Omega \subset X_{v}$, let $\G(k_{v})_{\Omega}$ be the fixing subgroup of $\Omega$) (see 2.1).  Suppose now
that $\Omega \subseteq A$ and set \begin{equation*}
U_{\Omega} = \langle \, U_{\alpha,\ell} \mid (\alpha+\ell)(\Omega)  \geq 0,\,
\alpha+\ell\in \Phi\aff\, \rangle\ .
\end{equation*}
Since $\G$ is simply connected and the valuation $\omega$ is discrete,
$\G(k_{v})_{\Omega} = HU_{\Omega}$ (see \cite{BruhatTits1}*{(7.1.10),
  (7.4.4)}).  In particular, the stabilizer of $x\in A$ is the compact
open subgroup $\G(k_{v})_x = HU_x$.

If $F$ is a face of $A$ and $x\in F$, then the set of affine roots which
are nonnegative at $x$ is independent of the choice of $x\in F$.  Thus
$\G(k_{v})_{F} = \G(k_{v})_x$.  Note that an element of $\G(k_v)$ which
stabilizes $F$ also fixes the barycenter $x_F$ of $F$; thus $\G(k_v)_{x_F}$ is
the stabilizer subgroup $\G(k_v)^F$of $F$.  
\begin{rem}
The stabilizer subgroups for the building
of $\SL_2$ (a tree) are calculated in \cite{SerreTrees}*{II, 1.3}.
\end{rem}

Let $\P$ be a parabolic $k_v$-subgroup which without loss of generality we
may assume contains the centralizer of $\Split$; let $\n_{\P}$ be its
unipotent radical.  Let $\Phi_P = \{\, \alpha \in \Phi \mid U_\alpha
\subseteq \n_{\P}(k_v) \, \}$ and set $E_P = \{\, v\in V \mid \alpha(v) \ge
0, \, \alpha \in \Phi_P\, \}$; note that $\Phi_P$ is contained in a positive
system of roots and hence $E_P$ is a cone with nonempty interior.

\begin{lem}
\label{lemUnipotentsHaveFixedPoints}
For  each $u \in \n_{\P}(k_v)$ there exists $x\in A$ such that  $x + E_P$ is
fixed pointwise by $u$.  In particular, $u$ belongs to a compact open subgroup.
\end{lem}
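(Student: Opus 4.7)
The plan is to reduce the statement to a finite collection of affine-root inequalities and solve them in the cone $E_P$.

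First I would use the standard structure theory to write any $u\in\n_{\P}(k_v)$ as a product $u=\prod_{\alpha\in\Phi_P}u_\alpha$ with $u_\alpha\in U_\alpha$, taken in some fixed order compatible with height. Since $\P$ contains the centralizer of $\Split$, the set $\Phi_P$ is the set of roots appearing in $\n_{\P}$; in particular $\Phi_P$ is contained in a positive system, and the product expression above exhausts $\n_{\P}(k_v)$. For each $\alpha$ with $u_\alpha\neq 1$ set $\ell_\alpha=\phi_\alpha(u_\alpha)$ (a finite real number, modulo the usual care when $2\alpha$ is also a root, in which case one replaces $\ell_\alpha$ by $\sup\phi_\alpha(u_\alpha U_{2\alpha})$ so that $\alpha+\ell_\alpha$ is genuinely an affine root). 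Then $u_\alpha\in U_{\alpha,\ell_\alpha}$, so by the discussion of §\ref{ssectBuilding}, $u_\alpha$ fixes pointwise the closed half-space $H_\alpha=\{y\in A\mid (\alpha+\ell_\alpha)(y)\ge 0\}$.

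Next I would find a point $x\in A$ with $(\alpha+\ell_\alpha)(x)\ge 0$ for every $\alpha\in\Phi_P$ with $u_\alpha\neq 1$. Since $\Phi_P$ lies in a positive system, the dual cone $E_P=\{v\in V\mid\alpha(v)\ge 0\text{ for all }\alpha\in\Phi_P\}$ has nonempty interior: pick $v_0$ in that interior, so $\alpha(v_0)>0$ for every $\alpha\in\Phi_P$. The conditions $\alpha(tv_0)\ge -\ell_\alpha$ are finite in number, so they are satisfied simultaneously once $t>0$ is large enough; set $x=o+tv_0$ for such a $t$ (where $o\in A$ is the base point chosen to identify $A$ with $V$). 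For any $v\in E_P$ and any $\alpha\in\Phi_P$ with $u_\alpha\neq 1$ one then has
\begin{equation*}
(\alpha+\ell_\alpha)(x+v)=\alpha(x)+\alpha(v)+\ell_\alpha\ \ge\ \alpha(x)+\ell_\alpha\ \ge\ 0,
\end{equation*}
since $\alpha(v)\ge 0$. Thus $x+E_P\subseteq H_\alpha$ for every relevant $\alpha$, so each $u_\alpha$ fixes $x+E_P$ pointwise, and therefore so does their product $u$.

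For the last sentence, the point $x$ is in particular fixed by $u$, and $\G(k_v)_x=HU_x$ is a compact open subgroup of $\G(k_v)$ by §\ref{ssectStabilizersBuilding}; hence $u\in HU_x$.

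The main obstacle is the first step, namely invoking the Bruhat–Tits product decomposition of $\n_{\P}(k_v)$ as an ordered product of root subgroups and the attendant bookkeeping when both $\alpha$ and $2\alpha$ are roots (so that the natural exponent of $u_\alpha$ in the filtration is $\sup\phi_\alpha(u_\alpha U_{2\alpha})$ rather than $\phi_\alpha(u_\alpha)$ itself); once that is in hand, the rest is a finite system of linear inequalities in the open cone $E_P$, which is straightforwardly solvable.
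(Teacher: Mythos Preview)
Your argument is correct and follows essentially the same idea as the paper's proof: both use that each $U_{\alpha,\ell}$ fixes the half-apartment $\{\alpha+\ell\ge 0\}$ and then solve a finite system of linear inequalities in the cone $E_P$. The one difference worth noting is that the paper bypasses precisely what you flag as the ``main obstacle'': rather than invoking an ordered product decomposition $u=\prod_{\alpha\in\Phi_P}u_\alpha$ with individual levels $\ell_\alpha$ (and the attendant $2\alpha$ bookkeeping), the paper simply observes that $\n_{\P}(k_v)$ is generated by the $U_\alpha$ for $\alpha\in\Phi_P$, so any fixed $u$ lies in the subgroup generated by $(U_{\alpha,\ell})_{\alpha\in\Phi_P}$ for a single common $\ell$. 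Since that whole subgroup fixes $\bigcap_\alpha\{\alpha+\ell\ge 0\}$ pointwise, one only needs $\alpha(x)\ge -\ell$ for all $\alpha\in\Phi_P$. This avoids any appeal to a normal-form product in $\n_{\P}(k_v)$ and makes the affine-root subtlety irrelevant, at the cost of nothing.
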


\begin{proof}
Since $\n_{\P}(k_v)$ is generated by $(U_\alpha)_{\alpha\in\Phi_P}$, there
exists $\ell\in \R$ such that $u$ belongs to the group generated by
$(U_{\alpha,\ell})_{\alpha\in\Phi_P}$.  Since $U_{\alpha,\ell}$ fixes the
points in the half-space of $A$ defined by $\alpha +\ell\ge 0$,
choosing $x\in A$ such that $\alpha(x) \ge -\ell$ for all $\alpha\in
\Phi_P$ suffices.
\end{proof}

\section{The reductive Borel-Serre and Satake compactifications: the
  $S$-arithmetic case}
\label{sectCompactificationsSArithmetic}

We now consider a general $S$-arithmetic subgroup $\ga$ and define a
contractible space $X=X_S$ on which $\ga$ acts properly.  If the $k$-rank
of $\G$ is positive, as we shall assume, $\ga\backslash X$ is noncompact
and it is important to compactify it.  Borel and Serre \cite{BS2} construct
$\olsp^{BS}$, the analogue of $\ga\backslash\overline{X}_\infty^{BS}$ from
\S\ref{ssectBSarith}, and use it to study the cohomological finiteness of
$S$-arithmetic subgroups.  In this section we recall their construction and
define several new compactifications of $\ga\backslash X$ analogous to
those in \S\ref{sectCompactificationsArithmetic}.

\subsection{\boldmath The space $\ga\backslash X$ associated to an
$S$-arithmetic group}

Let $S$ be a finite set of places of $k$ containing the infinite places
$S_\infty$ and let $S_f = S \setminus S_\infty$.  Define
\begin{equation*}
G =G_{\infty}\times \prod_{v\in S_{f}} \G(k_{v}),
\end{equation*}
which is a locally
compact group, and
\begin{equation*}
X =X_{\infty}\times \prod_{v\in S_{f}} X_{v}\ ,
\end{equation*}
where $X_v$ is the Bruhat-Tits building associated to $\G(k_v)$ as
described in \S\ref{sectBruhatTitsBuildings}.  If we need to make clear the
dependence on $S$, we write $X_S$.  $X$ is a locally compact
metric space under the distance function induced from the factors.  Since
each factor is contractible (see \S\ref{ssectBuilding}), the same is true for $X$.

The group $G$ acts isometrically on $X$.  We view $\G(k)\subset G$ under
the diagonal embedding.  Any $S$-arithmetic subgroup $\ga \subset \G(k)$ is
a discrete subgroup of $G$ and acts properly on $X$ \cite{BS2}*{(6.8)}.
It is known that the quotient $\ga\backslash X$ is compact if and
only if the $k$-rank of $\G$ is equal to 0.  In the following, we assume
that the $k$-rank of $\G$ over $k$ is positive.  Then for every $v\in
S_{f}$, the $k_{v}$-rank of $\G$ is also positive.

\subsection{The Borel-Serre compactification \cite{BS2}}
\label{ssectBorelSerreSarithmetic}
Define
\begin{equation*}
\oX^{BS} = \overline{X}_\infty^{BS}\times \prod_{v\in S_{f}}X_{v} \ ,
\end{equation*}
where $\overline{X}_\infty^{BS}$ is as in \S\ref{ssectBSarith}.  This space
is contractible and the action of $\G(k)$ on $X$ extends to a continuous
action on $\overline X^{BS}$.  The action of any $S$-arithmetic subgroup
$\ga$ on $\overline{X}^{BS}$ is proper \cite{BS2}*{(6.10)}.  When
$S_f=\emptyset$ this is proved in \cite{Borel-Serre} as mentioned in
\S\ref{ssectBSarith}; in general, the argument is by induction on $|S_f|$.
Using the barycentric subdivision of $X_v$, the key points are \cite{BS2}*{(6.8)}: 
\begin{enumerate}
\item The covering of $X_v$ by open stars $V(F)$ about the barycenters
  of faces $F$ satisfies
\begin{equation*}
\gamma V(F)\cap V(F) \neq \emptyset \quad \Longleftrightarrow \quad
\gamma\in\Gamma_{F} = \ga\cap \G(k_v)_F \text{ , and}
\end{equation*}
\item For any face $F \subset X_{v}$,  $\ga_F$ is an
  $(S\setminus\{v\})$-arithmetic subgroup and hence by induction acts
  properly on $\overline{X}_{S\setminus\{v\}}^{BS}$.
\end{enumerate}
Furthermore $\ga\backslash \oX^{BS}$ is compact Hausdorff
\cite{BS2}*{(6.10)} which follows inductively from
\begin{enumerate}[resume]
\item There are only finitely many $\ga$-orbits of simplices in $X_{v}$ for
  $v\in S_f$ and the quotient of  $\overline{X}^{BS}_\infty$ by an
  arithmetic subgroup is compact.
\end{enumerate}

\subsection{The reductive Borel-Serre compactification}
\label{subsectRBSSarith}
Define
\begin{equation*}
\oX^{RBS}= \overline{X}_\infty^{RBS}\times \prod_{v\in S_f}X_{v} \ .
\end{equation*}
There is a $\G(k)$-equivariant surjection $\overline{X}^{BS} \to
\overline{X}^{RBS}$ induced from the surjection  in \S\ref{subsectRBSarith}.
\begin{prop}
\label{propDiscontinuousRBS}
Any $S$-arithmetic subgroup $\ga$ of $\G(k)$ acts discontinuously on
$\oX^{RBS}$ with a compact Hausdorff quotient $\ga\backslash\oX^{RBS}$.
\end{prop}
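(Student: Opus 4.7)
The plan is to generalize Lemma~\ref{lemRBSDiscontinuous}, inducting on $|S_f|$ and exploiting the $\ga$-equivariant surjection $p\colon \oX^{BS}\to\oX^{RBS}$. The base case $|S_f|=0$ is Lemma~\ref{lemRBSDiscontinuous}. For the inductive step, write $S=S'\sqcup\{v\}$ with $v\in S_f$, so $\oX^{RBS}=\oX^{RBS}_{S'}\times X_v$. Compactness is then immediate: $p$ induces a continuous surjection $\ga\backslash\oX^{BS}\twoheadrightarrow\ga\backslash\oX^{RBS}$ whose domain is compact by \S\ref{ssectBorelSerreSarithmetic}.

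For discontinuity I would verify the two conditions of Definition~\ref{defnDiscontinuous}. Given $y=(y',y_v)\in\oX^{RBS}_{S'}\times X_v$, let $F$ be the face of $X_v$ containing $y_v$ and $V(F)\subset X_v$ its open star. By the Borel--Serre facts recalled in \S\ref{ssectBorelSerreSarithmetic}, $\gamma V(F)\cap V(F)\ne\emptyset$ iff $\gamma\in\ga_F$, where $\ga_F = \ga\cap\G(k_v)_F$ is $S'$-arithmetic. Applying the inductive hypothesis to $\ga_F$ acting on $\oX^{RBS}_{S'}$ produces a neighborhood $V'\ni y'$ satisfying condition~(ii) for $\ga_F$; the product $V=V'\times V(F)$ then verifies~(ii) for $\ga$ at $y$, since $V(F)$ cuts the relevant $\gamma$ down to $\ga_F$ and $V'$ cuts them further to $(\ga_F)_{y'}=\ga_y$. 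For condition~(i), equivalently the Hausdorffness of $\ga\backslash\oX^{RBS}$, I would follow Lemma~\ref{lemRBSDiscontinuous}: the induced map $\bar p\colon \ga\backslash\oX^{BS}\to\ga\backslash\oX^{RBS}$ is a quotient map from a compact Hausdorff space, and once its fibers are shown to be compact, the lifting-of-neighborhoods argument from the proof of Lemma~\ref{lemRBSDiscontinuous} carries over verbatim (the requisite relatively compact cover of a fiber is obtained by multiplying the cover produced there by the open stars $V(F)$ in the building factors).

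The main technical point is the compactness of the fibers of $\bar p$. Over a point whose infinite-place coordinate lies in a stratum $X_P$ and whose finite-place coordinates are $(y_v)_{v\in S_f}$, this fiber is $N_P$ modulo the translation action of the subgroup $\ga_0\subset \ga\cap\n_\P(k)$ consisting of those $\gamma$ whose $v$-adic image lies in $\G(k_v)_{y_v}$ for every $v\in S_f$. By the filtration structure of \S\ref{ssectStabilizersBuilding} (compare Lemma~\ref{lemUnipotentsHaveFixedPoints}), each $\G(k_v)_{y_v}\cap \n_\P(k_v)$ is an open compact subgroup of $\n_\P(k_v)$, so imposing these conditions on the $S$-arithmetic lattice $\ga\cap\n_\P(k)$ and projecting to the infinite factor yields an ordinary arithmetic subgroup of $N_P=\n_\P(\R)$; since every lattice in a simply connected nilpotent Lie group is uniform, the fiber is compact. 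Carefully justifying this reduction, in particular verifying that the intersection with the finite-place stabilizers leaves an arithmetic (rather than merely $S$-arithmetic) lattice in $N_P$, is the chief obstacle.
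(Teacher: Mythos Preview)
Your proposal is correct and shares the paper's overall strategy: induct on $|S_f|$ with base case Lemma~\ref{lemRBSDiscontinuous}, and for the inductive step use the open-star cover $\{V(F)\}$ of $X_v$ together with the fact that each $\Gamma_F$ is $S'$-arithmetic. This is exactly what the paper means by ``proved similarly to \S\ref{ssectBorelSerreSarithmetic}, replacing `proper' by `discontinuous'.''

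The one place you diverge is condition~(i) (Hausdorffness). You opt to generalize the compact-fiber argument of Lemma~\ref{lemRBSDiscontinuous} directly to the $S$-arithmetic setting, which forces you to analyze $N_P/\Gamma_0$ and check that $\Gamma_0$ is an arithmetic (hence cocompact) lattice in $N_P$. Your sketch of this is right---intersecting the $S$-arithmetic group $\Gamma_{N_P}$ with the compact opens $\G(k_v)_{y_v}\cap\n_\P(k_v)$ at each $v\in S_f$ drops you to an arithmetic subgroup, by the remark at the end of \S1.1---though strictly speaking the fiber may be a further quotient of $N_P/\Gamma_0$ (by elements of $\Gamma_P$ whose Levi image fixes $z$), which only helps. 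The paper's intended route is more economical: it carries Hausdorffness through the induction as well, assembling $\Gamma\backslash\overline{X}^{RBS}$ from the pieces $\Gamma_F\backslash(\overline{X}^{RBS}_{S'}\times V(F))$ exactly as in \cite{BS2}*{(6.8),(6.10)}, so that no separate fiber analysis is needed. Your approach buys a self-contained argument independent of the details of \cite{BS2}; the paper's buys brevity.
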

The proposition is proved similarly to the case of $\olsp^{BS}$ outlined in
\S\ref{ssectBorelSerreSarithmetic}; one replaces ``proper'' by
``discontinuous'' and begins the induction with Lemma
~\ref{lemRBSDiscontinuous}.  The space $\ga\backslash\oX^{RBS}$ is the
\emph{reductive Borel-Serre compactification} of $\ga\backslash X$.

\subsection{Satake compactifications}
\label{subsectSatakeSArith}
Let $(\tau,V)$ be a spherical representation of
$\operatorname{Res}_{k/\Q}\G$ as in \S\ref{subsectSatakeArith} and define
\begin{equation*}
{}_{\Q}\overline{X}^{\tau}=
{}_{\Q}\overline{X}_\infty^{\tau}
\times\prod_{v\in S_{f}} X_{v}\ .
\end{equation*}
There is a $\G(k)$-equivariant surjection $\overline{X}^{RBS} \to
{}_{\Q}\overline{X}^{\tau}$ induced by  $\overline{X}_\infty^{RBS} \to
{}_{\Q}\overline{X}_\infty^{\tau}$ from \S\ref{subsectSatakeArith}.

\begin{prop}
\label{propDiscontinuousSatake}
Assume that the Satake compactification $\overline{X}_\infty^{\tau}$ is
geometrically rational.  Then any $S$-arithmetic subgroup $\ga$ acts
discontinuously on ${}_{\Q}\overline{X}^{\tau}$ with a compact Hausdorff
quotient $\ga\backslash {}_{\Q}\overline{X}^{\tau}$.
\end{prop}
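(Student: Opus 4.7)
The plan is to prove the proposition by induction on $|S_f|$, in direct parallel with the argument for $\olsp^{BS}$ outlined in \S\ref{ssectBorelSerreSarithmetic} and for $\olsp^{RBS}$ in Proposition~\ref{propDiscontinuousRBS}, but combined with the Satake-specific features of Lemma~\ref{lemSatakeDiscontinuous}. The base case $|S_f|=0$ is precisely Lemma~\ref{lemSatakeDiscontinuous}, and since geometric rationality of $\overline{X}_\infty^{\tau}$ depends only on the pair $(\mathbf H,\tau)$ and not on $S$, the hypothesis persists throughout the induction.

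For the inductive step, fix $v\in S_f$ and write points of ${}_{\Q}\overline{X}^{\tau}\cong {}_{\Q}\overline{X}_{S\setminus\{v\}}^{\tau}\times X_v$ as $(y',x_v)$. From \S\ref{ssectBorelSerreSarithmetic} the two key inputs about the building remain available: the open stars $V(F)$ about the barycenters of faces $F\subset X_v$ satisfy $\gamma V(F)\cap V(F)\neq\emptyset \Leftrightarrow \gamma\in \ga_F$ (and $\gamma V(F)=V(F)$ in that case), and each stabilizer $\ga_F=\ga\cap\G(k_v)_F$ is an $(S\setminus\{v\})$-arithmetic subgroup. To verify Definition~\ref{defnDiscontinuous}\ref{itemDiscontinuousOnePoint}, let $F$ be the face containing $x_v$, apply the inductive hypothesis to $\ga_F$ acting on ${}_{\Q}\overline{X}_{S\setminus\{v\}}^{\tau}$ to produce a neighborhood $V'\ni y'$ with $\gamma V'\cap V'\neq\emptyset \Leftrightarrow \gamma\in(\ga_F)_{y'}$ (and $\gamma V'=V'$ then), and set $V=V'\times V(F)$. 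Any $\gamma\in\ga$ with $\gamma V\cap V\neq\emptyset$ is forced first into $\ga_F$ and then into $(\ga_F)_{y'}=\ga_y$, with $\gamma V=V$, as required.

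For Definition~\ref{defnDiscontinuous}\ref{itemDiscontinuousTwoPoints} together with Hausdorff and compactness of $\ga\backslash{}_{\Q}\overline{X}^{\tau}$, I would follow the strategy of the last paragraph of the proof of Lemma~\ref{lemRBSDiscontinuous}, applied to the $\ga$-equivariant surjection $p\colon \oX^{RBS}\to {}_{\Q}\overline{X}^{\tau}$ and the induced map $\bar p\colon \ga\backslash\oX^{RBS}\to \ga\backslash{}_{\Q}\overline{X}^{\tau}$. Proposition~\ref{propDiscontinuousRBS} gives that $\ga\backslash\oX^{RBS}$ is compact Hausdorff, so compactness of the image is automatic; the fibers of $p$ are products of reductive Borel-Serre compactifications of the $W_{P^\dag,\tau}$ with single points in the building factors, hence compact, so disjoint compact fibers of $\bar p$ can be separated by saturated open neighborhoods, yielding Hausdorff and hence \ref{itemDiscontinuousTwoPoints}. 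I expect the main obstacle to be not conceptual but bookkeeping: checking that the geometric rationality hypothesis — which is exactly what is needed to invoke Lemma~\ref{lemSatakeDiscontinuous} at the base and to identify the $\tau$-saturated boundary components cleanly — interacts correctly with the stabilizer descriptions (Lemma~\ref{lemStabilizersSatake}) at each inductive step, after which the machinery from the $\oX^{BS}$ and $\oX^{RBS}$ cases transports over directly because $\oX^{RBS}\to{}_{\Q}\overline{X}^{\tau}$ is the identity on all building factors.
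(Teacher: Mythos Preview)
Your proposal is correct and follows precisely the approach the paper intends. The paper does not write out a proof of Proposition~\ref{propDiscontinuousSatake} at all; it is left implicit that one argues exactly as for Proposition~\ref{propDiscontinuousRBS} (induction on $|S_f|$ via the open-star cover $V(F)$ of $X_v$ and the $(S\setminus\{v\})$-arithmetic stabilizers $\ga_F$), beginning the induction with Lemma~\ref{lemSatakeDiscontinuous} in place of Lemma~\ref{lemRBSDiscontinuous}, and this is just what you do. Your treatment of condition~\ref{itemDiscontinuousTwoPoints}, Hausdorffness, and compactness by passing through $\bar p\colon \ga\backslash\oX^{RBS}\to\ga\backslash{}_{\Q}\overline{X}^{\tau}$ with compact fibers is likewise exactly the mechanism the paper uses for Lemma~\ref{lemSatakeDiscontinuous} (and later in the proof of Theorem~\ref{thmMainArithmetic}), lifted to the $S$-arithmetic setting; since $p$ is the identity on the building factors this transports without change.
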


The compact quotient $\ga\backslash {}_{\Q}\overline{X}^{\tau}$ is
called the \emph{Satake compactification} associated with $(\tau,V)$.

\section{The fundamental group of the compactifications}
\label{sectFundGrpArithmetic}

In this section we state our main result, Theorem ~\ref{thmMainArithmetic},
which calculates the fundamental group of the reductive Borel-Serre and the
Satake compactifications of $\ga\backslash X$. 
The proof of Theorem ~\ref{thmMainArithmetic} is postponed to
\S\ref{sectProofArithmetic}.

Throughout we fix a spherical representation $(\tau,V)$ such that
$\overline{X}_\infty^{\tau}$ is geometrically rational.

In our situation of an $S$-arithmetic subgroup $\ga$ acting on $\oX^{RBS}$
and ${}_{\Q}\overline{X}^{\tau}$, we denote $\ga_{fix}$ by $\ga_{fix,RBS}$ and
$\ga_{fix,\tau}$ respectively (see section 2.1 for the definition of $\ga_{fix}$).  The main result of this paper is the
following theorem.

\begin{thm}
\label{thmMainArithmetic}
For any $S$-arithmetic subgroup $\ga$, there exists a commutative diagram
\begin{equation*}
\begin{CD}
\pi_{1}(\olX^{RBS}) @<\cong<<  \ga/\ga_{fix,RBS} \\
@VVV                       @VVV \\
\pi_{1}(\ga\backslash {}_{\Q}\overline{X}^{\tau}) @<\cong<<
\ga/\ga_{fix,\tau}
\end{CD}
\end{equation*}
where the horizontal maps are isomorphisms and the vertical maps are
surjections induced by the $\ga$-equivariant projection $\oX^{RBS} \to
{}_{\Q}\overline{X}^{\tau}$ and the inclusion $\ga_{fix,RBS} \subseteq
\ga_{fix,\tau}$.
\end{thm}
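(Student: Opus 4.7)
The plan is to apply a version of Armstrong's theorem due to Grosche~\cite{Gro}: if a discrete group $\ga$ acts discontinuously (in the sense of Definition~\ref{defnDiscontinuous}) on a simply connected, locally path-connected Hausdorff space $Y$ and the quotient $\ga\backslash Y$ is Hausdorff, then the natural map $\ga/\ga_{f}\to \pi_{1}(\ga\backslash Y)$ is an isomorphism, where $\ga_{f}$ is the normal subgroup generated by the stabilizers of all points of $Y$. Propositions~\ref{propDiscontinuousRBS} and~\ref{propDiscontinuousSatake} already supply discontinuity and the Hausdorff quotient condition for $Y=\oX^{RBS}$ and $Y={}_{\Q}\overline{X}^{\tau}$; local path-connectedness near each boundary stratum follows from the horospherical decomposition and direct inspection of the quotient topology. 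The crucial remaining input is simple connectedness of the bordifications.

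For $\oX_\infty^{RBS}$ I would deduce simple connectedness (in fact contractibility) from that of $\oX_\infty^{BS}$ by constructing an explicit deformation retraction onto $X_\infty$. On each corner $X_\infty(P)=\overline A_P\times N_P\times X_P$, scaling the $\overline A_P$-coordinate toward the interior gives a retraction onto $X_\infty$ that is trivial on $X_\infty$; since the collapse $\oX_\infty^{BS}\to\oX_\infty^{RBS}$ occurs only in the $N_P$-direction over each boundary stratum, these retractions descend to $\oX_\infty^{RBS}$. Patching the corner retractions consistently (using the partial ordering of parabolic $\Q$-subgroups, or a $\ga$-invariant partition of unity indexed by parabolic rank) gives a deformation retraction of $\oX_\infty^{RBS}$ onto the contractible $X_\infty$. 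In the $S$-arithmetic case, $\oX^{RBS}=\oX_\infty^{RBS}\times\prod_{v\in S_f}X_v$ is then a product of contractibles (each Bruhat-Tits factor being contractible by \S\ref{ssectBuilding}). For ${}_{\Q}\overline{X}^{\tau}$ the same strategy applies using the decomposition \eqref{eqnBoundaryDecomposition} and the fact that the rational boundary components are indexed by $\tau$-saturated parabolic $\Q$-subgroups; geometric rationality is precisely what is needed to assemble the Satake corners $\Q$-rationally.

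With simple connectedness in hand, Grosche's theorem produces the two horizontal isomorphisms of the diagram. The right vertical map is then forced by naturality: the $\ga$-equivariant projection $\oX^{RBS}\to{}_{\Q}\overline{X}^{\tau}$ sends the stabilizer of any point into the stabilizer of its image, so $\ga_{f,RBS}\subseteq\ga_{f,\tau}$, and the induced map on fundamental groups is the natural surjection $\ga/\ga_{f,RBS}\twoheadrightarrow\ga/\ga_{f,\tau}$; commutativity of the square is automatic from the naturality of the Grosche isomorphism.

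The hard part will be the simple-connectedness step. Unlike $\oX^{BS}$, the reductive Borel-Serre and Satake bordifications are not locally compact, and the canonical surjection $\oX^{BS}\to\oX^{RBS}$ is only a quotient map rather than a fibration; one must verify that the corner deformation retractions are continuous in the quotient topology and fit together across overlapping corners indexed by comparable parabolic $\Q$-subgroups. One must also check local path-connectedness at boundary points, where neighborhoods have the form of images under a collapse of an $N_P$-factor and so are not manifolds-with-corners. Once these topological points are settled, the application of Grosche's theorem and the verification of commutativity are formal.
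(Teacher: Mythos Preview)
Your statement of Grosche's theorem is not what Grosche actually proves. The hypothesis in \cite{Gro}*{Satz~5} (the paper's Proposition~\ref{propGrosche}\ref{itemGrosche}) is not ``discontinuous on a simply connected, locally path-connected space'' but rather ``discontinuous \emph{and admissible}'' in the sense of Definition~\ref{defiAdmissible}: every path in $\ga\backslash Y$ must lift, up to homotopy rel endpoints, to a path in $Y$ with prescribed initial point. Discontinuity alone gives no such lifting, and without it there is no argument that $\ga\to\pi_1(\ga\backslash Y)$ is surjective. The paper devotes Proposition~\ref{propAdmissibleNeatCase} to proving admissibility, and this works only for \emph{neat} $\ga$. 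For general $\ga$ the action on $\oX^{RBS}$ is neither proper (stabilizers are infinite) nor does one know admissibility directly; the paper instead factors $\oX^{RBS}\to E\ga'\backslash\oX^{RBS}\to\ga\backslash\oX^{RBS}$ for a neat normal $\ga'\subseteq\ga$ of finite index, shows the intermediate space is simply connected and \emph{locally compact}, and then applies Armstrong's proper version (Proposition~\ref{propGrosche}\ref{itemArmstrong}) to the residual action of the finite group $\ga/E\ga'$.

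Your approach to simple connectedness via a deformation retraction onto $X_\infty$ also runs into trouble. For $\oX^{RBS}_\infty$ a boundary point $z\in X_P$ carries no $N_P$-coordinate, so ``scaling $\overline A_P$ toward the interior'' does not define a continuous map into $X_\infty$; you would have to choose a section of $N_P\times X_P\to X_P$, and there is no canonical one compatible across corners. More seriously, for ${}_{\Q}\oX^{\tau}_\infty$ the paper explicitly remarks (after Proposition~\ref{propSimply}) that it is \emph{not known} whether the space is even weakly contractible. The paper's route is instead to show that the quotient maps from the contractible $\oX^{BS}_\infty$ are admissible over a suitable open cover and invoke Proposition~\ref{propAdmissibilityImpliesSC}; this yields simple connectedness without any global retraction.
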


The proof of the theorem will be given in \S\ref{sectProofArithmetic}.  In
the remainder of this section we will deduce from it more explicit
computations of the fundamental groups. To do this we first need to calculate
$\ga_{fix,RBS}$ and $\ga_{fix,\tau}$ which will require the information on
stabilizers from \S\S\ref{subsectRBSarith}, \ref{subsectSatakeArith}, and
\ref{ssectStabilizersBuilding}.

Let $\P$ be a parabolic $k$-subgroup $\P$ of $\G$.  The $S$-arithmetic
subgroup $\ga$ induces $S$-arithmetic subgroups $\ga_{P}=\ga\cap
\P(k)\subseteq \P(k)$, $\ga_{N_{P}} = \ga\cap \n_{\P}(k) \subseteq
\n_{\P}(k)$, and $\ga_{L_{P}} = \ga_{P}/\ga_{N_{P}} \subseteq \mathbf
L_{\P}(k)$, as well as $\ga_{P_\tau} = \ga\cap \P_\tau(k) \subseteq
\P_\tau(k)$ and $\ga_{H_{P,\tau}} = \ga_P / \ga_{P_\tau} \subseteq \mathbf
H_{\P, \tau}(k)$.

Let $E\Gamma\subseteq \ga$ be the subgroup generated by $\Gamma_{N_{P}}$
as $\P$ ranges over all parabolic $k$-subgroups  of $\G$. Since $\gamma
\n_{\P}\gamma^{-1}=\n_{\gamma \P\gamma^{-1}}$ for $\gamma \in \Gamma$,
$E\Gamma$ is clearly normal. Let $E_{\tau}\Gamma\subseteq \ga$ be the
subgroup generated by $\Gamma_{P_\tau} \cap \bigcap_{v\in S_f} K_v$ for
every $\tau$-saturated parabolic $k$-subgroup $\P$ of $\G$ and compact open
subgroups $K_v\subset \G(k_v)$.  As above, $E_{\tau}\Gamma$ is normal.
Since $\ga_{N_P}$ is generated by $\ga_{N_P} \cap \bigcap_{v\in S_f} K_v$
for various $K_v$ by Lemma~\ref{lemUnipotentsHaveFixedPoints}, it is easy
to see that $E\ga \subseteq E_\tau\ga$.

A subgroup $\ga\subset \mathbf G(k)$ is \emph{neat} if the subgroup of $\C$
generated by the eigenvalues of $\rho(\gamma)$ is torsion-free for any
$\gamma\in\ga$.  Here $\rho$ is a faithful representation $\G\to \GL_N$
defined over $k$ and the condition is independent of the choice of $\rho$.
Clearly any neat subgroup is torsion-free.  Any $S$-arithmetic subgroup has a
normal neat subgroup of finite index \cite{Borel}*{\S17.6}; the image of a
neat subgroup by a morphism of algebraic groups is neat
\cite{Borel}*{\S17.3}.

\begin{prop}
\label{propGammaFixedIsEGamma}
Let $\Gamma$ be an $S$-arithmetic subgroup. Then $E\Gamma \subseteq
\Gamma_{fix,RBS}$ and $E_{\tau}\Gamma \subseteq \Gamma_{fix,\tau}$.  If $\Gamma$
is neat then equality holds for both.
\end{prop}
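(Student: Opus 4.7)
For the forward inclusion $E\Gamma\subseteq\Gamma_{f,RBS}$ I will exhibit a fixed point in $\overline X^{RBS}$ for each generator $\gamma\in\Gamma_{N_P}$: by Lemma~\ref{lemStabilizersRBS}, $\n_\P(k)$ acts trivially on $X_P\subset\overline X_\infty^{RBS}$, so any $z_\infty\in X_P$ is fixed by $\gamma$; and at each $v\in S_f$, Lemma~\ref{lemUnipotentsHaveFixedPoints} supplies some $x_v\in X_v$ fixed by $\gamma\in\n_\P(k_v)$. Hence $\gamma\in\Gamma_{(z_\infty,(x_v))}\subseteq\Gamma_{f,RBS}$. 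The inclusion $E_\tau\Gamma\subseteq\Gamma_{f,\tau}$ is analogous, using Lemma~\ref{lemStabilizersSatake} to see that $\P_\tau(k)$ acts trivially on $X_{P,\tau}$, and using the Bruhat-Tits fixed-point theorem---every compact group action on a $\CAT(0)$-space has a fixed point---to produce an $x_v\in X_v$ fixed by $\gamma\in K_v$.

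For the converse, assume $\Gamma$ is neat; it will suffice to show every stabilizer $\Gamma_z$ is contained in $E\Gamma$ (resp.\ $E_\tau\Gamma$). If $z_\infty\in X_\infty$, then $\Gamma_z$ is the intersection of the discrete $\Gamma$ with the compact subgroup $G_{\infty,z_\infty}\times\prod_{v\in S_f}\G(k_v)_{x_v}$ of $G$, hence finite, hence trivial by neatness. If $z_\infty$ lies in a boundary stratum, Lemma~\ref{lemStabilizersRBS} yields (in the RBS case) an exact sequence
\[
1\longrightarrow\Gamma_{N_P}\cap\Gamma_z\longrightarrow\Gamma_z\longrightarrow\overline\Gamma_z\longrightarrow1
\]
with $\overline\Gamma_z$ the image of $\Gamma_z$ in $\ga_{L_P}$; analogously, Lemma~\ref{lemStabilizersSatake} yields in the Satake case a sequence with kernel $\Gamma_{P_\tau}\cap\Gamma_z$ and quotient $\overline\Gamma_z\subseteq\ga_{H_{P,\tau}}$. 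In either case the elements $\overline\gamma\in\overline\Gamma_z$ have infinite-place component stabilizing $z_\infty$ and, at each $v\in S_f$, $v$-component in the compact image of $\G(k_v)_{x_v}\cap\P(k_v)$ under the Levi (resp.\ $\mathbf H_{\P,\tau}$) quotient. I will show $\overline\Gamma_z=1$, which then gives $\Gamma_z\subseteq\Gamma_{N_P}\subseteq E\Gamma$ in the RBS case, and $\Gamma_z\subseteq\Gamma_{P_\tau}\cap\bigcap_v\G(k_v)_{x_v}\subseteq E_\tau\Gamma$ in the Satake case.

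In the Satake setting $\mathbf H_{\P,\tau}$ is semisimple, so the isotropy of $z_\infty$ in $\mathbf H_{\P,\tau}(\R)$ is compact; hence $\overline\Gamma_z$ already lies in a compact subset of $\prod_{v\in S}\mathbf H_{\P,\tau}(k_v)$, where $\ga_{H_{P,\tau}}$ is discrete and neat (the image of a neat subgroup under an algebraic morphism is neat), and so $\overline\Gamma_z=1$. The main obstacle will be the RBS case, where the isotropy of $z_\infty$ in $L_P$ has the form $A_P\cdot K'$ for some compact $K'\subseteq M_P$ and contains the noncompact Euclidean factor $A_P$. I will resolve this by pushing down through the maximal $\Q$-split central toral quotient $\pi\colon\mathbf L_\P\to\mathbf T$: since $K'\subseteq M_P$ lies in $\ker\pi$, the image $\pi(\overline\Gamma_z)$ is an $S$-arithmetic subgroup of the $\Q$-split torus $\mathbf T$ whose infinite component lies in $\mathbf T(\R)^0$ and whose $v$-components for $v\in S_f$ lie in compact subgroups; the Dirichlet $S$-unit theorem forces this subgroup to be finite torsion, trivial by neatness of $\pi(\ga_{L_P})$. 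This reduces $\overline\Gamma_z$ to $\ker\pi$, where the isotropy of $z_\infty$ is compact, and the discreteness-plus-neatness argument used in the Satake case then gives $\overline\Gamma_z=1$.
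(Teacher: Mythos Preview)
Your argument is correct, but it takes a genuinely different route from the paper's proof. The paper proceeds by induction on $|S_f|$: the base case $S_f=\emptyset$ reduces immediately to the standard fact that an arithmetic subgroup $\Gamma_{L_P}$ (resp.\ $\Gamma_{H_{P,\tau}}$) acts \emph{properly} on $X_P$ (resp.\ $X_{P,\tau}$), so neat stabilizers are trivial; the inductive step peels off one finite place $v$, using that $\Gamma_F=\Gamma\cap\G(k_v)_F$ is an $(S\setminus\{v\})$-arithmetic subgroup for any face $F\subset X_v$, and that $\Gamma_{(x,y)}=\Gamma_{F,x}$ when $y\in F$.

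Your direct approach avoids induction entirely. For the forward inclusions this is cleaner than the paper's argument: you simply assemble a global fixed point from a boundary point $z_\infty$ and Bruhat--Tits fixed points $x_v$. For the converse, however, you must compensate for not having reduced to the arithmetic case: since $\Gamma_{L_P}$ is only $S$-arithmetic, its action on $X_P$ alone is not proper, and you have to handle the noncompact $A_P$-factor by hand via the projection $\pi\colon\mathbf L_\P\to\mathbf T$ onto the maximal $\Q$-split toral quotient. This works because an element of $\mathbf T(\Q)$ that is integral at every finite place (automatic outside $S$ by $S$-arithmeticity, forced inside $S_f$ by your compactness constraint) and lies in $\mathbf T(\R)^0$ must be trivial, $\mathbf T$ being $\Q$-split. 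The paper's induction hides this step inside the well-known properness of arithmetic actions on $X_P$; your version makes it explicit.

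One small point: your assertion that $\mathbf H_{\P,\tau}$ is semisimple is stronger than what the paper records, but the conclusion you actually use---that the isotropy of $z_\infty$ in $\mathbf H_{\P,\tau}(\R)$ is compact---follows directly from the paper's defining property that $\mathrm{Cent}(X_{P,\tau})/L_{P,\tau}$ is compact, so no harm is done. Also, calling $\pi(\overline\Gamma_z)$ ``an $S$-arithmetic subgroup'' overstates things (it is merely contained in one), and ``Dirichlet $S$-unit theorem'' is not quite the right name for the fact that $\Q$-split tori have finite arithmetic subgroups; but neither affects the validity of the argument.
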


\begin{proof}
We proceed by induction on $\vert S_{f}\vert$. Suppose first that
$\vert S_{f}\vert=0$.  By Lemma ~\ref{lemStabilizersRBS}, $\Gamma_{N_P}$
stabilizes every point of $X_{P} \subseteq \overline{X}^{RBS}_\infty$ for any
parabolic $k$-subgroup $\P$, and hence $E\Gamma \subseteq \Gamma_{fix,RBS}$.
Likewise by Lemma ~\ref{lemStabilizersSatake}, $\ga_{P_\tau}$ stabilizes
every point of $X_{P,\tau} \subset {}_{\Q}\overline{X}^{\tau}_\infty$ and so
$E_{\tau}\Gamma \subseteq \Gamma_{fix,\tau}$.

If $\Gamma$ is neat, then $\Gamma_{L_{P}}$ and $\Gamma_{H_{P,\tau}}$ are
likewise neat and hence torsion-free.  The actions of $\ga_{L_P}$ and
$\ga_{H_{P,\tau}}$ are proper and hence $\Gamma_{L_{P},z}$ and
$\Gamma_{H_{P,\tau},z}$ are finite.  Thus these stabilizer subgroups must
be trivial.  It follows then from Lemmas ~\ref{lemStabilizersRBS} and
\ref{lemStabilizersSatake} that $E\Gamma = \Gamma_{fix,RBS}$ and
$E_{\tau}\Gamma = \Gamma_{fix,\tau}$.

Now suppose that $\vert S_{f}\vert > 0$, pick $v \in S_{f}$ and let $S' = S \setminus\{v\}$.  Write
$\overline{X}^{RBS} = \overline{X}_{S'}^{RBS} \times X_v$.  Suppose that
$\gamma\in \ga_{N_P}$ for some parabolic $k$-subgroup $\P$.  By Lemma
~\ref{lemUnipotentsHaveFixedPoints}, $\gamma \in \G(k_v)_y$ for some $y\in
X_{v}$.  Thus $\gamma \in \ga' \cap \n_{\P}(k)$, where $\ga' = \ga\cap
\G(k_v)_y$.  Since $\G(k_v)_y$ is a compact open subgroup, $\ga'$ is an
$S'$-arithmetic subgroup.  Since $\vert S'_{f}\vert < \vert S_{f}\vert$, we can apply
induction to see that $\gamma = \gamma_1 \dots \gamma_m$,
where $\gamma_i\in\ga'_{x_i}$ with $x_i\in \overline{X}_{S'}^{RBS}$.  Since
each $\gamma_i\in \Gamma_{(x_i,y)} \subset \Gamma_{fix,RBS}$, we see $E\Gamma
\subseteq \Gamma_{fix,RBS}$.  The proof that $E_{\tau}\Gamma \subseteq
\Gamma_{fix,\tau}$ is similar since if $\gamma \in \Gamma_{P_\tau} \cap
\bigcap_{v\in S_f} K_v$ then $\gamma \in \G(k_v)_y$ for some $y\in X_v$
\cite{BruhatTits1}*{(3.2.4)}.

Assume that $\Gamma$ is neat.  Let $(x,y) \in \overline{X}_{S'}^{RBS}\times
X_{v}$, and let  $F$ be the face of $X_v$ containing $y$.  As above,
$ \ga_F = \ga\cap \G(k_v)_F$ is $S'$-arithmetic and, in this case, neat.
So by induction, $\Gamma_{F,x} \subseteq E(\ga_F)\subseteq E\Gamma$.  But
since $\G(k_{v})_{y}=\G(k_{v})_{F}$, $\Gamma_{(x,y)} =
\Gamma_{F,x}$. Therefore $\Gamma_{fix,RBS} \subseteq E\Gamma $. A similar
argument shows that $\Gamma_{fix,\tau} \subseteq E_{\tau}\Gamma $.
\end{proof}

We now can deduce several corollaries of Theorem ~\ref{thmMainArithmetic}
and Proposition ~\ref{propGammaFixedIsEGamma}.

\begin{cor}
\label{corNeat}
$\pi_{1}(\olX^{RBS})$ is a quotient of $\ga/ E\ga$ and
$\pi_{1}(\ga\backslash {}_{\Q}\overline{X}^{\tau})$ is a quotient of
$\ga/ E_\tau\ga$.  If $\ga$ is  neat, then $\pi_{1}(\olX^{RBS}) \cong
\ga/ E\ga$ and $\pi_{1}(\ga\backslash
{}_{\Q}\overline{X}^{\tau}) \cong \ga/ E_\tau\ga$.
\end{cor}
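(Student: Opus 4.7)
The plan is to derive this corollary as an immediate combination of the two preceding results, Theorem~\ref{thmMainArithmetic} and Proposition~\ref{propGammaFixedIsEGamma}. There is essentially no new content to establish; the work has all been done, and one only needs to assemble the pieces.

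First I would invoke Theorem~\ref{thmMainArithmetic} to obtain the canonical isomorphisms
\begin{equation*}
\pi_{1}(\olX^{RBS}) \cong \ga/\ga_{f,RBS} \qquad \text{and} \qquad \pi_{1}(\ga\backslash {}_{\Q}\overline{X}^{\tau}) \cong \ga/\ga_{f,\tau}.
\end{equation*}
Next I would invoke Proposition~\ref{propGammaFixedIsEGamma}, which gives the inclusions $E\ga \subseteq \ga_{f,RBS}$ and $E_{\tau}\ga \subseteq \ga_{f,\tau}$. Since $E\ga$ and $\ga_{f,RBS}$ are both normal subgroups of $\ga$ (normality of $E\ga$ and $E_{\tau}\ga$ is noted in the discussion preceding Proposition~\ref{propGammaFixedIsEGamma}, and $\ga_{f,RBS}$, $\ga_{f,\tau}$ are normal by the definition of the fixed subgroup), the inclusion of normal subgroups induces a surjection of quotients
\begin{equation*}
\ga/E\ga \twoheadrightarrow \ga/\ga_{f,RBS} \cong \pi_{1}(\olX^{RBS}),
\end{equation*}
and likewise $\ga/E_{\tau}\ga \twoheadrightarrow \ga/\ga_{f,\tau} \cong \pi_{1}(\ga\backslash {}_{\Q}\overline{X}^{\tau})$. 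This proves the first assertion that the fundamental groups are quotients of $\ga/E\ga$ and $\ga/E_{\tau}\ga$ respectively.

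For the second assertion, when $\ga$ is neat Proposition~\ref{propGammaFixedIsEGamma} upgrades these inclusions to equalities $E\ga = \ga_{f,RBS}$ and $E_{\tau}\ga = \ga_{f,\tau}$. The surjections above then become isomorphisms, yielding
\begin{equation*}
\pi_{1}(\olX^{RBS}) \cong \ga/E\ga \qquad \text{and} \qquad \pi_{1}(\ga\backslash {}_{\Q}\overline{X}^{\tau}) \cong \ga/E_{\tau}\ga,
\end{equation*}
as required. There is no genuine obstacle here; the entire difficulty has been pushed into the proof of Theorem~\ref{thmMainArithmetic} (which is deferred to \S\ref{sectProofArithmetic}) and into Proposition~\ref{propGammaFixedIsEGamma}. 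The corollary itself is a two-line bookkeeping argument combining them.
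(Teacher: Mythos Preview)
Your proof is correct and matches the paper's approach exactly: the paper states this corollary without proof, noting only that it (together with the subsequent corollaries) is deduced from Theorem~\ref{thmMainArithmetic} and Proposition~\ref{propGammaFixedIsEGamma}. Your write-up spells out precisely this combination.
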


\begin{cor}
If $k\rank \G >0 $ and $S\rank \G \ge 2$, $\pi_{1}(\olX^{RBS})$ and
$\pi_{1}(\ga\backslash {}_{\Q}\overline{X}^{\tau})$  are finite.
\end{cor}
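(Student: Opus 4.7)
The plan is to combine Corollary~\ref{corNeat} with the fact, recalled in \S\ref{ssectElementaryMatrices}, that $E\ga(\La)$ is $S$-arithmetic under the given rank hypotheses. By Corollary~\ref{corNeat}, $\pi_1(\olX^{RBS})$ is a quotient of $\ga/E\ga$, and since $E\ga \subseteq E_\tau\ga$, the group $\ga/E_\tau\ga$ (hence also $\pi_1(\ga\backslash {}_{\Q}\overline{X}^{\tau})$) is a quotient of $\ga/E\ga$. Thus it suffices to prove that $\ga/E\ga$ is finite whenever $k\rank \G > 0$ and $S\rank \G \ge 2$.

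To see this, I would invoke the two results quoted from Margulis and Raghunathan in \S\ref{ssectElementaryMatrices}: under the stated rank hypotheses, for every nonzero ideal $\La \subseteq \OO$ the group $E\ga(\La)$ is $S$-arithmetic, and any $S$-arithmetic subgroup of $\G(k)$ contains $E\ga(\La)$ for some $\La \neq 0$. Apply the second fact to our $\ga$ to obtain an ideal $\La$ with $E\ga(\La) \subseteq \ga$.

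Now observe that every generator of $E\ga(\La)$ lies in $\ga$ (by the inclusion just obtained) and also lies in the unipotent radical of some parabolic $k$-subgroup of $\G$ (by definition of $E\ga(\La)$). Hence these generators lie in $E\ga$, giving the chain
\begin{equation*}
E\ga(\La) \ \subseteq\ E\ga \ \subseteq\ \ga.
\end{equation*}
Since both $E\ga(\La)$ and $\ga$ are $S$-arithmetic, they are commensurable; combined with the containment $E\ga(\La) \subseteq \ga$ this forces $[\ga : E\ga(\La)] < \infty$. Therefore $[\ga : E\ga] < \infty$ as well, so $\ga/E\ga$ is finite, and the corollary follows.

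There is no genuine obstacle here beyond correctly assembling the pieces: the topological content is entirely in Theorem~\ref{thmMainArithmetic} and Proposition~\ref{propGammaFixedIsEGamma}, while the finiteness is a direct consequence of the deep $S$-arithmeticity theorem for $E\ga(\La)$ cited from the literature. The only mild subtlety is noting that one does not need $\ga$ to be neat for the quotient argument, since Corollary~\ref{corNeat} already gives surjections $\ga/E\ga \twoheadrightarrow \pi_1(\olX^{RBS})$ and $\ga/E_\tau\ga \twoheadrightarrow \pi_1(\ga\backslash {}_{\Q}\overline{X}^{\tau})$ in full generality.
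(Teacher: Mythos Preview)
Your proof is correct and follows essentially the same approach as the paper: both reduce to the finiteness of $\ga/E\ga$ via Corollary~\ref{corNeat} and then invoke the Margulis--Raghunathan $S$-arithmeticity of $E\ga$. The paper simply asserts ``$E\ga$ is $S$-arithmetic'' with the same citations, while you spell out the sandwich $E\ga(\La) \subseteq E\ga \subseteq \ga$ to deduce finite index; this is a welcome clarification but not a different argument.
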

\begin{proof}
Under the rank assumptions, $E\ga$ is $S$-arithmetic
\citelist{\cite{Margulis} \cite{Ra2}*{Theorem~ A, Corollary~ 1}}.
\end{proof}

\section{Proof of the main theorem}
\label{sectProofArithmetic}
In this section we give the proof of Theorem ~\ref{thmMainArithmetic}. If the two
spaces $\overline{X}^{RBS}$ and ${}_{\Q}\overline{X}^{\tau}$ were simply connected,
and if the actions of $\Gamma$ on them were free, then the path-lifting property of
covering spaces would make computing the fundamental groups of the quotient spaces
elementary. However, the actions of $\Gamma$ are not free; in fact Lemmas  ~\ref{lemStabilizersRBS}
and \ref{lemStabilizersSatake} show that
the  $\Gamma$-stabilizers of points in these spaces may not be finite. Nonetheless, if  $\Gamma$ is neat
the quotient maps do satisfy a weaker path-lifting property, \emph{admissibility} (Proposition ~\ref{propAdmissibleNeatCase}).
This was introduced by Grosche in \cite{Gro}. Provided that the spaces are simply connected,
this property makes it possible to compute their fundamental  groups (\cite{Gro}*{Satz~5}). 

To show that $\overline{X}^{RBS}$ and ${}_{\Q}\overline{X}^{\tau}$ are simply connected, we
prove that the natural surjections of $\overline{X}^{BS}$ onto each of them also have this path-lifting property
(Proposition ~\ref{propSimply}). Since $\overline{X}^{BS}$ is contractible it follows then that the two spaces are simply connected (Lemma~\ref{propAdmissibilityImpliesSC}).

To prove that the quotient maps are admissible, we use the fact that admissibility is a local property (Lemma~\ref{lemAdmissibilityIsLocal}). So we can cut a path into finitely many pieces, each of which lies in a ``nice'' neighborhood. We then push each such segment out to the boundary where the geometry
is simpler. This technique is formalized in Lemma ~\ref{lemAdmissibilityViaRetract}. The construction of suitable neighborhoods is via reduction theory.

It is easy to visualize this in the case where
$\R\rank \G =1$ and $\Gamma$ is arithmetic. Then $\overline{X}^{RBS} = X \cup \{\rm cusps\}$.
Each cusp has horospherical neighborhoods which retract onto it. Suppose we have a path
$\omega$ in $\overline{X}^{RBS}$. Let $y \in \overline{X}^{RBS}$ be a cusp and
$U \subset \overline{X}^{RBS}$ be a horospherical neighborhood  of $y$ such that
$\omega|_{[t_0,t_1]}$ lies in $U$ with $x_0=\omega(t_0), x_1=\omega(t_1) \in X\cap U$.
Let $r_t$ be the deformation retraction of $U$ onto $y$ and set $\sigma_i(t)=r_t(x_i)$, $i=0,1$. Then $\omega|_{[t_0,t_1]} 
\cong\sigma_0^{-1} \cdot \sigma_1 \ \rm rel\{x_0, x_1\}$. Deforming paths in this way
makes it easy to see that $\overline{X}^{RBS}$ is simply connected and to compute the fundamental
group of $\Gamma\backslash\overline{X}^{RBS}$.

Throughout homotopy of paths $\omega$ and $\eta$ will always mean homotopy relative to
the endpoints and will be denoted $\omega \cong \eta$.  An action of a
topological group $\ga$ on a topological space $Y$ will always be a
continuous action.

\begin{defi}
\label{defiAdmissible}(cf. \cite{Gro}*{Definition~3})
A continuous surjection $p\colon Y \to X$ of topological spaces is
\emph{admissible} if for any path $\omega$ in $X$ with initial point $x_0$
and final point $x_1$ and for any $y_0\in p^{-1}(x_0)$, there exists a path
$\tilde{\omega}$ in $Y$ starting at $y_0$ and ending at some $y_1\in p^{-1}(x_1)$
such that $p\circ \tilde \omega$ is homotopic to
$\omega$ relative to the endpoints. An action of a group $\ga$
on a topological space $Y$ is \emph{admissible} if the quotient map $Y\to
\ga\backslash Y$ is admissible.
\end{defi}

\begin{prop}
\label{propGrosche}
Let $Y$ be a simply connected topological space and $\ga$ a discrete group
acting on $Y$.  Assume that either
\begin{enumerate}
\item\label{itemGrosche} the $\ga$-action is discontinuous and admissible,
  or that
\item\label{itemArmstrong} the $\ga$-action is proper and $Y$ is a locally
  compact metric space.
\end{enumerate}
Then the natural morphism $\ga \to \pi_{1}(\ga\backslash Y)$ induces an
isomorphism $\ga/\ga_{fix} \cong \pi_{1}(\ga\backslash Y)$.
\end{prop}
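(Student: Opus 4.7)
The plan is to construct a map $\phi\colon \ga \to \pi_1(\ga\backslash Y, x_0)$, check that it vanishes on $\ga_f$, and show the induced map $\overline\phi\colon \ga/\ga_f \to \pi_1(\ga\backslash Y, x_0)$ is an isomorphism. Fix a basepoint $y_0\in Y$ and let $x_0 = p(y_0)$ where $p\colon Y\to \ga\backslash Y$ is the quotient. For each $\gamma\in\ga$ choose a path $\tilde\omega_\gamma$ from $y_0$ to $\gamma y_0$ in $Y$ (available by path connectivity), and set $\phi(\gamma) = [p\circ\tilde\omega_\gamma]$. Any two such paths are homotopic rel endpoints because $Y$ is simply connected, so $\phi$ is well defined, and concatenating $\tilde\omega_{\gamma_1}$ with $\gamma_1\tilde\omega_{\gamma_2}$ shows $\phi$ is a homomorphism. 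If $\gamma\in\ga_y$ and $\alpha$ is a path from $y_0$ to $y$, then $\alpha\cdot\overline{\gamma\alpha}$ is a path from $y_0$ to $\gamma y_0$ whose image in $\ga\backslash Y$ is $p\alpha\cdot\overline{p\alpha}$, hence null-homotopic; so $\ga_f\subseteq\ker\phi$ and $\phi$ descends to $\overline\phi$.

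To prove $\overline\phi$ is surjective, I would lift an arbitrary loop $\omega$ at $x_0$ to a path in $Y$ starting at $y_0$. In case~\ref{itemGrosche} this is exactly admissibility. In case~\ref{itemArmstrong} the lift is built piecewise from the slice theorem for proper actions on locally compact Hausdorff spaces: around every $y\in Y$ there is a $\ga_y$-invariant neighborhood $V$ with $\gamma V\cap V =\emptyset$ for $\gamma\notin\ga_y$, so that $p$ restricts to a quotient map $V\to\ga_y\backslash V$, and the compact image of $\omega$ is covered by finitely many such $p(V)$. In either case the endpoint of the lift lies in $p^{-1}(x_0)$, hence equals $\gamma y_0$ for some $\gamma$, giving $[\omega] = \phi(\gamma)$.

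The serious step is injectivity of $\overline\phi$. Suppose $\phi(\gamma)=1$, so there is a null-homotopy $H\colon [0,1]^2\to \ga\backslash Y$ of $p\circ\tilde\omega_\gamma$. The slice neighborhoods above (supplied in case~\ref{itemGrosche} by Definition~\ref{defnDiscontinuous}\ref{itemDiscontinuousOnePoint}) together with the Lebesgue covering lemma furnish a triangulation of the square so fine that each closed triangle maps into some $p(V_T)$. I would then lift the triangles to $Y$ one at a time, beginning with a lift based at $y_0$, using the slice property to pin down each new lift once its starting vertex has been placed. Where two adjacent triangle lifts meet along a shared edge, any discrepancy must be absorbed by a $\ga$-element fixing the lift of the shared vertex, because $\gamma V = V$ for $\gamma\in\ga_y$ while $\gamma V\cap V=\emptyset$ otherwise. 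Tracing the lift around the boundary of the square therefore exhibits $\gamma$ as a product of such vertex-stabilizer elements, placing it in $\ga_f$. The main obstacle is precisely this combinatorial bookkeeping: one must verify that every edge discrepancy genuinely falls into a stabilizer subgroup rather than an arbitrary element of $\ga$, which is the content of the full force of Definition~\ref{defnDiscontinuous}\ref{itemDiscontinuousOnePoint} (or of the slice theorem for proper actions), not merely of discontinuity.
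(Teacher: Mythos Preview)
The paper itself does not give a proof here; it simply cites Grosche's Satz~5 for case~\ref{itemGrosche} and Armstrong for case~\ref{itemArmstrong}. So you are attempting to reconstruct those arguments, and your construction of $\phi$, the check that $\ga_f\subseteq\ker\phi$, and surjectivity are all fine.

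The gap is in your injectivity argument for case~\ref{itemGrosche}. You propose to lift the null-homotopy $H$ by triangulating and lifting each $2$-simplex through a slice neighbourhood $V\to\ga_y\backslash V$. In case~\ref{itemArmstrong} this is roughly Armstrong's argument and works because $\ga_y$ is finite. But in case~\ref{itemGrosche} the stabilizer $\ga_y$ can be infinite, and the quotient $V\to\ga_y\backslash V$ has no lifting property for paths, let alone $2$-cells; discontinuity alone (Definition~\ref{defnDiscontinuous}\ref{itemDiscontinuousOnePoint}) does not supply one. You invoke admissibility only for surjectivity and then drop it for injectivity, which is precisely where it is needed.

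A route that does work in case~\ref{itemGrosche} avoids lifting $2$-cells altogether. Observe that $\ga/\ga_f$ acts freely on $\ga_f\backslash Y$, and the slice condition makes this action even, so $\ga_f\backslash Y\to\ga\backslash Y$ is a genuine regular covering with deck group $\ga/\ga_f$. It then suffices to show $\ga_f\backslash Y$ is simply connected. Given a loop $\omega'$ there, push it down to $\omega$ in $\ga\backslash Y$, use admissibility of $p$ to lift $\omega$ to a path $\tilde\omega$ in $Y$ from $y_0$ to $\gamma y_0$, and use the homotopy lifting property of the covering $p_2$ to see that $\omega'$ is homotopic to $p_1\circ\tilde\omega$ and hence that $\gamma\in\ga_f$. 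Writing $\gamma$ as a product of point-stabilizer elements then lets you replace $\tilde\omega$, up to homotopy in the simply connected $Y$, by a concatenation of paths of the form $\alpha_i\cdot\overline{\gamma_i\alpha_i}$, each of which projects to a null-homotopic loop in $\ga_f\backslash Y$. This uses admissibility together with covering-space homotopy lifting, and no lifting of $2$-simplices through infinite quotients.
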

\begin{proof}
See \cite{Gro}*{Satz~5} and \cite{Armstrong} for hypotheses
\ref{itemGrosche} and \ref{itemArmstrong} respectively .
\end{proof}

\begin{prop}
\label{propAdmissibilityImpliesSC}
Let $p\colon Y \to X$ be an admissible continuous map of a simply
connected topological space $Y$ and assume that $p^{-1}(x_0)$ is
path-connected for some $x_0\in X$.  Then $X$ is simply connected.
\end{prop}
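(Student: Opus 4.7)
The plan is to show directly that any loop in $X$ based at $x_0$ is nullhomotopic, using admissibility to lift it into $Y$ (up to homotopy) and then closing up the lift by a path in the fiber so that simple connectedness of $Y$ can be applied.

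More precisely, I would first let $\omega$ be any loop in $X$ based at $x_0$, and pick a basepoint $y_0 \in p^{-1}(x_0)$. By the definition of admissibility applied to $\omega$ (with $x_1 = x_0$) there is a path $\tilde{\omega}$ in $Y$ starting at $y_0$ and ending at some $y_1 \in p^{-1}(x_0)$, such that $p\circ \tilde{\omega}$ is homotopic to $\omega$ relative to endpoints. The lifted path need not be a loop, but its endpoint lies in the same fiber as its starting point, and this is precisely where the hypothesis on $p^{-1}(x_0)$ comes in.

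Next, since $p^{-1}(x_0)$ is path-connected, I would choose a path $\alpha$ in $p^{-1}(x_0)$ from $y_1$ back to $y_0$. The concatenation $\tilde{\omega}\cdot \alpha$ is then a loop in $Y$ based at $y_0$, and by the simple connectedness of $Y$ it is nullhomotopic. Applying $p$ and using that $p\circ\alpha$ is the constant path at $x_0$ (so acts as the identity for concatenation up to homotopy), one gets
\[
\omega \;\cong\; p\circ\tilde{\omega} \;\cong\; p\circ(\tilde{\omega}\cdot\alpha),
\]
and the right-hand side is nullhomotopic in $X$. Hence $\omega$ is nullhomotopic, so $\pi_1(X,x_0) = 1$.

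There is really no substantial obstacle here; the only thing to be careful about is that admissibility gives a lift of $\omega$ only up to homotopy downstairs, not a genuine lift, but this is harmless because the statement we need about $\omega$ is itself only up to homotopy. The argument is essentially a weakened version of the standard fact that a covering map with path-connected simply connected total space and path-connected fiber has simply connected base.
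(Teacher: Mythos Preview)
Your proof is correct and follows essentially the same approach as the paper: lift the loop up to homotopy via admissibility, close it up using a path in the fiber $p^{-1}(x_0)$, and use simple connectedness of $Y$ to conclude. The paper's argument differs only in notation (it calls the connecting path $\eta$ rather than $\alpha$).
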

\begin{proof}
Let $\omega\colon [0,1] \to X$ be a loop based at $x_0$ and let
$\tilde\omega$ be a path in $Y$ such that $p\circ \tilde\omega \cong
\omega$ (relative to the basepoint).  Let $\eta$ be a path in
$p^{-1}(x_0)$ from $\tilde\omega(1)$ to $\tilde\omega(0)$.  Then the
product $\tilde\omega\cdot \eta$ is a loop in the simply connected space
$Y$ and hence is null-homotopic.  It follows that $\omega\cong p\circ
\tilde \omega\cong p\circ(\tilde\omega\cdot \eta)$ is null-homotopic.
\end{proof}

\begin{lem}
\label{lemAdmissibilityIsLocal}
A continuous surjection $p\colon Y \to X$ of topological spaces is
admissible if and only if $X$ can be covered by open subsets $U$
such that $p|_{p^{-1}(U)}\colon p^{-1}(U) \to U$ is
admissible.
\end{lem}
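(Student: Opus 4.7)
The forward direction is immediate: if $p$ itself is admissible then the single-element cover $\{X\}$ satisfies the condition. For the converse, my plan is to argue by the standard compactness/concatenation scheme used when lifting paths against a local lifting property, the key input being a Lebesgue number applied to the pullback cover of $[0,1]$.

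So suppose $\{U_\alpha\}$ is an open cover of $X$ with each restriction $p|_{p^{-1}(U_\alpha)}$ admissible, and let $\omega\colon [0,1]\to X$ be a path from $x_0$ to $x_1$, together with a choice of $y_0\in p^{-1}(x_0)$. Choose a partition $0=t_0<t_1<\dots<t_n=1$ so fine that each restriction $\omega_i = \omega|_{[t_{i-1},t_i]}$ has image contained in some $U_{\alpha_i}$; this is possible by compactness of $[0,1]$ and the Lebesgue number lemma applied to the open cover $\{\omega^{-1}(U_\alpha)\}$. I will then construct the desired lift by induction on $i$. Assuming $\tilde\omega_1,\dots,\tilde\omega_{i-1}$ have been produced with $\tilde\omega_j$ a path in $p^{-1}(U_{\alpha_j})$ from some $y_{j-1}$ to some $y_j\in p^{-1}(\omega(t_j))$ and with $p\circ\tilde\omega_j\cong\omega_j$ rel endpoints, observe that $y_{i-1}\in p^{-1}(\omega(t_{i-1}))\subseteq p^{-1}(U_{\alpha_i})$, so the admissibility of $p|_{p^{-1}(U_{\alpha_i})}$ supplies the next lift $\tilde\omega_i$, ending at some $y_i\in p^{-1}(\omega(t_i))$.

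Finally I will set $\tilde\omega = \tilde\omega_1\cdot\tilde\omega_2\cdots\tilde\omega_n$, a path in $Y$ from $y_0$ to $y_n\in p^{-1}(x_1)$. Since homotopy rel endpoints is preserved under concatenation, the chain of local homotopies $p\circ\tilde\omega_i\cong\omega_i$ yields $p\circ\tilde\omega = (p\circ\tilde\omega_1)\cdots(p\circ\tilde\omega_n) \cong \omega_1\cdots\omega_n = \omega$ rel endpoints, proving admissibility of $p$. There is essentially no obstacle here beyond bookkeeping; the only subtlety to highlight carefully is that each successive starting point $y_{i-1}$ lies above $\omega(t_{i-1})\in U_{\alpha_i}$, which is precisely what allows the local admissibility hypothesis to be invoked at each step, and that the local homotopies, being rel endpoints in $U_{\alpha_i}$, are in particular rel endpoints in $X$ and so may be freely concatenated.
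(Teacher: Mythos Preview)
Your proof is correct and follows essentially the same approach as the paper: both use the Lebesgue covering lemma to break the path into finitely many subpaths, each landing in one of the open sets $U$, and then lift piece by piece using local admissibility. The paper's proof is a two-line sketch, while yours spells out the inductive bookkeeping and the concatenation of the local homotopies, but the underlying argument is identical.
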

\begin{proof}
By the Lebesgue covering lemma, any path $\omega\colon [0,1] \to X$ is
equal to the product of finitely many paths, each of which maps into
one of the subsets $U$.  The lemma easily follows.
\end{proof}

\begin{lem}
\label{lemAdmissibilityViaRetract}
Let $p\colon Y \to X$ be a continuous surjection of topological spaces.
Assume there exist deformation retractions $r_t$ of $X$ onto a subspace
$X_0$ and $\tilde r_t$ of $Y$ onto $Y_0 = p^{-1}(X_0)$ such that $p\circ
\tilde r_t = r_t \circ p$.  Also assume for all $x\in X$ that
$\pi_0(p^{-1}(x)) \xrightarrow{\tilde r_{0*}} \pi_0(p^{-1}(r_0(x)))$ is
surjective.  Then $p$ is admissible if and only if $p|_{Y_0}\colon Y_0\to
X_0$ is admissible.
\end{lem}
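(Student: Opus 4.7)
The plan is to prove admissibility is inherited in both directions, with the forward direction a direct application of $\tilde r_0$, and the backward direction requiring a path-concatenation argument that uses the deformation retractions to connect the lift produced on $Y_0$ back to the prescribed initial point $y_0$ and to a chosen endpoint in the correct fiber of $p$.

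For the forward direction, suppose $p$ is admissible. Given a path $\omega$ in $X_0$ from $x_0$ to $x_1$ and $y_0 \in p|_{Y_0}^{-1}(x_0)$, first lift $\omega$ to a path $\tilde\omega$ in $Y$ starting at $y_0$ with $p\circ\tilde\omega\cong\omega$ in $X$. Then $\tilde r_0\circ\tilde\omega$ is a path in $Y_0$; since $y_0\in Y_0$ is fixed by $\tilde r_0$, this new path still begins at $y_0$. Applying $r_0$ to a homotopy realizing $p\circ\tilde\omega\cong\omega$ and using $r_0\circ\omega=\omega$ (as $\omega$ lies in $X_0$) gives the needed homotopy in $X_0$ relative to endpoints.

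For the backward direction, suppose $p|_{Y_0}$ is admissible. Given $\omega\colon[0,1]\to X$ with $\omega(0)=x_0$, $\omega(1)=x_1$, and $y_0\in p^{-1}(x_0)$, set $\omega' = r_0\circ\omega$, a path in $X_0$. Lift $\omega'$ using admissibility on $Y_0$ to a path $\tilde\omega'$ in $Y_0$ starting at $\tilde r_0(y_0)$ and ending at some $y_1'\in p^{-1}(r_0(x_1))$. Next, invoke the surjectivity of $\tilde r_{0*}\colon \pi_0(p^{-1}(x_1)) \to \pi_0(p^{-1}(r_0(x_1)))$ to pick $y_1\in p^{-1}(x_1)$ together with a path $\sigma$ in $p^{-1}(r_0(x_1))\subseteq Y_0$ from $y_1'$ to $\tilde r_0(y_1)$. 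The homotopies $\tilde r_t$ supply paths $\eta(t)=\tilde r_{1-t}(y_0)$ from $y_0$ to $\tilde r_0(y_0)$ and $\eta'(t)=\tilde r_t(y_1)$ from $\tilde r_0(y_1)$ to $y_1$. The concatenation $\tilde\omega := \eta\cdot\tilde\omega'\cdot\sigma\cdot\eta'$ is the required lift in $Y$ from $y_0$ to $y_1$.

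It remains to check that $p\circ\tilde\omega$ is homotopic to $\omega$ in $X$. Since $p\circ\sigma$ is constant at $r_0(x_1)$, the projection reduces up to reparametrization to the concatenation of $t\mapsto r_{1-t}(x_0)$, $r_0\circ\omega$, and $t\mapsto r_t(x_1)$. The homotopy $H(s,t) = r_s(\omega(t))$ on the square $[0,1]\times[0,1]$ interpolates from $r_0\circ\omega$ at $s=0$ to $\omega$ at $s=1$, with the vertical edges tracing $s\mapsto r_s(x_0)$ and $s\mapsto r_s(x_1)$; the standard identification of the boundary of this square as a null-homotopic loop then yields the desired homotopy rel endpoints. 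The main obstacle is book-keeping the concatenations carefully so that the two retraction-collar paths at either end match with the boundary of $H$; everything else (functoriality of $r_t$ under $p$, fixedness of $Y_0$ under $\tilde r_0$) is immediate from the hypotheses.
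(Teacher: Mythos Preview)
Your proof is correct and follows essentially the same approach as the paper. The paper only writes out the backward direction (which is all that is used in the applications), constructing the lift as a four-fold concatenation $\tilde\sigma_0^{-1}\cdot\eta\cdot\psi\cdot\tilde\sigma_1$ that matches your $\eta\cdot\tilde\omega'\cdot\sigma\cdot\eta'$ up to renaming; your additional forward direction and the explicit square homotopy $H(s,t)=r_s(\omega(t))$ simply fill in details the paper leaves to the reader.
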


\setlength{\pinch}{.002128769252056923\textwidth}
\setlength{\mim}{2.85427559055181102\pinch}
\begin{figure}[h]
\begin{equation*}
\begin{xy}
%
<0\mim,-15\mim>;<3\mim,-15\mim>:
<24\mim,-3\mim>="c"+<0\mim,\g>="cdmid"+<0\mim,\g>="d",
(15,0)="adown";(0,10)="bdown" **[bordergrey]\crv{(10,5)&(5,8)}?(.3)="xdown",
?(.25)="main3",?(.6)="main6",
"adown"+"c";{"bdown"+"c"} **[verylightgrey]\crv{ (10,5)+"c" & (5,8)+"c"},
(15,10)="aup";(0,20)="bup" **[bordergrey]\crv{(10,15)&(5,18)}
?(.5)="xup",
"aup"+"d";{"bup"+"d"} **[bordergrey]\crv{"d"+(10,15) & "d"+(5,18)},
"adown",\blownupslice{bordergrey}{bordergrey},
"bdown",\blownupslice{verylightgrey}{bordergrey},
"main6",\blownupslice{verylightgrey}{verylightgrey},
"bot";p+<0\mim,42\mim>**\dir{}?(.65)*\dir{*}="y1"*+!L{_{y_1}}="f3",
?(.25)*\cir<1\pinch>{}*\frm{*}="f7",?(.85)*\cir<1\pinch>{}*\frm{*}="f1",
?(.75)*+{}="f2",?(.35)*+{}="f6",?(.45)*\cir<1\pinch>{}*\frm{*}="f5",
?(.55)*+{}="f4",?(.05)*+{}="f9",
"top"+<0\mim,\g>*++!DC\txt<20\mim>\tiny{$ p^{-1}(x_1)$ (marked by
  $\scriptscriptstyle \bullet$)\\$\downarrow$},
%
"main6";"upper" **[lightgrey]\dir{-}?(.5)="r0y1"*\dir{*}*+!RD{_{\tilde
    r_0(y_1)}},
?(.25)="eta1"*\dir{*}*+!UR{_{\eta(1)}},
"lower"-<0\mim,\g>*++!UC\txt<20\mim>\tiny{$\uparrow$\\$p^{-1}(r_0(x_1))$},
;"lower"**[lightgrey]\dir{--},
"upper";"upper"+<0\mim,\g>**[lightgrey]\dir{--},
"main3",\blownupslice{verylightgrey}{verylightgrey},
"main3";"bot" **\crv{~*\dir{} "ccp"},?(.8)="x0bot";p+<0\mim,37.5\mim>="x0top"
**\dir{}?(.4)="y0"*\dir{*}*+!L{_{y_0}},
"main3";"upper" **\dir{}?(.35)="r0y0"*\dir{*}*+!UR{_{\tilde r_0(y_0)}},
;"y0" **\dir{},?(.5)+/u2.25\pinch/="mcp",
"r0y0";"y0" **\crv{ "mcp"},?(.5)*+!U{_{\tilde\sigma_0}},*\dir{>},
"r0y0"+<0\pinch,.5\pinch>;"y0"+<0\pinch,.5\pinch> **\crv{ "mcp"+<0\pinch,.5\pinch>},?(.5)*\dir{>},
"r0y0";"eta1" **\dir{},?(.45)+/r3.75\pinch/="cp1",?(.55)+/l2\pinch/="cp2",
"eta1" **\crv{ "cp1" & "cp2" },?(.5)*+!UR{_\eta},*\dir{>},
"r0y0"+<.35\pinch,.35\pinch>;"eta1"+<.35\pinch,.35\pinch> **\crv{ "cp1"+<.35\pinch,.35\pinch> & "cp2"+<.35\pinch,.35\pinch> },?(.5)*\dir{>},
"eta1";"r0y1" **\dir{-},?(.5)*+!R{_{\psi}},*\dir{>},
"eta1"+<.5\pinch,0\pinch>;"r0y1"+<.5\pinch,0\pinch> **\dir{-},?(.5)*\dir{>},
"r0y1";"y1" **\dir{},?(.5)+/d4.5\pinch/="mcp",
"r0y1";"y1" **\crv{ "mcp"},?(.5)*+!U{_{\tilde\sigma_1}},*\dir{>},
"r0y1"+<0\pinch,.5\pinch>;"y1"+<0\pinch,.5\pinch> **\crv{ "mcp"+<0\pinch,.5\pinch>},?(.5)*\dir{>},
"adown"+<-2\mim,-5\mim>*{_{Y_0\quad\qquad\subseteq \quad\qquad Y}},
%
{<79\mim,15\mim> \ar _{p} @{>} <89\mim,15\mim>},
%
<90\mim,0\mim>;<93\mim,0\mim>:
<24\mim,-3\mim>="c"+<0\mim,\g>="cdmid"+<0\mim,\g>="d",
(15,0)="a";(0,10)="b" **[bordergrey]\crv{(10,5)&(5,8)},
?(.25)="main3"*\dir{*}*+!UR{_{r_0(x_0)}},?(.6)="main6"*\dir{*}*+!UR{_{r_0(x_1)}},,
"main3",\slice{verylightgrey}{verylightgrey},
"main3";"bot" **\crv{~*\dir{} "ccp"},?(.8)="x0"*\dir{*}*+!U{_{x_0}},
"x0" **\crv{ "ccp"},?(.5)*+!U{_{\sigma_0}},*\dir{>},
"main3"+<0\pinch,.5\pinch>;"x0"+<0\pinch,.5\pinch> **\crv{ "ccp"+<0\pinch,.5\pinch>},?(.5)*\dir{>},
"main6",\slice{verylightgrey}{verylightgrey},
"main6"+"cdmid"-<1.5\mim,0\mim>="x1"*\dir{*}*+!DR{_{x_1}},  
"a"+"cdmid";{"b"+"cdmid"} **\crv{~*\dir{} "cdmid"+(10,5) & "cdmid"+(5,8)},
"a"+"c";{"b"+"c"} **[verylightgrey]\crv{"c"+(10,5) & "c"+(5,8)},
"main6";"x1" **\dir{},?(.5)+/d3\pinch/="mcp",
"main6";"x1" **\crv{ "mcp"},?(.6)*+!U{_{\sigma_1}},*\dir{>},
"main6"+<0\pinch,.5\pinch>;"x1"+<0\pinch,.5\pinch> **\crv{ "mcp"+<0\pinch,.5\pinch>},?(.6)*\dir{>},
"x0";"x1" **\dir{},?(.45)+/r37.5\pinch/="cp1",?(.55)+/l27.5\pinch/="cp2",
"x1" **\crv{ "cp1" & "cp2" },?(.5)*+!LD{_\omega},*\dir{>},
"x0"+<.35\pinch,.35\pinch>;"x1"+<.35\pinch,.35\pinch> **\crv{ "cp1"+<.35\pinch,.35\pinch> & "cp2"+<.35\pinch,.35\pinch> },?(.5)*\dir{>},
"main3";"main6" **\crv{(10.2,4.4)}?(.6)*\dir{>},   
*+!UR{_{r_0\circ\omega}},
"main3"+<.35\pinch,.35\pinch>;"main6"+<.35\pinch,.35\pinch> **\crv{(10.2,4.4)+<.35\pinch,.35\pinch>}?(.6)*\dir{>},   
"a",\slice{bordergrey}{bordergrey},
"b",\slice{verylightgrey}{bordergrey},
"a"+"d";{"b"+"d"} **[bordergrey]\crv{"d"+(10,5) & "d"+(5,8)},
"a"+<10\mim,-6\mim>*{_{X_0\quad\subseteq \quad X}}
\end{xy}
\end{equation*}
\caption{$p\colon Y \to X$ as in Lemma~\ref{lemAdmissibilityViaRetract}}
\label{figAdmissibility}
\end{figure}

\begin{proof}
(See Figure ~\ref{figAdmissibility}.) Assume $p|_{Y_0}$ is admissible.  If
  $\omega$ is a path in $X$ from $x_0$ to $x_1$, then $\omega \cong
  \sigma_0^{-1} \cdot (r_0 \circ \omega) \cdot \sigma_1$ where $\sigma_i(t)
  = r_t(x_i)$ for $i=0$, $1$.  Pick $y_0\in p^{-1}(x_0)$ and let $\eta(t)$
  be a path in $Y_0$ starting at $\tilde r_0(y_0)$ such that $p\circ \eta
  \cong r_0\circ \omega$.  By assumption there exists $y_1\in p^{-1}(x_1)$
  such that $\tilde r_0(y_1)$ is in the same path-component of
  $p^{-1}(r_0(x_1))$ as $\eta(1)$; let $\psi$ be any path in
  $p^{-1}(r_0(x_1))$ from $\eta(1)$ to $\tilde r_0(y_1)$.  Set
  $\tilde\omega = \tilde\sigma_0^{-1} \cdot \eta \cdot \psi \cdot
  \tilde\sigma_1$, where $\tilde \sigma_i(t) = \tilde r_t(y_i)$.  Then
  $p\circ \tilde\omega \cong \sigma_0^{-1} \cdot (r_0 \circ \omega) \cdot
  \sigma_1$ and thus $p$ is admissible.
\end{proof}

Recall the $\G(k)$-equivariant quotient maps $\overline{X}^{BS}
\xrightarrow{p_1} \overline{X}^{RBS} \xrightarrow{p_2}
            {}_{\Q}\overline{X}^{\tau}$ from \S\S\ref{subsectRBSSarith},
            \ref{subsectSatakeSArith}.

\begin{prop}
\label{propSimply}
The spaces $\overline{X}^{RBS}$ and ${}_{\Q}\overline{X}^{\tau}$ are
simply connected.
\end{prop}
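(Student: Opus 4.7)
The plan is to apply Proposition~\ref{propAdmissibilityImpliesSC} twice: once to $p_{1}\colon \overline{X}_\infty^{BS}\to \overline{X}_\infty^{RBS}$ and then to $p_{2}\colon \overline{X}_\infty^{RBS}\to {}_{\Q}\overline{X}_\infty^{\tau}$. Since the Bruhat--Tits buildings $X_v$ for $v\in S_f$ are contractible, the K\"unneth-type splitting $\pi_1(A\times B)=\pi_1(A)\times \pi_1(B)$ reduces the proposition to showing that $\overline{X}_\infty^{RBS}$ and ${}_{\Q}\overline{X}_\infty^{\tau}$ are simply connected. The source $\overline{X}_\infty^{BS}$ is contractible (hence simply connected), so the hypotheses of Proposition~\ref{propAdmissibilityImpliesSC} for $p_1$ reduce to admissibility of $p_1$ together with path-connectedness of one fiber; the fiber over an interior point is a singleton, while the fiber over $z\in X_P$ is homeomorphic to $N_P\cong \R^{\dim N_P}$ and hence connected.

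To verify admissibility of $p_1$, I would use Lemma~\ref{lemAdmissibilityIsLocal} to reduce to a local check, and then Lemma~\ref{lemAdmissibilityViaRetract} piece by piece. Near a boundary point $z\in X_P$, a basic open neighborhood has the form $U=p_1(\overline A_P(s)\times N_P\times O)$ with $O$ a small relatively compact neighborhood of $z$ in $X_P$, and the preimage $Y=p_1^{-1}(U)$ is the corner subset $\overline A_P(s)\times N_P\times O$ sitting inside $X_\infty(P)$. With $X_0=O\subset U$ and $Y_0=\{o_P\}\times N_P\times O=p_1^{-1}(O)$, the ``radial'' retraction $\tilde r_t(a,u,y)=(t\cdot a,u,y)$ in the coordinates $\overline A_P\cong [0,s^{-1})^{\Delta_P}$ gives a deformation retraction of $Y$ onto $Y_0$ that descends through $p_1$ to a deformation retraction $r_t$ of $U$ onto $O$. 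The restricted map $p_1|_{Y_0}\colon N_P\times O\to O$ is just projection onto the second factor, a trivial bundle with contractible fiber, and is patently admissible. The $\pi_0$-surjectivity hypothesis of Lemma~\ref{lemAdmissibilityViaRetract} is automatic because every fiber of $p_1$ is connected. Together these verify the hypotheses of Lemma~\ref{lemAdmissibilityViaRetract}, and covering $\overline{X}_\infty^{RBS}$ by such neighborhoods (together with interior open sets on which $p_1$ is a homeomorphism) establishes admissibility.

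The argument for ${}_{\Q}\overline{X}_\infty^{\tau}$ runs in exactly the same spirit. The source $\overline{X}_\infty^{RBS}$ of $p_2$ is simply connected by the first part, and the fibers of $p_2$ over $z\in X_{P^\dag,\tau}$ are the reductive Borel--Serre compactifications of $W_{P^\dag,\tau}$, which are continuous images of the connected spaces $W_{P^\dag,\tau}$ and hence path-connected. To prove admissibility of $p_2$, I would again work locally: near $z\in X_{P^\dag,\tau}$, one takes a neighborhood arising as the image of a corner neighborhood, retracts the $\overline A$-directions as above so as to land inside $p_2^{-1}(O')$ for $O'$ a neighborhood of $z$ in $X_{P^\dag,\tau}$, and is reduced via the decomposition \eqref{eqnBoundaryDecomposition} to the admissibility of the projection $X_{P^\dag,\tau}\times W_{P^\dag,\tau}\to X_{P^\dag,\tau}$, whose fibers are contractible symmetric spaces.

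The main obstacle I expect is bookkeeping rather than conceptual: the topologies on $\overline{X}_\infty^{RBS}$ and ${}_{\Q}\overline{X}_\infty^{\tau}$ are only indirectly defined as quotient topologies, so one must carefully identify basic open neighborhoods, check that the chosen corner set $\overline A_P(s)\times N_P\times O$ is saturated under $p_1$ (and the analogous set under $p_2$), and verify the intertwining $p\circ\tilde r_t=r_t\circ p$ needed to invoke Lemma~\ref{lemAdmissibilityViaRetract}. For $p_2$ there is the additional subtlety that different parabolic $\Q$-subgroups can yield the same rational boundary component, so one should restrict attention to the maximal (that is, $\tau$-saturated) such parabolic $P^\dag$ when describing local neighborhoods. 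Once this local model is pinned down, the corner decomposition provides precisely the right deformation, and the proof runs cleanly.
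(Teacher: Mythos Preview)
Your treatment of $\overline{X}^{RBS}_\infty$ is essentially the paper's: reduce to $S_f=\emptyset$, cover by corner-type neighborhoods, retract the $\overline A_P$-coordinate, and invoke Lemma~\ref{lemAdmissibilityViaRetract} and Proposition~\ref{propAdmissibilityImpliesSC}. (Your linear retraction $t\cdot a$ works just as well as the paper's $\exp(\tfrac{1}{t}\log a)$, and taking $O=X_P$ rather than a small $O$ makes the cover simpler, but these are cosmetic.)

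For ${}_{\Q}\overline{X}^{\tau}_\infty$ your plan departs from the paper and has a real gap. You propose to show $p_2$ is admissible by ``retracting the $\overline A$-directions'' in a corner neighborhood and reducing via \eqref{eqnBoundaryDecomposition} to the projection $X_{P^\dag,\tau}\times W_{P^\dag,\tau}\to X_{P^\dag,\tau}$. But the fiber $p_2^{-1}(z)$ over $z\in X_{Q,\tau}$ (with $Q=P^\dag$) is $\overline{W_{Q,\tau}}^{RBS}$, not $W_{Q,\tau}$: it meets every stratum $X_P$ of $\overline{X}^{RBS}_\infty$ with $\P\subseteq\mathbf Q$ and $\P^\dag=\mathbf Q$. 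A single $Q$-corner does not give a neighborhood of this set, so Lemma~\ref{lemAdmissibilityViaRetract} cannot be applied with the $Y_0$ you describe (you need $Y_0=p_2^{-1}(X_0)$, not just its open stratum). The paper avoids this by working with the composite $p_2\circ p_1$ from $\overline{X}^{BS}_\infty$ and invoking the tilings theorem \cite{sap1}*{Theorem~8.1}, which supplies a genuine product neighborhood $\overline A_Q(1)\times \overline{e(Q)}$ of the \emph{closed} stratum $\overline{e(Q)}$ in $\overline{X}^{BS}_\infty$; the paper explicitly warns that this decomposition does \emph{not} coincide with the corner decomposition \eqref{Pcorner}. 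Restricting to the open subset $Z_Q\times X_{Q,\tau}\subset \overline{e(Q)}$ (where $Z_Q=\coprod_{\P^\dag=\mathbf Q} N_P\times W_{P,\tau}$ is the honest $(p_2\circ p_1)$-fiber) then makes the retraction argument go through. Your sketch would need either this tilings input or an analogous product-neighborhood statement inside $\overline{X}^{RBS}_\infty$; without it the local model you describe is not the correct one.
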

\begin{proof}
For any finite place $v$, the building $X_{v}$ is contractible. So we need
only prove that $\overline{X}^{RBS}_\infty$ and
${}_{\Q}\overline{X}^{\tau}_\infty$ are simply connected (the case that
$S_{f} = \emptyset$).  By Proposition ~\ref{propAdmissibilityImpliesSC},
Lemma ~\ref{lemAdmissibilityIsLocal}, and the fact that
$\overline{X}^{BS}_\infty$ is contractible, it suffices to find a cover of
$\overline{X}^{RBS}_\infty$ by open subsets $U$ over which $p_1$
(resp. $p_2\circ p_1$) is admissible.

Consider first $\overline{X}^{RBS}_\infty$.  The inverse image
$p_1^{-1}(X_Q)$ of a stratum $X_Q \subseteq \overline{X}^{RBS}_\infty$ is
$e(Q) = N_Q\times X_Q \subseteq \overline{X}^{BS}_\infty$.  Set $\tilde U =
\overline A_Q(1) \times N_Q \times X_Q \subseteq \overline{X}^{BS}_\infty$
(compare \eqref{Pcorner}) and  $U=p_1(\tilde U)$,  a neighborhood of $X_Q$; note $p_1^{-1}(U)= \tilde
U$.  Define a deformation retraction of 
$\tilde U$ onto
$e(Q)$ by
\begin{equation*}
\tilde r_t(a,u,z) =
\begin{cases}
(\exp(\frac{1}{t}\log a), u, z) & \text{for $t\in (0,1]$,} \\
(o_Q,                     u, z) & \text{for $t=0$.}
\end{cases}
\end{equation*}
This descends to a deformation retraction $r_t$ of $U$ onto $X_Q$.  Since
$p_1|_{e(Q)}\colon N_Q\times X_Q \to X_Q$ is admissible and $N_Q$ is
path-connected, Lemma ~\ref{lemAdmissibilityViaRetract} shows that
$p_1|_{\tilde U}$ is admissible.

Now consider ${}_{\Q}\overline{X}^{\tau}_\infty$ and a stratum $X_{Q,\tau}$,
where $\mathbf Q$ is $\tau$-saturated.  The inverse image $(p_2\circ
p_1)^{-1}(X_{Q,\tau})$ is $\coprod_{\P^\dag = \mathbf Q} e(P) \subseteq
\overline{X}^{BS}_\infty$; it is an open subset of the closed stratum
$\overline{e(Q)} = \coprod_{\P \subseteq \mathbf Q} e(P)$.  For each $\P$
such that $\P^\dag = \mathbf Q$ (see \eqref{Pdagger}), we can write $e(P) = N_P\times X_P = N_P
\times X_{Q,\tau} \times W_{P,\tau}$ by \eqref{eqnBoundaryDecomposition}.
Thus $(p_2\circ p_1)^{-1}(X_{Q,\tau}) = Z_Q\times X_{Q,\tau}$, where $Z_Q =
\coprod_{\P^\dag = \mathbf Q} ( N_P \times W_{P,\tau})$.  Note that
$N_Q\times W_{Q,\tau}$ is dense in $Z_Q$, so $Z_Q$ is path-connected.

For $X_{Q,\tau}\subset {}_{\Q}\overline{X}^{\tau}_\infty$, the construction
of $\tilde U$ is more subtle than in the case of
$\overline{X}^{RBS}_\infty$.  The theory of tilings \cite{sap1}*{Theorem
  ~8.1} describes a neighborhood in $\overline{X}^{BS}_\infty$ of the
closed stratum $\overline{e(Q)}$ which is piecewise-analytically
diffeomorphic to $\overline A_Q(1)\times \overline{e(Q)}$.  (Note however
that the induced decomposition on the part of this neighborhood in
$X_\infty(Q)$ does \emph{not} in general agree with that of
\eqref{Pcorner} - see  [33, \S8, Remark (1)].\,)  We thus obtain a neighborhood $\tilde U$ of $(p_2\circ
p_1)^{-1}(X_{Q,\tau}) = Z_Q \times X_{Q,\tau}$ in $\overline{X}^{BS}_\infty$ and a
piecewise-analytic diffeomorphism $\tilde U \cong \overline A_Q(1)\times
Z_Q \times X_{Q,\tau}$; let $U = p_2\circ p_1(\tilde U)$ and note 
$(p_2\circ p_1)^{-1}(U) = \tilde U$.  Since $Z_Q$ is
path-connected, we proceed as in the $\overline{X}^{RBS}_\infty$ case.
\end{proof}

\begin{rem}
It is proved in \cite{ji2} that every Satake compactification
$\overline{X}^{\tau}_\infty$ of a symmetric space $X_\infty$ is a topological ball
and hence contractible. Though the partial Satake compactification
${}_{\Q}\overline{X}^{\tau}_\infty$ is contained in
$\overline{X}^{\tau}_\infty$ as a subset, their topologies are different and
this inclusion is not a topological embedding.  Hence, it does not follow
that ${}_{\Q}\overline{X}^{\tau}_\infty$ is contractible or that a path in
${}_{\Q}\overline{X}^{\tau}_\infty$ can be retracted into the interior.
It can be shown, however, that ${}_{\Q}\overline{X}^{\tau}_\infty$ is weakly
contractible, i.e. that all of its homotopy groups are trivial.
\end{rem}

\begin{prop}\label{propAdmissibleNeatCase}
For any neat $S$-arithmetic subgroup $\ga$, the action of $\ga$ on
$\overline{X}^{RBS}$ and on ${}_{\Q}\overline{X}^{\tau}$ is admissible.
\end{prop}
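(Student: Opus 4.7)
The strategy is to verify admissibility locally via Lemma~\ref{lemAdmissibilityIsLocal} and to reduce from a neighborhood of a boundary stratum to the stratum itself using Lemma~\ref{lemAdmissibilityViaRetract}, exactly parallel to the simply-connectedness argument in Proposition~\ref{propSimply}. For a point in the image of the open interior $X$, the $\ga$-action is proper and, by neatness, has only trivial stabilizers; the quotient map is then a local homeomorphism, and admissibility is immediate.

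For a point in the image of a boundary stratum indexed by a parabolic $k$-subgroup $\P$, I would pick a representative $y$ in that stratum and construct a $\ga_P$-invariant open neighborhood $\tilde V$ enjoying the separation property $\gamma\tilde V\cap\tilde V\ne\emptyset$ iff $\gamma\in\ga_P$, exactly as in the proof of Lemma~\ref{lemRBSDiscontinuous} (and its Satake analogue Lemma~\ref{lemSatakeDiscontinuous}): at each finite place take an open star about the face containing $y_v$, and at infinity shrink the $\overline A_P$-coordinate in the horospherical (or tiling) decomposition. Then $q|_{\tilde V}$ factors as $\tilde V\to\ga_P\backslash\tilde V\hookrightarrow\ga\backslash\oX^{RBS}$ with the second map an open embedding, so admissibility over $q(\tilde V)$ reduces to admissibility of the first map. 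Now apply Lemma~\ref{lemAdmissibilityViaRetract} with the $\ga_P$-equivariant deformation retraction $\tilde r_t$ of $\tilde V$ onto its boundary slice $\tilde V_0 = \tilde V\cap(X_P\times\prod_{v\in S_f}X_v)$ from the proof of Proposition~\ref{propSimply}. The $\pi_0$-fiber condition is automatic, because $\tilde r_0$ is $\ga_P$-equivariant and thus sends each fiber of the $\ga_P$-quotient surjectively onto its image fiber.

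Admissibility near $y$ is thereby reduced to admissibility of $\tilde V_0 \to \ga_P\backslash \tilde V_0$. By Lemma~\ref{lemStabilizersRBS} (resp.\ Lemma~\ref{lemStabilizersSatake}) the subgroup $\ga_{N_P}$ (resp.\ $\ga_{P_\tau}$) acts trivially on the first factor, and by Lemma~\ref{lemUnipotentsHaveFixedPoints} its elements fix points in each $X_v$; by further shrinking the finite-place coordinates of $\tilde V_0$ to lie in the common fixed locus of $\ga_{N_P}\cap\bigcap_v \G(k_v)_{y_v}$, the effective action on the shrunken slice is by the neat arithmetic group $\ga_{L_P}$ (resp.\ $\ga_{H_{P,\tau}}$), which acts properly and thus with trivial stabilizers, giving a local homeomorphism and the required admissibility. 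The main obstacle is the Satake case: the neighborhood $\tilde V$ must be built using \cite{sap1}*{Theorem~8.1} with more care than in the RBS horospherical setting, and moreover the fibers of the retraction to $X_{P,\tau}$ involve the full $W_{P,\tau}$ factor of \eqref{eqnBoundaryDecomposition} rather than a single point, so one must check path-connectedness of these fibers in order for Lemma~\ref{lemAdmissibilityViaRetract} to apply --- this one obtains from Proposition~\ref{propSimply} applied to the smaller group $\widetilde{\mathbf H}_{\P,\tau}$ of \eqref{eqnSatakeLeviDecomposition}.
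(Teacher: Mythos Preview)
Your overall shape is right --- localize, retract to the boundary stratum via Lemma~\ref{lemAdmissibilityViaRetract}, and use neatness to get a free action at the bottom --- and for $S_f=\emptyset$ this is essentially the paper's argument. The gaps are entirely in how you treat the finite places.

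First, the neighborhood $\tilde V$ you build (open stars $V(F_v)$ at each $v\in S_f$ times a horospherical tube at infinity) is \emph{not} $\ga_P$-invariant, and the separation group is not $\ga_P$ but $\ga':=\ga_P\cap\bigcap_{v\in S_f}\ga_{F_v}$, an $S_\infty$-arithmetic subgroup of $\P(k)$. This matters: the archimedean retraction scaling the $\overline A_P$-coordinate is $\ga'$-equivariant because an \emph{arithmetic} subgroup of $\P$ has trivial $A_P$-component, but it is not $\ga_P$-equivariant --- e.g.\ $\operatorname{diag}(p,p^{-1})\in\SL_2(\Z[1/p])$ moves the $A_P$-coordinate. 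Second, after retracting to $\tilde V_0\subset X_P\times\prod_v V(F_v)$ you still have a nontrivial $\ga'$-action on the building factors. ``Shrinking the finite-place coordinates to the common fixed locus'' does not give an open set: the fixed locus in $X_v$ is the face $F_v$, not a neighborhood of it. What is actually needed is a \emph{second} application of Lemma~\ref{lemAdmissibilityViaRetract}, retracting each $V(F_v)$ onto $F_v$ (this is $\ga'$-equivariant since $\ga_{F_v}$ fixes $F_v$ pointwise); only then does one reach $O_P\times\prod_v F_v$, where $\ga'$ acts trivially on the second factor and, via $\ga'/(\ga'\cap N_P)\hookrightarrow\ga_{L_P}$, freely on the first.

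The paper packages precisely this two-stage retraction as an induction on $|S_f|$: the inductive step peels off one $v\in S_f$ by retracting $V(F)\to F$ and invoking the result for the neat $(S\setminus\{v\})$-arithmetic group $\ga_F$; the base case $S_f=\emptyset$ is the archimedean argument. Finally, your worry about path-connectedness of $W_{P,\tau}$-fibers in the Satake case is misplaced: for a quotient map $Y\to\ga\backslash Y$ the fibers in Lemma~\ref{lemAdmissibilityViaRetract} are $\ga$-orbits (discrete), and $\pi_0$-surjectivity is immediate from equivariance of $\tilde r_0$, just as you correctly argued for RBS.
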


\begin{proof}
Let $Y = \overline{X}^{RBS}$ or ${}_{\Q}\overline{X}^{\tau}$ and let
$p\colon Y \to \ga\backslash Y$ be the quotient map, which in this case is
open.  It suffices to find for any point $x\in Y$ an open neighborhood $U$
such that $p|_U$ is admissible.  For then $p|_{\ga U}$
is admissible and hence, by Lemma ~\ref{lemAdmissibilityIsLocal}, $p$ is
admissible.

We proceed by induction on $\vert S_{f}\vert$ and we suppose first that
$S_{f}=\emptyset$.

Suppose $x$ belongs to the stratum $X_Q$ of $\overline{X}^{RBS}_\infty$.  Since
$\ga$ is neat, $\ga_{L_Q}$ is torsion-free.  Thus we can choose a
relatively compact neighborhood $O_Q$ of $x$ in $X_Q$ so that
$p|_{O_Q}\colon O_Q \to p(O_Q)$ is a homeomorphism. Let $U =
p_1(\overline A_Q(s) \times N_Q \times O_Q) \subseteq \overline{X}^{RBS}_\infty$
where $s>0$; this is a smaller version of the set $U$ constructed in the
proof of Proposition ~\ref{propSimply}.  By reduction theory, we can choose
$s$ sufficiently large so that the identifications induced by $\ga$ on $U$
agree with those induced by $\ga_Q$ \cite{Zu3}*{(1.5)}.  Since $\ga_Q
\subseteq N_Q \widetilde M_Q $, it acts only on the last two factors of
$\overline A_Q \times N_Q \times X_Q$.  Thus the deformation retraction
$r_t$ of $U$ onto $O_Q$ (from the proof of Proposition ~\ref{propSimply})
descends to a deformation retraction of $p(U)$ onto $p(O_Q)=O_Q$.
Now apply Lemma ~\ref{lemAdmissibilityViaRetract} to see that $p|_U$ is
admissible.

For $x$ in the stratum $X_{Q,\tau}$ of ${}_{\Q}\overline{X}^{\tau}_\infty$, we
again emulate the construction of $U$ from the proof of Proposition
~\ref{propSimply}.  Specifically let $U= (p_2\circ p_1)(\overline
A_Q(s)\times Z_Q \times O_{Q,\tau})$ where $O_{Q,\tau}$ is a relatively
compact neighborhood of $x$ in $X_{Q,\tau}$ such that
$p|_{O_{Q,\tau}}\colon O_{Q,\tau} \to p(O_{Q,\tau})$ is a homeomorphism;
such a $O_{Q,\tau}$ exists since $\ga_{H_{Q,\tau}}$ is neat and hence
torsion-free.  By \cite{sap1}*{Theorem ~8.1}, the identifications induced
by $\ga$ on $U$ agree with those induced by $\ga_Q$ and these are
independent of the $\overline A_Q(s)$ coordinate.  Thus the deformation
retraction $r_t$ descends to $p(U)$ and we proceed as above.

Now suppose that $\vert S_{f}\vert > 0$, pick $v \in S_{f}$ and let $S' = S \setminus\{v\}$.  We
consider $Y = \overline{X}^{RBS}$ which we write as
$\overline{X}^{RBS}_{S'}\times X_{v}$; the case  $Y =
{}_{\Q}\overline{X}^{\tau}$ is identical.  Following \cite{BS2}*{(6.8)}, for
each face $F$ of $X_{v}$ let $x_{F}$ be the barycenter of $F$ and let
$V(F)$ be the open star of $x_{F}$ in the barycentric subdivision of
$X_{v}$. The sets $V(F)$ form an open cover of $X_{v}$.  For any $\gamma
\in \Gamma$, $\gamma V(F) = V(\gamma F)$. If $F_{1} \neq F_{2}$ are two
faces with $\dim F_{1} = \dim F_{2}$, then $V(F_{1}) \cap V(F_{2}) =
\emptyset$.  It follows that
\begin{equation*}
\gamma V(F)\cap V(F) \neq \emptyset \quad \Longleftrightarrow \quad
\gamma\in\Gamma_{F}\ ,
\end{equation*}
where $\Gamma_F = \Gamma \cap \G(k_v)_F$.  It follows from
\S\ref{ssectStabilizersBuilding} that $\Gamma_F$ fixes $F$ pointwise (since
$\G(k_v)_F$ does) and is a neat  $S'$-arithmetic subgroup (since $\G(k_v)_F$ is
a compact open subgroup of $\G(k_v)$)

Let $U = \overline{X}^{RBS}_{S'}\times V(F)$ for some open face $F$ of
$X_{v}$.  Define a deformation retraction $r_t$ of $U$ onto
$\overline{X}^{RBS}_{S'}\times F$ by $r_t(w,z) = (w, tz + (1-z)r_F(z))$,
where $r_F(z)$ is the unique point in $F$ which is closest to $z\in V(F)$.
The map $r_t$ is $\Gamma_F$-equivariant since $\Gamma_{F}$ fixes $F$
pointwise and acts by isometries.  So $r_{t}$ descends to a deformation
retraction of $p(U)$ onto $(\Gamma_F\backslash
\overline{X}^{RBS}_{S'})\times F$.  The remaining hypothesis of Lemma
~\ref{lemAdmissibilityViaRetract} is satisfied since $r_0(\gamma w, \gamma
z) = r_0(\gamma w,z)$ for $\gamma \in \Gamma_F$.  Since
$\overline{X}^{RBS}_{S'}\times F \to (\Gamma_F\backslash
\overline{X}^{RBS}_{S'})\times F$ is admissible by induction, the lemma
implies that $p|_U$ is admissible.
\end{proof}

We can now see that Theorem ~\ref{thmMainArithmetic} holds if $\ga$ is neat.
According to Proposition \ref{propSimply}, $\overline{X}^{RBS}$ and ${}_{\Q}\overline{X}^{\tau}$
are simply connected. Proposition \ref{propAdmissibleNeatCase} shows that the actions of $\Gamma$
on these spaces are admissible, and  Propositions ~\ref{propDiscontinuousRBS} and ~\ref{propDiscontinuousSatake},
that they are discontinuous. Therefore Proposition ~\ref{propGrosche}\ref{itemGrosche}
applies to both spaces.

\begin{cor}
\label{corAdmissibleSubgroupNeatCase}
For any neat $S$-arithmetic subgroup $\ga$, the actions of $E\ga$ on
$\overline{X}^{RBS}$ and $E_\tau\ga$ on ${}_{\Q}\overline{X}^{\tau}$ are admissible.
\end{cor}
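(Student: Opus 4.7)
The plan is to show that the natural quotient map $\pi\colon E\ga\backslash\overline{X}^{RBS} \to \ga\backslash\overline{X}^{RBS}$ is a covering map with deck transformation group $\ga/E\ga$; once that is established, admissibility of the $E\ga$-action follows formally from the admissibility of the $\ga$-action (Proposition~\ref{propAdmissibleNeatCase}) together with the homotopy lifting property for covering maps. The same strategy handles the Satake case, with ${}_{\Q}\overline{X}^{\tau}$ and $E_\tau\ga$ replacing $\overline{X}^{RBS}$ and $E\ga$; all of the ingredients have evident analogues there.

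Two algebraic facts feed into the covering property: (a) $E\ga$ is normal in $\ga$, noted immediately before Proposition~\ref{propGammaFixedIsEGamma}, and (b) for neat $\ga$ every point stabilizer $\ga_y$ lies in $E\ga$, which is the equality $\ga_{f,RBS} = E\ga$ of Proposition~\ref{propGammaFixedIsEGamma}. To exhibit an evenly covered neighborhood, I would start from $\bar y \in \ga\backslash\overline{X}^{RBS}$, lift it to $y\in\overline{X}^{RBS}$, and use discontinuity of the $\ga$-action (Proposition~\ref{propDiscontinuousRBS}) to choose a neighborhood $V$ of $y$ with $\gamma V \cap V = \emptyset$ for $\gamma \notin \ga_y$ and $\gamma V = V$ for $\gamma \in \ga_y$. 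Setting $\bar V = p_\ga(V)$, the claim is that $\pi^{-1}(\bar V) = \bigsqcup_{\bar\gamma \in \ga/E\ga} p_{E\ga}(\gamma V)$ with each sheet mapped homeomorphically onto $\bar V$. The only nontrivial point is disjointness of distinct sheets: if $p_{E\ga}(\gamma_1 V) \cap p_{E\ga}(\gamma_2 V) \neq \emptyset$, then $\gamma_1^{-1}\alpha\gamma_2 V \cap V \neq \emptyset$ for some $\alpha \in E\ga$, which by the choice of $V$ forces $\gamma_1^{-1}\alpha\gamma_2 \in \ga_y \subseteq E\ga$ and hence $\gamma_1 E\ga = \gamma_2 E\ga$.

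Granted the covering property, admissibility is quick. Given a path $\omega$ in $E\ga\backslash\overline{X}^{RBS}$ with $\omega(0) = \bar x_0$ and any $y_0 \in p_{E\ga}^{-1}(\bar x_0)$, apply admissibility of the $\ga$-action to the composite $\pi\circ\omega$ to obtain a path $\tilde\omega$ in $\overline{X}^{RBS}$ with $\tilde\omega(0)=y_0$ and $p_\ga\circ\tilde\omega \cong \pi\circ\omega$ rel endpoints. Both $p_{E\ga}\circ\tilde\omega$ and $\omega$ are paths in $E\ga\backslash\overline{X}^{RBS}$ starting at $\bar x_0$ whose images under $\pi$ are homotopic rel endpoints; homotopy lifting together with uniqueness of path lifting for the covering $\pi$ then gives $p_{E\ga}\circ\tilde\omega \cong \omega$ rel endpoints, which is exactly Definition~\ref{defiAdmissible}.

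The main obstacle is the first step, the evenly-covered-neighborhood verification for $\pi$. Its substance is the disjointness calculation, and this is where the containment $\ga_y \subseteq E\ga$ (equivalently, neatness via $E\ga = \ga_{f,RBS}$) is essential: without neatness one only has $E\ga \subseteq \ga_{f,RBS}$, and the argument genuinely breaks. Everything afterwards is formal covering-space theory.
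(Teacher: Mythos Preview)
Your proposal is correct and follows essentially the same route as the paper: both arguments show that $E\ga\backslash\overline{X}^{RBS}\to\ga\backslash\overline{X}^{RBS}$ is a covering map (the paper phrases this as the $\ga/E\ga$-action on $E\ga\backslash\overline{X}^{RBS}$ being free and discontinuous, using Propositions~\ref{propGammaFixedIsEGamma} and~\ref{propDiscontinuousRBS}), and then deduce admissibility of the $E\ga$-action from that of the $\ga$-action via path lifting. Your version is simply more explicit about the evenly-covered neighborhoods and the homotopy-lifting step that the paper compresses into the phrase ``thus $E\ga$ acts admissibly if and only if $\ga$ acts admissibly.''
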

\begin{proof}
By Proposition ~\ref{propGammaFixedIsEGamma} the action of $\ga/E\ga$ on
$E\ga \backslash \overline{X}^{RBS}$ is free and by Proposition
~\ref{propDiscontinuousRBS} it is discontinuous.  It follows that $E\ga
\backslash \overline{X}^{RBS} \to (\ga/E\ga)\backslash (E\ga\backslash
\overline{X}^{RBS}) = \ga \backslash \overline{X}^{RBS}$ is a covering
space (in fact a regular covering space) and thus $E\ga $ acts admissibly
if and only if $\ga$ acts admissibly.
Now apply the proposition.  The case of  ${}_{\Q}\overline{X}^{\tau}$  is
treated similarly.
\end{proof}

\begin{proof}[Proof of Theorem ~\textup{\ref{thmMainArithmetic}}]
Let $\ga'\subseteq \ga$ be a normal neat subgroup of finite index.  The
idea in the general case is to factor $\overline{X}^{RBS}\to \ga\backslash
\overline{X}^{RBS}$ as
\begin{equation*}
\overline{X}^{RBS}\to E\ga'\backslash \overline{X}^{RBS} \to
(\ga/E\ga')\backslash (E\ga'\backslash \overline{X}^{RBS}) = \ga\backslash
\overline{X}^{RBS}
\end{equation*}
and apply  Proposition ~
\ref{propGrosche}\ref{itemGrosche}  to the first map and Proposition ~
\ref{propGrosche}\ref{itemArmstrong}  to the second map.

By Proposition ~\ref{propGammaFixedIsEGamma}, $\ga'_{fix,RBS} = E\ga'$ and
hence $(E\ga')_{fix,RBS} = E\ga'$. Arguing as above, Proposition~\ref{propSimply}
shows that $\overline{X}^{RBS}$ is simply connected and Proposition ~\ref{propDiscontinuousRBS}
that $\ga'$ acts discontinuously. It follows that $E\Gamma'$ acts discontinuously since $E\Gamma'$ contains all stabilizer subgroups $\Gamma'_y$ . Corollary~\ref{corAdmissibleSubgroupNeatCase} shows that
this action is admissible as well. Then Proposition~\ref{propGrosche}\ref{itemGrosche} applies and
proves that $E\ga' \backslash \overline{X}^{RBS}$ is simply connected.

We now claim that $E\ga' \backslash
\overline{X}^{RBS}$ is locally compact.  To see this, note that $E\ga'
\backslash \overline{X}^{BS}$ is locally compact since it is triangulizable
\cite{BS2}*{(6.10)}.  Furthermore the fibers of $p_1'\colon E\ga'
\backslash \overline{X}^{BS} \to E\ga' \backslash \overline{X}^{RBS}$ have
the form $\ga'_{N_P}\backslash N_P$ which are compact.  The claim follows.
Since $\ga'/E\ga'$ acts freely and $\ga/\ga'$ is finite, the action of $\ga/E\ga'$ is
proper. So we can apply Proposition ~\ref{propGrosche}\ref{itemArmstrong} to
$\ga\backslash \overline{X}^{RBS} = (\ga/E\ga' )\backslash (E\ga'
\backslash\overline{X}^{RBS})$ and find that $\pi_1(\ga\backslash
\overline{X}^{RBS}) \cong (\ga/E\ga' ) / (\ga/E\ga' )_{fix,RBS} \cong \ga /
\ga_{fix,RBS}$ as desired.  Furthermore the proof shows that the isomorphism
is induced by the natural morphism $\ga \to \pi_1(\ga\backslash
\overline{X}^{RBS})$.

A similar proof using $E_\tau\ga'$ instead of $E\ga'$ treats the case of
$\ga\backslash {}_{\Q}\overline{X}^{\tau}$; one only needs to observe
that the fibers of $p_2'\colon E_\tau\ga' \backslash \overline{X}^{RBS} \to
E_\tau\ga' \backslash {}_{\Q}\overline{X}^{\tau}$ have the form
$\ga'_{L_{P,\tau}} \backslash \overline{W}_{P,\tau}^{RBS}$ which are
compact.
\end{proof}

\section{Appendix}
\subsection{Computations of the congruence subgroup kernel}
There is an intriguing similarity between our results on the fundamental group 
and computations of the congruence subgroup kernel $C(S,\G)$.

For any nonzero ideal $\La \subseteq \OO$, set
\begin{equation*}
\ga(\La) = \{\,\gamma \in \G(k) \mid  \text{$\rho(\gamma) \in \GL_N(\OO)$, 
$ \rho(\gamma) \equiv I \pmod{\La}$}\,\}\ .
\end{equation*}
A subgroup $\ga \subset \G(k)$ is called an \emph{$S$-congruence subgroup} if it
contains $\ga(\La)$ as a subgroup of finite index for some ideal $\La
\subseteq \OO$. In its simplest form, the {\em congruence subgroup problem} asks whether
every $S$-arithmetic subgroup of $\G(k)$ is an $S$-congruence subgroup.
The congruence subgroup kernel is a quantitative measure of how close this is to being true.
We briefly outline its definition  (see \cite{SerreBourbaki}
and  \cite{Ra1}).

Define a topology ${\mathcal T}_c$ on
$\G(k)$ by taking the set of $S$-congruence subgroups to be a fundamental
system of neighborhoods of $1$.  Similarly define a topology ${\mathcal
  T}_a$ by using the set of $S$-arithmetic subgroups.  Let $\widehat G(c)$
and $\widehat G(a)$ denote the completions of $\G(k)$ in these topologies.
Since every $S$-congruence subgroup is also
$S$-arithmetic, ${\mathcal T}_a$ is in general finer than ${\mathcal T}_c$
and we have a surjective map
\begin{equation*}
\begin{CD}
\widehat G(a) @>>> \widehat G(c) \ .
\end{CD}
\end{equation*}
The kernel of this map is  called the \emph{congruence subgroup kernel} $C(S, \G)$. 

From a more general perspective, the congruence subgroup problem is the
determination of $C(S,\G)$.  The case when $C(S,\G)=1$ is equivalent to
every $S$-arithmetic subgroup being an $S$-congruence subgroup.

Assume that $k\rank \G > 0$ and $S\rank\G \ge 2$.
Under these assumptions, it can be shown that (see \citelist{\cite{Ra1}\cite{Ra2}})
\begin{equation*}
\label{eqnCongruenceKernel}
C(S,\G)\ \cong\ \varprojlim_\La
\ga(\La)/E\ga(\La)\ 
\end{equation*}
and thus, in view of \eqref{eqnFundGp},
\begin{equation}
C(S,\G) = \varprojlim\limits_{\La} \pi_{1}(\ga(\La)\backslash\oX^{RBS})\ .
\end{equation}

Now set $\ga^*(\La) = \bigcap_{\Lb\neq 0} E\ga(\La)\cdot \ga(\Lb)$ where $\Lb$ runs
over nonzero ideals of $\OO$.  Clearly
\begin{equation*}
E\ga(\La) \subseteq \ga^*(\La) \subseteq \ga(\La) 
\end{equation*}
and $E\Gamma(\mathfrak a) = E\Gamma^*(\mathfrak a)$.
By Raghunathan's Main Lemma \cite{Ra1}*{(1.17)}, for every nonzero ideal
$\La$ there exists a nonzero ideal $\La'$ such that $\ga^*(\La)\supseteq
\ga(\La')$.  Thus $\ga^*(\La)$ is the smallest $S$-congruence subgroup
containing $E\ga(\La)$. It follows that

\begin{equation*}
C(S,\G)  \cong
\varprojlim\limits_{\La} \ga^*(\La) / E\ga(\La).
\end{equation*}
Raghunathan's main theorems in \cite{Ra1} and \cite{Ra2} show that
$C(S,\G)$ is finite under the rank assumptions.
So the second limit will stabilize if we know that
\begin{equation*}
  \ga^*(\Lb)/ E\ga(\Lb) \longrightarrow
 \ga^*(\La) / E\ga(\La)
\end{equation*}
is surjective for $\Lb\subset \La$.  This too follows from Raghunathan's
Main Lemma \cite{Ra1}*{(1.17)} applied to $\Lb$ and from the definition of
$\ga^*(\La)$.  Thus 
\begin{equation}
C(S,\G)  \cong
 \ga^*(\La) / E\ga(\La) \cong \pi_1(\ga^*(\La)\backslash\overline{X}^{RBS}) \ ,
\end{equation}
for any sufficiently small nonzero ideal $\La$ of $\OO$.

It would be interesting to have an explanation for these topological interpretations of the congruence subgroup kernel.

\begin{bibdiv}
\begin{biblist}
\bib{Armstrong}{article}{
   author={Armstrong, M. A.},
   title={The fundamental group of the orbit space of a discontinuous group},
   journal={Proc. Cambridge Philos. Soc.},
   volume={64},
   date={1968},
   pages={299--301},
}
\bib{AyoubZucker}{article}{
  author={Ayoub, Joseph},
  author={Zucker, Steven},
  title={Relative Artin motives and the reductive Borel-Serre compactification of a locally symmetric variety},
   journal={Invent. Math.},
   volume={188},
   date={2012},
   number={2},
   pages={277--427},
   issn={0020-9910},

}
\bib{BB}{article}{
   author={Baily, W. L., Jr.},
   author={Borel, A.},
   title={Compactification of arithmetic quotients of bounded symmetric
   domains},
   journal={Ann. of Math. (2)},
   volume={84},
   date={1966},
   pages={442--528},
   issn={0003-486X},
}
\bib{Borel}{book}{
   author={Borel, Armand},
   title={Introduction aux groupes arithm\'etiques},
   series={Actualit\'es Scientifiques et Industrielles, No. 1341},
   publisher={Hermann},
   place={Paris},
   date={1969},
   pages={125},
}
\bib{BorelHarishChandra}{article}{
   author={Borel, Armand},
   author={Harish-Chandra},
   title={Arithmetic subgroups of algebraic groups},
   journal={Ann. of Math. (2)},
   volume={75},
   date={1962},
   pages={485--535},
   issn={0003-486X},
}
\bib{Borel-Ji}{book}{
   author={Borel, Armand},
   author={Ji, Lizhen},
   title={Compactifications of symmetric and locally symmetric spaces},
   series={Mathematics: Theory \& Applications},
   publisher={Birkh\"auser},
   place={Boston},
   date={2006},
   pages={xvi+479},
   isbn={978-0-8176-3247-2},
   isbn={0-8176-3247-6},
}
\bib{Borel-Serre}{article}{
   author={Borel, A.},
   author={Serre, J.-P.},
   title={Corners and arithmetic groups},
   note={Avec un appendice: Arrondissement des vari\'et\'es \`a coins, par
   A. Douady et L. H\'erault},
   journal={Comment. Math. Helv.},
   volume={48},
   date={1973},
   pages={436--491},
   issn={0010-2571},
}

\bib{BS2}{article}{
   author={Borel, A.},
   author={Serre, J.-P.},
   title={Cohomologie d'immeubles et de groupes $S$-arithm\'etiques},
   journal={Topology},
   volume={15},
   date={1976},
   number={3},
   pages={211--232},
   issn={0040-9383},
}
\bib{Bourbaki}{book}{
   author={Bourbaki, N.},
   title={\'El\'ements de math\'ematique. Fasc. XXXIV. Groupes et alg\`ebres
   de Lie. Chapitre IV: Groupes de Coxeter et syst\`emes de Tits. Chapitre
   V: Groupes engendr\'es par des r\'eflexions. Chapitre VI: syst\`emes de
   racines},
   series={Actualit\'es Scientifiques et Industrielles, No. 1337},
   publisher={Hermann},
   place={Paris},
   date={1968},
   pages={288 pp. (loose errata)},
}
\bib{BourbakiTopologiePartOne}{book}{
   author={Bourbaki, N.},
   title={\'El\'ements de math\'ematique. Topologie g\'en\'erale. Chapitres
   1 \`a 4},
   publisher={Hermann},
   place={Paris},
   date={1971},
   pages={xv+357 pp. (not consecutively paged)},
}
\bib{BH}{book}{
   author={Bridson, Martin R.},
   author={Haefliger, Andr{\'e}},
   title={Metric spaces of non-positive curvature},
   series={Grundlehren der Mathematischen Wissenschaften},
   volume={319},
   publisher={Springer-Verlag},
   place={Berlin},
   date={1999},
   pages={xxii+643},
   isbn={3-540-64324-9},
}
\bib{BruhatTits1}{article}{
   author={Bruhat, F.},
   author={Tits, J.},
   title={Groupes r\'eductifs sur un corps local: I. Donn\'ees radicielles valu\'ees},
   journal={Inst. Hautes \'Etudes Sci. Publ. Math.},
   volume={41},
   date={1972},
   pages={5--251},
   issn={0073-8301},
}
\bib{BruhatTits2}{article}{
   author={Bruhat, F.},
   author={Tits, J.},
   title={Groupes r\'eductifs sur un corps local: II. Sch\'emas en groupes.
   Existence d'une donn\'ee radicielle valu\'ee},
   journal={Inst. Hautes \'Etudes Sci. Publ. Math.},
   volume={60},
   date={1984},
   pages={197--376},
   issn={0073-8301},
}
\bib{Cass}{article}{
   author={Casselman, W. A.},
   title={Geometric rationality of Satake compactifications},
   conference={
      title={Algebraic groups and Lie groups},
   },
   book={
      series={Austral. Math. Soc. Lect. Ser.},
      volume={9},
      publisher={Cambridge Univ. Press},
      place={Cambridge},
   },
   date={1997},
   pages={81--103},
}
\bib{ge}{book}{
   author={van der Geer, Gerard},
   title={Hilbert modular surfaces},
   series={Ergebnisse der Mathematik und ihrer Grenzgebiete (3)},
   volume={16},
   publisher={Springer-Verlag},
   place={Berlin},
   date={1988},
   pages={x+291},
   isbn={3-540-17601-2},
}
\bib{GHM}{article}{
   author={Goresky, M.},
   author={Harder, G.},
   author={MacPherson, R.},
   title={Weighted cohomology},
   journal={Invent. Math.},
   volume={116},
   date={1994},
   pages={139--213},
   issn={0020-9910},
}
\bib{Gro}{article}{
   author={Grosche, J{\"u}rgen},
   title={\"Uber die Fundamentalgruppen von Quotientenr\"aumen Siegelscher
   Modulgruppen},
   journal={J. Reine Angew. Math.},
   volume={281},
   date={1976},
   pages={53--79},
   issn={0075-4102},
}
\bib{gro2}{article}{
   author={Grosche, J{\"u}rgen},
   title={\"Uber die Fundamentalgruppen von Quotientenr\"aumen Siegelscher
   und Hilbert-Siegelscher Modulgruppen},
   journal={Nachr. Akad. Wiss. G\"ottingen Math.-Phys. Kl. II},
   date={1976},
   number={9},
   pages={119--142},
   issn={0065-5295},
}
\bib{hk}{article}{
   author={Heidrich, Holger},
   author={Kn{\"o}ller, Friedrich W.},
   title={\"Uber die Fundamentalgruppen Siegelscher Modulvariet\"aten vom
   Grade $2$},
   journal={Manuscripta Math.},
   volume={57},
   date={1987},
   number={3},
   pages={249--262},
   issn={0025-2611},
}
\bib{hs}{article}{
   author={Hulek, K.},
   author={Sankaran, G. K.},
   title={The fundamental group of some Siegel modular threefolds},
   conference={
      title={Abelian varieties},
      address={Egloffstein},
      date={1993},
   },
   book={
      publisher={de Gruyer},
      place={Berlin},
   },
   date={1995},
   pages={141--150},
}
\bib{ji}{article}{
   author={Ji, Lizhen},
   title={Buildings and their applications in geometry and topology},
   journal={Asian J. Math.},
   volume={10},
   date={2006},
   number={1},
   pages={11--80},
   issn={1093-6106},
}
\bib{ji2}{article}{
   author={Ji, Lizhen},
   title={Satake and Martin compactifications of symmetric spaces are
   topological balls},
   journal={Math. Res. Lett.},
   volume={4},
   date={1997},
   number={1},
   pages={79--89},
   issn={1073-2780},
}
\bib{JiMacPherson}{article}{
  author={Ji, L.},
  author={MacPherson, R.},
  title={Geometry of compactifications of locally symmetric spaces},
  journal={Ann. Inst. Fourier (Grenoble)},
  volume={52},
  date={2002},
  number={2},
  pages={457--559},
}

\bib{kn}{article}{
   author={Kn{\"o}ller, F. W.},
   title={Die Fundamentalgruppen der Siegelschen Modulvariet\"aten},
   journal={Abh. Math. Sem. Univ. Hamburg},
   volume={57},
   date={1987},
   pages={203--213},
   issn={0025-5858},
}
\bib{Landvogt}{book}{
   author={Landvogt, Erasmus},
   title={A compactification of the Bruhat-Tits building},
   series={Lecture Notes in Mathematics},
   volume={1619},
   publisher={Springer-Verlag},
   place={Berlin},
   date={1996},
   pages={viii+152},
   isbn={3-540-60427-8},
}
\bib{Macdonald}{article}{
   author={Macdonald, I. G.},
   title={Affine root systems and Dedekind's $\eta $-function},
   journal={Invent. Math.},
   volume={15},
   date={1972},
   pages={91--143},
   issn={0020-9910},
}
\bib{Margulis}{article}{
   author={Margulis, G. A.},
   title={Finiteness of quotient groups of discrete subgroups},
     journal={Funct. Anal. Appl.},
     pages = {178-187},
     volume = {13},
     number = {3},
     date={1979},
     issn = {0016-2663},
}
\bib{Ra1}{article}{
   author={Raghunathan, M. S.},
   title={On the congruence subgroup problem},
   journal={Inst. Hautes \'Etudes Sci. Publ. Math.},
   volume={46},
   date={1976},
   pages={107--161},
   issn={0073-8301},
}
\bib{Ra2}{article}{
   author={Raghunathan, M. S.},
   title={On the congruence subgroup problem. II},
   journal={Invent. Math.},
   volume={85},
   date={1986},
   number={1},
   pages={73--117},
   issn={0020-9910},
}
\bib{RemyThuillierWernerI}{article}{
   author={R{\'e}my, Bertrand},
   author={Thuillier, Amaury},
   author={Werner, Annette},
   title={Bruhat-Tits theory from Berkovich's point of view. I. Realizations
   and compactifications of buildings},
   journal={Ann. Sci. \'Ec. Norm. Sup\'er. (4)},
   volume={43},
   date={2010},
   number={3},
   pages={461--554},
   issn={0012-9593},
}
\bib{RemyThuillierWernerII}{article}{
   author={R{\'e}my, Bertrand},
   author={Thuillier, Amaury},
   author={Werner, Annette},
   title={Bruhat-Tits theory from Berkovich's point of view. II. Satake
   compactifications of buildings},
   date={2009},
   eprint={\tt arXiv:0907.3264 [math.GR]},
}
\bib{san}{article}{
   author={Sankaran, G. K.},
   title={Fundamental group of locally symmetric varieties},
   journal={Manuscripta Math.},
   volume={90},
   date={1996},
   number={1},
   pages={39--48},
   issn={0025-2611},
}
\bib{sap1}{article}{
   author={Saper, Leslie},
   title={Tilings and finite energy retractions of locally symmetric spaces},
   journal={Comment. Math. Helv.},
   volume={72},
   date={1997},
   number={2},
   pages={167--202},
   issn={0010-2571},
}
\bib{sap2}{article}{
   author={Saper, Leslie},
   title={Geometric rationality of equal-rank Satake compactifications},
   journal={Math. Res. Lett.},
   volume={11},
   date={2004},
   number={5},
   pages={653--671},
   issn={1073-2780},
}
\bib{sap3}{article}{
  author={Saper, Leslie},
  title={$\scr L$-modules and the conjecture of Rapoport and
  Goresky-MacPherson},
  note={Automorphic forms. I},
  journal={Ast\'erisque},
  number={298},
  date={2005},
  pages={319--334},
}
\bib{sat1}{article}{
   author={Satake, Ichir{\^o}},
   title={On representations and compactifications of symmetric Riemannian
   spaces},
   journal={Ann. of Math. (2)},
   volume={71},
   date={1960},
   pages={77--110},
   issn={0003-486X},
}
\bib{sat2}{article}{
   author={Satake, Ichir{\c{o}}},
   title={On compactifications of the quotient spaces for arithmetically
   defined discontinuous groups},
   journal={Ann. of Math. (2)},
   volume={72},
   date={1960},
   pages={555--580},
   issn={0003-486X},
}
\bib{SerreBourbaki}{article}{
   author={Serre, Jean-Pierre},
   title={Groupes de congruence (d'apr\`es H. Bass, H. Matsumoto, J.
   Mennicke, J. Milnor, C. Moore)},
   part={Expos\'e 330},
   book={
     title={S\'eminaire Bourbaki},
     subtitle={Volume 1966/1967, Expos\'e 313--330},
     publisher={W. A. Benjamin},
     address={New York},
     date={1968},
   },
   reprint={
     title={S\'eminaire Bourbaki, Vol.\ 10},
     publisher={Soc. Math. France},
     place={Paris},
     date={1995},
     note={pp. 275--291},
   },
}
\bib{SerreTrees}{book}{
   author={Serre, Jean-Pierre},
   title={Trees},
   series={Springer Monographs in Mathematics},
   note={Translated from the French original by John Stillwell;
   Corrected 2nd printing of the 1980 English translation},
   publisher={Springer-Verlag},
   place={Berlin},
   date={2003},
   pages={x+142},
   isbn={3-540-44237-5},
}
\bib{Tits}{article}{
   author={Tits, J.},
   title={Reductive groups over local fields},
   conference={
      title={Automorphic forms, representations and $L$-functions (Proc.
      Sympos. Pure Math., Oregon State Univ., Corvallis, Ore., 1977), Part
      1},
   },
   book={
      series={Proc. Sympos. Pure Math., XXXIII},
      publisher={Amer. Math. Soc.},
      place={Providence, R.I.},
   },
   date={1979},
   pages={29--69},
}
\bib{Werner}{article}{
   author={Werner, Annette},
   title={Compactifications of Bruhat-Tits buildings associated to linear
   representations},
   journal={Proc. Lond. Math. Soc. (3)},
   volume={95},
   date={2007},
   number={2},
   pages={497--518},
   issn={0024-6115},
   doi={10.1112/plms/pdm019},
}
\bib{Zu1}{article}{
   author={Zucker, Steven},
   title={$L\sb{2}$ cohomology of warped products and arithmetic groups},
   journal={Invent. Math.},
   volume={70},
   date={1982},
   number={2},
   pages={169--218},
   issn={0020-9910},
}
\bib{Zu2}{article}{
   author={Zucker, Steven},
   title={Satake compactifications},
   journal={Comment. Math. Helv.},
   volume={58},
   date={1983},
   number={2},
   pages={312--343},
   issn={0010-2571},
}
\bib{Zu3}{article}{
   author={Zucker, Steven},
   title={$L\sb 2$-cohomology and intersection homology of locally symmetric
   varieties, II},
   journal={Compositio Math.},
   volume={59},
   date={1986},
   number={3},
   pages={339--398},
   issn={0010-437X},
}
\end{biblist}
\end{bibdiv}

\end{document}